\DeclareFontFamily{OT1}{wncyr}{\hyphenchar\font45 }
\DeclareFontShape{OT1}{wncyr}{m}{n}{%
   <5> <6> <7> <8> <9> gen * wncyr
   <10> <10.95> <12> <14.4> <17.28> <20.74>  <24.88>wncyr10}{}
\DeclareFontShape{OT1}{wncyr}{m}{it}{%
   <5> <6> <7> <8> <9> gen * wncyi
   <10> <10.95> <12> <14.4> <17.28> <20.74> <24.88> wncyi10}{}
\DeclareFontShape{OT1}{wncyr}{m}{sc}{%
   <5> <6> <7> <8> <9> <10> <10.95> <12> <14.4>
   <17.28> <20.74> <24.88>wncysc10}{}
\DeclareFontShape{OT1}{wncyr}{b}{n}{%
   <5> <6> <7> <8> <9> gen * wncyb
   <10> <10.95> <12> <14.4> <17.28> <20.74> <24.88>wncyb10}{}
\def\rus{\usefont{OT1}{wncyr}{m}{n}\cyracc\fontsize{8}{10pt}\selectfont}
\newtheorem{theorem}{Theorem}[section]
\newtheorem{lemma}[theorem]{Lemma}
\newtheorem{corollary}[theorem]{Corollary}
\newtheorem{question}[theorem]{Question}
\theoremstyle{definition}
\newtheorem{definition}[theorem]{Definition}
\newtheorem{example}[theorem]{Example}
\newtheorem{examples}[theorem]{Examples}
\theoremstyle{remark}
\newtheorem{remark}[theorem]{Remark}
\numberwithin{equation}{section}
\begin{document}

\title[Makar-Limanov, Derksen
invariants and finite automorphism groups]{On the Makar-Limanov,
 Derksen invariants, and\\ finite automorphism groups  of algebraic varieties}

\author{Vladimir  L. Popov}
\address{Steklov Mathematical Institute,
Russian Academy of Sciences, Gubkina 8, Moscow
119991, Russia}
\curraddr{}
\email{popovvl@mi.ras.ru}
\thanks{Supported by
 grants {\rus RFFI
08--01--00095}, {\rus N{SH}--1987.2008.1}, and the
program {\it Contemporary Problems of Theoretical
Mathematics} of the Russian Academy of Sciences, Branch
of Mathematics.}

\thanks{}

\subjclass[2000]{14A10}

\date{January 8, 2010}

\dedicatory{To Peter Russell on the
occasion of his $70$th
birthday}

\begin{abstract}
 A simple method of constructing a big
stock of algebraic varieties with trivial Makar-Limanov
invariant is described, the Derksen invariant of some
varieties is computed, the generalizations of the Makar-Limanov
and Derksen invariants are introduced and discussed, and some
results on the Jordan property of automorphism groups
of algebraic varieties are obtained.
\end{abstract}

\maketitle

\section*{Introduction}
The subject
matter
of this note are automorphism groups of algebraic varieties.

 In Section 1 I discuss the Makar-Limanov and Derksen invariants.
 As is known, they have been first introduced as
the means for distinguishing the Koras-Russell threefolds
from affine spaces. Since then studying
varieties with certain properties of these invariants
(for instance, with trivial Makar-Limanov invariant)
became an independent line of research, see, e.g.,
\cite{Dai}, \cite{Dub}, \cite{FZ}, and
references therein. At the conference\,{\it Affine
Algebraic Geometry},
June 1--5, 2009,
Montreal,
I was surprised to find
that
 a simple general method of constructing a big stock
 of such varieties remained unnoticed by the experts. In
 Section 1
I expand my comment on this point made  after one of
the talks and give the related proofs and some illustrating
examples. Then
I consider the Derksen invariant and show that in many
cases in presence of an algebraic group action it coincides with
the coordinate algebra.~At the end of this section
I introduce and discuss the natural ge\-ne\-ralizations
of the Makar-Limanov and Derksen invariants.
In Section~2 some results on the Jordan property
of automorphism groups of algebraic varieties are obtained.

\vskip 2mm

\subsection*{\it Conventions and notation}\

\vskip 1mm

Below variety means algebraic variety. All varieties are taken over an
algebraically closed field $k$ of characteristic zero.
I use the standard conventions
of \cite{Bo} and \cite{Sp}
and the following notation.

\vskip 1.5mm
\begin{quote}
\begin{list}
{\renewcommand{\makelabel}{\entrylabel}}
{\topsep=1mm
\leftmargin=-6mm\itemindent=0.0mm
\labelwidth=1mm \labelsep=1.3mm }

\item{ } $A^*$ is the group of units of the commutative ring $A$ with identity.

\item{ } ${\bf M}_{n\times m}$ is the affine space of all
$n\times m$-matrices with entries in $k$.

\item{} ${\bf A}\!^1_*$ is the punctured affine line ${\bf A}\!^1\setminus \{0\}$.

    \item{ } ${\bf Z}_{>0}$ is the set of positive integers.

    \item{ }$|M|$ is the number of elements of
the set $M$.

    \item{ } ${\rm Rad}\,G$ is the radical of the linear algebraic group $G$.

\item{ } ${\rm Rad}_uG$ is the unipotent radical of the linear algebraic group $G$.

    \item{ } $(G, G)$ is the commutator subgroup of the group $G$.

 \item{ } $k[X]$ is the $k$-algebra of regular function on the variety~$X$.

\item{ } $k(X)$ is the field of rational function on the irreducible
    variety~$X$.

\item{ }${\rm T}_{x, X}$ is the tangent
space to
the variety
$X$ at the
point $x\in X$.

    \item{ } ${\rm Aut}(X)$ is the automorphism group of
    the variety
    $X$.

\item{ } ${\rm Bir}(X)$ is the group of
birational automorphisms of the irreducible
    variety~$X$.

\end{list}
\end{quote}

Given the varieties $X$ and $Y$ (not necessarily affine),
$k[X]$ and $k[Y]$ are naturally
identified  with the $k$-subalgebras of $k[X\times Y]$.
Recall that then
$k[X\times Y]$ is generated by $k[X]$ and $k[Y]$ and,
moreover,
$k[X\times Y]=k[X]\otimes_k k[Y]$, see \cite{SW}.
If $A$ and $B$ are the $k$-subalgebras of resp. $k[X]$
and $k[Y]$, then the subalgebra of $k[X\times Y]$
generated by $A$ and $B$ is
$A\otimes_k B$.

Below action of an algebraic group on an algebraic
variety means algebraic action. Homomorphism of
algebraic groups means  algebraic homomorphism.

Let $X$ be a variety endowed with
an action of an algebraic group $G$.
Then
the natural homomorphism
$\varphi\colon G\rightarrow  {\rm Aut}(X)$ defined by this action
is called {\it algebraic}
and $\varphi(G)$ is called
the {\it algebraic subgroup} of ${\rm Aut}(X)$. If
$\varphi$ is injective,
$\varphi(G)$ is identified with $G$ by means of $\varphi$.

\vskip 2mm

{\it Acknowledgement.} I am grateful to {\sc I. Dolgachev}, {\sc
Yu.\;Prokhorov}, and {\sc Yu. Zarhin}
for discussions on automorphism groups of surfaces.
 %%and, in particular,
%%drawing
%%my attention to
%%references %%\cite{C},
%%\cite{Do}, \cite{Mar}, .

\section{The Makar--Limanov and Derksen invariants}

\subsection{The Makar--Limanov invariant}\

 Recall that the {\it Makar-Limanov invariant}
 of a variety $X$ is the following $k$-subalgeb\-ra of $k[X]$:
 \begin{equation}\label{MLL}
 {\rm ML}(X):=\bigcap_{H}%%\varphi}
 k[X]^{H}%%{\rm Im}(\varphi)}
 \end{equation}
 where $H$
 in \eqref{MLL}
 runs over the images of all homomorphisms ${\bf G}_a\to{\rm Aut}(X)$.

Below is described a simple method of constructing
varieties whose  Makar-Limanov invariant is
trivial (i.e., equal to $k$). The starting point is

\begin{lemma}\label{GGG} %%Let $\,G$ be a
For every connected linear algebraic group $G$,
the  following are equivalent{\rm:}
\begin{enumerate}
\item[\rm(i)] $G$ has no nontrivial characters{\rm;}
\item[\rm(ii)] $G$ is generated by one-dimensional unipotent subgroups{\rm;}
    \item[\rm(iii)] $G$ is generated by  unipotent elements{\rm;}
    \item[\rm(iv)] ${\rm Rad}\,G= {\rm Rad}_uG$.
\end{enumerate}
\end{lemma}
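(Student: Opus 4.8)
The plan is to prove the four equivalences by establishing a cycle of implications, together with one or two direct links, so that every statement is connected. The most natural cycle is (i) $\Rightarrow$ (iv) $\Rightarrow$ (ii) $\Rightarrow$ (iii) $\Rightarrow$ (i), and I would organize the argument around the structure theory of connected linear algebraic groups: the decomposition $G=\mathrm{Rad}_uG\rtimes L$ with $L$ reductive (a Levi decomposition, available in characteristic zero), and the fact that a connected reductive group is generated by its maximal torus together with the root subgroups of any Borel, or equivalently is the almost-direct product of its derived group (which is semisimple) and the radical torus.

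First I would tackle (i) $\Rightarrow$ (iv). Recall $\mathrm{Rad}_uG$ is the largest connected normal unipotent subgroup, and it is contained in $\mathrm{Rad}\,G$, the largest connected normal solvable subgroup; the quotient $\mathrm{Rad}\,G/\mathrm{Rad}_uG$ is a torus $T$. If this torus were nontrivial, it would admit a nontrivial character, and I would need to promote this to a nontrivial character of all of $G$. The key point is that $\mathrm{Rad}\,G/\mathrm{Rad}_uG$ is central in $G/\mathrm{Rad}_uG$ (the image of the radical in the reductive quotient is its radical torus, which is central), so a character of this central torus extends, after passing through the derived-subgroup structure, to a character of $G$; concretely, $G/(G,G)$ is a group whose identity component is a torus onto which this quotient maps nontrivially. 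Establishing this extension cleanly is where I expect the main friction, since it requires knowing that the radical torus of the reductive quotient maps nontrivially to the abelianization. For the converse direction packaged into the cycle, I would instead close the loop through (iii) $\Rightarrow$ (i): a character $\chi\colon G\to \mathbf{G}_m$ kills every unipotent element because $\mathbf{G}_m$ has no nontrivial unipotent elements, so if $G$ is generated by unipotent elements then $\chi$ is trivial.

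For (iv) $\Rightarrow$ (ii), assuming $\mathrm{Rad}\,G=\mathrm{Rad}_uG$ means the reductive quotient $L\cong G/\mathrm{Rad}_uG$ is semisimple. A connected semisimple group is generated by the one-parameter root subgroups $U_\alpha\cong\mathbf{G}_a$ relative to a maximal torus; meanwhile $\mathrm{Rad}_uG$ is a connected unipotent group, hence has a filtration with $\mathbf{G}_a$-quotients and is itself generated by its one-dimensional unipotent subgroups. Lifting the root subgroups of $L$ through the quotient $G\to L$ (using the Levi section) exhibits $G$ as generated by one-dimensional unipotent subgroups, giving (ii). The implication (ii) $\Rightarrow$ (iii) is immediate, since every one-dimensional unipotent subgroup consists of unipotent elements, so generation by such subgroups is a special case of generation by unipotent elements.

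In summary, the skeleton is the cycle (i) $\Rightarrow$ (iv) $\Rightarrow$ (ii) $\Rightarrow$ (iii) $\Rightarrow$ (i). The routine steps are (ii) $\Rightarrow$ (iii) and (iii) $\Rightarrow$ (i), which are essentially one-line observations. The substantive steps are (iv) $\Rightarrow$ (ii), which rests on the generation of semisimple and of connected unipotent groups by one-dimensional unipotent subgroups, and (i) $\Rightarrow$ (iv), which I expect to be the genuine obstacle: one must argue that if $\mathrm{Rad}\,G\neq\mathrm{Rad}_uG$ then the residual central torus produces an honest character of $G$ and not merely of a subgroup. I would handle this last point by reducing to the abelianization $G/(G,G)$, whose identity component is a torus, and showing the composite $\mathrm{Rad}\,G\to G\to G/(G,G)$ is nontrivial modulo $\mathrm{Rad}_uG$; this isolates the one place where the structure theory does real work.
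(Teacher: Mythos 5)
Your overall architecture is viable and genuinely different in organization from the paper's. The paper does not run a cycle: it introduces the subgroup $G_0$ generated by all one-dimensional unipotent subgroups, notes that in characteristic zero every unipotent element lies in such a subgroup (giving (ii)$\Leftrightarrow$(iii)), then observes that $G_0$ lies in the kernel of every character while every element of $G/G_0$ is semisimple, so $G/G_0$ is a torus --- giving (i)$\Leftrightarrow$(ii) with no Levi decomposition at all --- and only invokes the Levi decomposition $G=\mathrm{Rad}_uG\rtimes L$, with $L=Z^0(L,L)$ and $F:=(L,L)\cap Z^0$ finite, for (i)$\Leftrightarrow$(iv). Your (ii)$\Rightarrow$(iii)$\Rightarrow$(i) are the same one-line observations, and your (iv)$\Rightarrow$(ii) via root subgroups of the semisimple Levi factor is correct, with one caveat: the generation of $\mathrm{Rad}_uG$ by one-dimensional unipotent subgroups does not follow from the existence of a filtration with $\mathbf{G}_a$-quotients; the clean argument is the paper's, namely that in characteristic zero every unipotent element lies in a one-dimensional unipotent subgroup.

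The one genuine flaw is in your (i)$\Rightarrow$(iv). It is false in general that the identity component of $G/(G,G)$ is a torus: $G/(G,G)$ is a connected abelian linear algebraic group, hence a product $T\times V$ of a torus and a vector group, and $V$ need not vanish. For instance, $G=\mathbf{G}_a\times\mathbf{G}_m$ satisfies $\mathrm{Rad}\,G\neq\mathrm{Rad}_uG$ --- precisely the situation your contrapositive argues in --- yet its abelianization is $\mathbf{G}_a\times\mathbf{G}_m$ itself. The repair is standard: either note that characters of $G$ are exactly characters of the torus factor $T$ (any character kills $V$), or better, replace $G/(G,G)$ by $G/\bigl((G,G)\,\mathrm{Rad}_uG\bigr)$, which is the abelianization of the reductive quotient $\bar G:=G/\mathrm{Rad}_uG$ and hence isomorphic to $S/\bigl(S\cap(\bar G,\bar G)\bigr)$, where $S$ is the central radical torus of $\bar G$; this is an honest torus of dimension $\dim S>0$, and pulling back any nontrivial character of it gives the required nontrivial character of $G$. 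The crux you correctly isolated --- that $S$ maps nontrivially to the abelianization --- is exactly the finiteness of $S\cap(\bar G,\bar G)$, which is the same structure-theoretic fact the paper uses in the guise ``$F:=(L,L)\cap Z^0$ is finite.'' With that substitution your cycle closes and the proof is complete.
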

\begin{proof} Let $G_0$ be the subgroup of $G$ generated
by all one-dimensional unipotent subgroups of $G$;
it is normal
and,  by \cite[2.2.7]{Sp}, closed.  Since ${\rm char}\, k=0$,
for every nonidentity unipotent element $u\in G$,
the closure of
$\{u^n\mid n\in \mathbb Z\}$ is
a one-dimensional unipotent subgroup of $G$
(cf.,\,e.g.,\,\cite[Chap.\,3, \S2, no.\,2, Theorem~1]{OV}).
Hence $G_0$ coincides with the subgroup generated by all unipotent element of $G$.
This yields (ii)$\Leftrightarrow$(iii).

 Since homomorphisms of algebraic groups preserve Jordan decompositions,
 $G_0$ is contained in the kernel of every character of $G$ and
every element of the
$G/G_0$ is semisimple. The latter yields that
$G/G_0$ is a torus (cf.,\,e.g.,\,\cite[I.4.6]{Bo}).
Hence $G$ has no nontrivial characters if and only if $G=G_0$.
This proves (i)$\Leftrightarrow$(ii).

 Since ${\rm char}\,k=0$, there is
 a reductive subgroup $L$ in $G$
 such that $G$ is the semidirect product
 of $\,{\rm Rad}_uG$ and $L$ (cf.,\,e.g.\,\cite[Chap.\,6, Sect.\,4]{OV}).
  Let $Z$ and $Z^0$
  %%, and $(L, L)$
  be resp.\;the center of $L$ and the identity
  component of $Z$. %%and the commutator subgroup of $L$. %%and
   Put $H:=(L,L){\rm Rad}_uG$. Then  $Z^0$ is a torus,
   $F:=(L,L)\cap Z^0$ is finite, $L=Z^0(L, L)$, and
   $H$ is connected and normal. Being connected semisimple,
   $(L, L)$ has no nontrivial characters. Hence
    $H$ is generated by unipotent elements. This yields
    $H\subseteq G_0$. As $G/H$ is isomorphic to $Z^0/F$ and
    the latter is a torus, all elements of $G/H$ are
    semisimple. Hence $H=G_0$. Thus, (i) holds if and
    only if $Z^0$ is the identity. Since
    ${\rm Rad}\,G=Z^0{\rm Rad}_uG$, this proves
  (i)$\Leftrightarrow$(iv).
\end{proof}

\begin{corollary}\label{c11}
${\rm ML}(X)=\bigcap_{H\subseteq {\rm Aut}(X)}
 k[X]^H$, where
$H$ runs over all connected linear algebraic subgroups
of ${\rm Aut}(X)$
that have no nontrivial characters.
\end{corollary}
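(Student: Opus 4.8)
The plan is to show both inclusions between the two intersections, using Lemma~\ref{GGG} to bridge between one-parameter unipotent subgroups and the larger class of connected linear algebraic groups without nontrivial characters. Write
\[
{\rm ML}(X)=\bigcap_{H}k[X]^H,\qquad
M(X):=\bigcap_{H\subseteq{\rm Aut}(X)}k[X]^H,
\]
where in the first intersection $H$ runs over images of homomorphisms ${\bf G}_a\to{\rm Aut}(X)$ and in the second over all connected linear algebraic subgroups without nontrivial characters.

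For the inclusion $M(X)\subseteq{\rm ML}(X)$, observe that every image $H$ of a homomorphism ${\bf G}_a\to{\rm Aut}(X)$ is a one-dimensional unipotent subgroup (or trivial), hence is itself a connected linear algebraic subgroup of ${\rm Aut}(X)$ having no nontrivial characters. Thus the family indexed in ${\rm ML}(X)$ is a subfamily of the one indexed in $M(X)$; intersecting over a larger family of subgroups yields a smaller ring of invariants, so $M(X)\subseteq{\rm ML}(X)$. One should be slightly careful here that $H$ is genuinely an algebraic subgroup of ${\rm Aut}(X)$ in the sense of the paper's conventions, i.e.\ the image of an algebraic action; this is exactly the setting in which ${\rm ML}(X)$ is defined, so no new input is needed.

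For the reverse inclusion ${\rm ML}(X)\subseteq M(X)$, I would fix an arbitrary connected linear algebraic subgroup $H\subseteq{\rm Aut}(X)$ with no nontrivial characters and show $k[X]^{{\rm ML}}:= {\rm ML}(X)\subseteq k[X]^H$. By Lemma~\ref{GGG}, condition~(i) gives condition~(ii): $H$ is generated by its one-dimensional unipotent subgroups $U_\alpha$. Each such $U_\alpha\cong{\bf G}_a$ arises as the image of a homomorphism ${\bf G}_a\to{\rm Aut}(X)$, so by the definition of ${\rm ML}(X)$ we have ${\rm ML}(X)\subseteq k[X]^{U_\alpha}$ for every $\alpha$. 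A function fixed by every generating subgroup is fixed by the subgroup they generate, hence ${\rm ML}(X)\subseteq\bigcap_\alpha k[X]^{U_\alpha}=k[X]^H$. Taking the intersection over all admissible $H$ gives ${\rm ML}(X)\subseteq M(X)$, completing the proof.

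The main obstacle, and the one place where the lemma does real work, is the reverse inclusion: a priori the class of connected groups without nontrivial characters is much larger than the class of one-dimensional unipotent groups, so the identity is not formal. The crux is the implication (i)$\Rightarrow$(ii) of Lemma~\ref{GGG}, which lets me reduce $H$-invariance to invariance under a generating family of copies of ${\bf G}_a$; the elementary fact that invariance under a generating set of subgroups implies invariance under the whole group then finishes it. No additional hypotheses (such as affineness of $X$) are required, since the argument only manipulates the index sets of the two intersections.
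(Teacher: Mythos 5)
Your proof is correct and is exactly the argument the paper intends: the corollary is stated without proof immediately after Lemma~\ref{GGG}, and the paper's own use of that lemma (e.g.\ in the proof of Theorem~\ref{MLsubset}) is precisely your bridge --- one direction because images of homomorphisms ${\bf G}_a\to{\rm Aut}(X)$ are themselves connected subgroups without nontrivial characters, the other because (i)$\Rightarrow$(ii) of the lemma reduces $H$-invariance to invariance under a generating family of one-dimensional unipotent subgroups. Your attention to the conventions (that each $U_\alpha$ really is the image of an algebraic ${\bf G}_a$-action, obtained by restricting the $H$-action) is a point the paper leaves implicit but handles the same way.
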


\begin{theorem} \label{MLsubset}
Let $X$ be a variety and let $\,G$ be
a connected linear algebraic subgroup of ${\rm Aut}(X)$ that
has no nontrivial characters. Then
\begin{equation}\label{MLsub=}
{\rm ML}(X)\subseteq k[X]^G.
\end{equation}
\end{theorem}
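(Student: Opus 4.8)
The plan is to deduce the inclusion directly from the definition \eqref{MLL} of ${\rm ML}(X)$, by exhibiting $G$ as generated by subgroups that are themselves images of ${\bf G}_a$-actions, so that membership in ${\rm ML}(X)$ forces invariance under a generating family of $G$.

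First I would apply Lemma \ref{GGG}: as $G$ is connected, linear algebraic, and has no nontrivial characters, the implication (i)$\Rightarrow$(ii) gives that $G$ is generated by its one-dimensional unipotent subgroups $U_i$. Since ${\rm char}\,k=0$, each $U_i$ is isomorphic to ${\bf G}_a$, and because $G$ sits in ${\rm Aut}(X)$ as an algebraic subgroup, i.e.\ arises from an algebraic action, the restriction of that action to $U_i$ is an algebraic ${\bf G}_a$-action. Thus every $U_i$ is the image of a homomorphism ${\bf G}_a\to{\rm Aut}(X)$ of the kind ranged over in \eqref{MLL}, whence ${\rm ML}(X)\subseteq k[X]^{U_i}$ for all $i$. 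The conclusion is then formal: a regular function is $G$-invariant precisely when it is invariant under every member of a generating family of subgroups, so $k[X]^G=\bigcap_i k[X]^{U_i}$, and therefore ${\rm ML}(X)\subseteq\bigcap_i k[X]^{U_i}=k[X]^G$.

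I do not expect a serious obstacle: the statement is essentially a repackaging of Lemma \ref{GGG} together with the elementary fact that invariance under a generating set of subgroups is invariance under the whole group. The only point deserving care is the algebraicity claim in the middle step, namely that restricting the given algebraic $G$-action to the abstract one-dimensional unipotent subgroup $U_i$ yields a genuine algebraic ${\bf G}_a$-action qualifying for the intersection defining ${\rm ML}(X)$; this is immediate once $G$ is viewed, per the conventions, as an algebraic group acting algebraically on $X$. Taken over all such $G$ simultaneously, this inclusion is in fact the nontrivial half of Corollary \ref{c11}, the reverse inclusion there being clear since every image of a ${\bf G}_a$-homomorphism is a connected subgroup without nontrivial characters.
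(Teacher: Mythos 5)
Your proposal is correct and is essentially the paper's own proof: the paper likewise invokes Lemma \ref{GGG} (equivalence of (i) and (ii)) to write $k[X]^G=\bigcap_H k[X]^H$ over the one-parameter unipotent subgroups $H$ of $G$, and then concludes by the definition \eqref{MLL} of ${\rm ML}(X)$. Your version merely spells out the two details the paper leaves implicit (that each such subgroup is the image of an algebraic ${\bf G}_a$-action, and that invariance under a generating family of subgroups gives $G$-invariance), which is fine.
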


\begin{proof}
From Lemma  \ref{GGG} we infer that
$k[X]^G=
 \bigcap_{H} k[X]^H$
 where $H$
 runs over all one-parameter unipotent subgroups of
 $G$.
 This and \eqref{MLL} imply \eqref{MLsub=}.
\end{proof}

\begin{corollary}\label{homomm}
Maintain the notation of Theorem {\rm\ref{MLsubset}}. If $\,G$ has no nontrivial charac\-ters and $k[X]^G=k$, then ${\rm ML}(X)=k$.
\end{corollary}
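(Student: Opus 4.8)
The plan is to read this off directly from Theorem \ref{MLsubset}, so the argument will be essentially one line together with a trivial reverse inclusion. Since we maintain the notation of Theorem \ref{MLsubset}, the group $G$ is a connected linear algebraic subgroup of ${\rm Aut}(X)$ having no nontrivial characters, which is exactly the hypothesis under which that theorem applies. I would therefore first invoke Theorem \ref{MLsubset} to get the inclusion
\begin{equation*}
{\rm ML}(X)\subseteq k[X]^G,
\end{equation*}
and then substitute the hypothesis $k[X]^G=k$ to conclude ${\rm ML}(X)\subseteq k$.

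For the reverse inclusion I would appeal to the defining formula \eqref{MLL}. There ${\rm ML}(X)$ is an intersection of the subalgebras $k[X]^H$, each of which is a $k$-subalgebra of $k[X]$ and hence contains the constants; equivalently, every constant function is fixed by any automorphism of $X$, so $k\subseteq k[X]^H$ for every $H$. Taking the intersection preserves this, giving $k\subseteq {\rm ML}(X)$. Combining the two inclusions yields ${\rm ML}(X)=k$, as desired.

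There is no real obstacle here: the content has already been packaged into Theorem \ref{MLsubset}, and the corollary is just the specialization to the case where the single invariant ring $k[X]^G$ is already trivial. The only point that needs a word of justification is the harmless reverse inclusion $k\subseteq {\rm ML}(X)$, which is immediate from the fact that \eqref{MLL} defines ${\rm ML}(X)$ as an intersection of $k$-subalgebras of $k[X]$.
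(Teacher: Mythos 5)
Your proof is correct and matches the paper's intended argument: the paper states this corollary without a separate proof precisely because it follows immediately from Theorem \ref{MLsubset} via ${\rm ML}(X)\subseteq k[X]^G=k$, with the reverse inclusion $k\subseteq {\rm ML}(X)$ being automatic since ${\rm ML}(X)$ is by definition an intersection of $k$-subalgebras of $k[X]$.
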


 Since there are no nonconstant invariant functions on orbit closures, this yields the following.

\begin{corollary}\label{homom}
Maintain the notation of Theorem {\rm\ref{MLsubset}}. If $\,G$ has no nontrivial charac\-ters and $X$ is the closure of a $G$-orbit, then
${\rm ML}(X)=k$.
\end{corollary}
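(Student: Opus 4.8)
The plan is to reduce everything to Corollary \ref{homomm}, which already carries the analytic content: since both statements share the hypothesis that $G$ has no nontrivial characters, it suffices to check that the extra hypothesis of Corollary \ref{homom} — namely that $X$ is the closure of a single $G$-orbit — forces $k[X]^G=k$, after which ${\rm ML}(X)=k$ follows at once from Corollary \ref{homomm}. So the only thing to prove is the parenthetical remark preceding the statement: there are no nonconstant invariant functions on an orbit closure.

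First I would fix a point $x_0\in X$ whose orbit $\mathcal{O}:=G\cdot x_0$ is dense, so that $X=\overline{\mathcal{O}}$. Because $G$ is connected, $\mathcal{O}$ is irreducible, hence so is $X$; this is what lets me speak of a single value of an invariant function ``along the orbit.'' Now take an arbitrary $f\in k[X]^G$ and set $g:=f-f(x_0)\in k[X]$. By $G$-invariance, $f$ is constant on $\mathcal{O}$ with value $f(x_0)$, so $g$ vanishes on all of $\mathcal{O}$. The zero locus of a regular function is closed, so it contains $\overline{\mathcal{O}}=X$; thus $g\equiv 0$, i.e.\ $f=f(x_0)\in k$. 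This gives $k[X]^G=k$, and applying Corollary \ref{homomm} to this $G$ yields ${\rm ML}(X)=k$.

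I do not anticipate any real obstacle here, as the argument is a routine density statement. The one point deserving a word of care is that $X$ need not be affine, so I deliberately phrase the final step via the closedness of the zero set of $g$ rather than through any affine coordinate-ring manipulation; this keeps the argument valid in the stated generality.
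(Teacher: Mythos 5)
Your proof is correct and matches the paper's own argument: the paper derives this corollary from Corollary \ref{homomm} via the remark that there are no nonconstant invariant functions on orbit closures, which is exactly the density argument you spell out. The only difference is that you fill in the (routine) proof of that remark, which the paper leaves implicit.
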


\begin{corollary} \label{G///H}
Let $\,G$ be a connected algebraic group that has no nontrivial characters. Let $H$ be a reductive subgroup of $G$. Then $G/H$ is an irreducible affine variety
with trivial Makar-Limanov invariant.
\end{corollary}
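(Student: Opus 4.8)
The plan is to establish the three assertions—irreducibility, affineness, and triviality of ${\rm ML}(G/H)$—separately, with the first two classical and the last an immediate application of Corollary~\ref{homom}. Throughout I take $G$ to be a connected linear algebraic group (the affineness of the conclusion forces this reading) and set $X:=G/H$. Since a connected algebraic group is an irreducible variety and $X$ is the image of $G$ under the quotient morphism, $X$ is irreducible.

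For affineness I would exploit the reductivity of $H$. As ${\rm char}\,k=0$, Mostow's theorem furnishes a Levi decomposition $G={\rm Rad}_uG\rtimes L$ with $L$ reductive, and every reductive subgroup of $G$ is conjugate into $L$; since conjugation by an element of $G$ is an automorphism of $G$ carrying $X$ to an isomorphic homogeneous space, I may assume $H\subseteq L$. Writing a coset as $ulH$ with $u\in{\rm Rad}_uG$, $l\in L$, and observing that it is determined by the pair $(u,\,lH)$, I obtain an isomorphism of varieties $X\cong{\rm Rad}_uG\times(L/H)$. The first factor is isomorphic to an affine space and the second is affine by Matsushima's criterion (both $L$ and $H$ being reductive), so $X$ is affine as a product of affine varieties. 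Equivalently, one may invoke directly the general form of Matsushima's criterion for an arbitrary affine algebraic group in characteristic zero.

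It remains to prove ${\rm ML}(X)=k$. Here I would let $G$ act on $X$ by left translations and let $\varphi\colon G\to{\rm Aut}(X)$ be the resulting algebraic homomorphism, with image the algebraic subgroup $\overline G:=\varphi(G)\subseteq{\rm Aut}(X)$. Being a homomorphic image of $G$, the group $\overline G$ is connected and linear, and it has no nontrivial characters: any character of $\overline G$ pulls back along the surjection $G\to\overline G$ to a character of $G$, which is trivial, forcing the original one to be trivial. The left-translation action is transitive, so $X$ is a single $\overline G$-orbit, hence in particular the closure of a $\overline G$-orbit. Corollary~\ref{homom}, applied to $\overline G$, then yields ${\rm ML}(X)=k$.

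The genuinely nontrivial ingredient, and thus the main obstacle to a fully self-contained account, is the affineness of $G/H$, i.e.\ Matsushima's criterion, which I would cite rather than reprove; the remaining steps—the variety isomorphism $X\cong{\rm Rad}_uG\times(L/H)$ and the descent of character-freeness to $\overline G$—are routine.
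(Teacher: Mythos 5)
Your proposal is correct and takes essentially the same route as the paper: triviality of ${\rm ML}(G/H)$ is obtained exactly as there, by applying Corollary~\ref{homom} to the (image in ${\rm Aut}(G/H)$ of the) transitive left-translation action, while affineness rests on Matsushima's criterion, which the paper simply cites as \cite[Theorem 6.8]{Bo} and \cite[Theorem 4.9]{PV2}. Your Levi-decomposition argument $G/H\cong {\rm Rad}_u G\times (L/H)$ only rederives the general-group form of that criterion from the reductive case---an elaboration you yourself note is equivalent to citing it directly.
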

\begin{proof} As $\,G$ acts on $G/H$ transitively,
Corollary \ref{homom} yields ${\rm ML}(G/H)=k$.
By \cite[Theo\-rem 6.8]{Bo} and \cite[The\-orem 4.9]{PV2}
reductivity of $H$ implies that $G/H$ is affine.
\end{proof}

The following
generalizes Corollary \ref{homom}.

\begin{theorem}\label{prop}
Let $X$ be a variety endowed with an action of a connected linear algebraic group $\,G$.  Let
$\,d$ be the dimension of the center of $\,G/{\rm Rad}_uG$. If
$X$ contains a dense $G$-orbit, then
\begin{equation}\label{center}
{\rm tr\,deg}_k {\rm ML}(X)\leqslant d.
\end{equation}
\end{theorem}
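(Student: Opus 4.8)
The plan is to reduce the assertion to a transcendence-degree estimate for the invariants of the subgroup generated by the one-parameter unipotent subgroups, and then to exploit the dense orbit together with the structure theory already used in the proof of Lemma~\ref{GGG}.

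First I would retain the notation of that proof: let $G_0$ be the closed connected normal subgroup of $G$ generated by all one-dimensional unipotent subgroups, fix a Levi decomposition $G=\mathrm{Rad}_uG\rtimes L$, and let $Z^0$ be the identity component of the center of $L$. The proof of Lemma~\ref{GGG} shows that $Z^0$ is a torus, that $G=G_0Z^0$, and that $G/G_0\cong Z^0/F$ with $F$ finite; hence $\dim(G/G_0)=\dim Z^0=\dim Z(L)$. Since $G/\mathrm{Rad}_uG\cong L$, this last number is exactly $d$. As $\varphi(G_0)$ is a connected linear algebraic subgroup of $\mathrm{Aut}(X)$ without nontrivial characters (any character of $\varphi(G_0)$ pulls back to one of $G_0$, hence is trivial), Theorem~\ref{MLsubset} gives $\mathrm{ML}(X)\subseteq k[X]^{G_0}$, so it suffices to prove $\mathrm{tr\,deg}_k k[X]^{G_0}\leqslant d$.

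Next I would pass to the dense orbit $O=Gx_0$. It is irreducible and dense in $X$, so $X$ is irreducible and restriction of functions is an injection $k[X]\hookrightarrow k(O)$ carrying $k[X]^{G_0}$ into the field $k(O)^{G_0}$; thus $\mathrm{tr\,deg}_k k[X]^{G_0}\leqslant \mathrm{tr\,deg}_k k(O)^{G_0}$, and everything comes down to the inequality $\mathrm{tr\,deg}_k k(O)^{G_0}\leqslant d$. Here is the heart of the matter. Writing $g=g_0z$ with $g_0\in G_0$, $z\in Z^0$ and using that $Z^0$ is central, one gets $G_0(gx_0)=z\,(G_0x_0)$, so that every $G_0$-orbit in $O$ is a $Z^0$-translate of $O_0:=G_0x_0$ and all these orbits share a common dimension $m:=\dim O_0$. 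Moreover the surjection $Z^0\times O_0\to O$, $(w,p)\mapsto wp$, forces $\dim O\leqslant \dim Z^0+m=d+m$.

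It remains to combine these facts with a theorem of Rosenlicht, according to which $\mathrm{tr\,deg}_k k(O)^{G_0}$ equals $\dim O$ minus the dimension of a generic $G_0$-orbit, i.e.\ $\dim O-m\leqslant d$. Chaining the inequalities yields $\mathrm{tr\,deg}_k\mathrm{ML}(X)\leqslant d$, which is \eqref{center}. I expect the orbit computation of the third paragraph---showing that every $G_0$-orbit is a single $Z^0$-translate of $O_0$, so that the generic orbit dimension is under control---to be the step demanding the most care; after that the dimension bookkeeping and the appeal to Rosenlicht's theorem are routine.
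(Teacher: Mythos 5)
Your proposal is correct, but its second half takes a genuinely different route from the paper's. The two arguments coincide up to the reduction step: both work with the Levi decomposition and the subgroup $G_0=(L,L)\,{\rm Rad}_u G$ from the proof of Lemma~\ref{GGG} (the paper calls it $H$), both identify $\dim G/G_0=\dim Z^0=d$, and both apply Theorem~\ref{MLsubset} to get ${\rm ML}(X)\subseteq k[X]^{G_0}$. After that, the paper never looks at orbit geometry: it dualizes the orbit map $G\to X$, $g\mapsto g\cdot x$, into a $G$-equivariant embedding $k[X]\hookrightarrow k[G]$ (with $G$ acting on itself by left translations), so that ${\rm ML}(X)\subseteq k[X]^{G_0}\hookrightarrow k[G]^{G_0}\cong k[G/G_0]$, and ${\rm tr\,deg}_k\,k[G/G_0]=\dim G/G_0=d$. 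You instead restrict invariants to the dense orbit $O$, prove that all $G_0$-orbits in $O$ are translates of $O_0=G_0x_0$, deduce $\dim O\leqslant d+\dim O_0$ from the surjection $Z^0\times O_0\to O$, and then call on Rosenlicht's theorem to compute ${\rm tr\,deg}_k\,k(O)^{G_0}=\dim O-\dim O_0\leqslant d$. Your version is more geometric---it exhibits the $G_0$-orbits inside $O$ as an at most $d$-dimensional family of $Z^0$-translates, which is an illuminating picture---but it pays for this with Rosenlicht's theorem, a much heavier input than anything in the paper's proof, which needs only the standard isomorphism $k[G]^{G_0}\cong k[G/G_0]$ for the normal subgroup $G_0$.

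One justification in your orbit computation should be corrected, though the computation itself survives. You derive $G_0(gx_0)=z(G_0x_0)$ ``using that $Z^0$ is central''; but $Z^0$ is the identity component of the center of the Levi subgroup $L$, and it need not be central in $G$ (for example, in $G={\bf G}_a\rtimes{\bf G}_m$ the torus $Z^0={\bf G}_m$ acts nontrivially on ${\rm Rad}_u G={\bf G}_a$ by conjugation). The identity you need is true for a reason you already recorded at the outset: $G_0$ is normal in $G$, so $G_0z=zG_0$, and hence $G_0gx_0=G_0g_0zx_0=G_0zx_0=zG_0x_0$. With that one-line repair your proof is complete.
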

\begin{proof} Let $X$ be the closure of the $G$-orbit of a point $x\in X$.
The morphism $G\to X$, $g\mapsto g\cdot x$, is $G$-equivariant
with respect to the action of $G$ on itself by left translations. Since its image is dense in $X$, the corresponding comorphism
 is a $G$-equivariant embedding of the $k$-algebras
  \begin{equation}\label{incl}
 k[X]\hookrightarrow k[G].
  \end{equation}

 Let $L$, $Z$, $Z^0$, $F$, and $H$ be as in the proof of Lemma \ref{GGG}.
 From \eqref{incl} and Theorem \ref{MLsubset} we then infer that
  \begin{equation}\label{incll}
 {\rm ML}(X)\subseteq k[X]^H\hookrightarrow k[G]^H.
  \end{equation}
  Since $G/H$ is isomorphic to $Z^0/F$ and $\dim Z^0/F=\dim Z^0=\dim Z=d$, we have $\dim G/H=d$.
   As $k[G]^H$ is isomorphic to $k[G/H]$, this and  \eqref{incll} imply
     the claim.
\end{proof}

\begin{corollary}\label{cone} Let $X$ be the closure in
${\bf P}^n$
of an orbit of
a connected algebraic subgroup
$G$ in ${\rm Aut}({\bf P}^n)$.
Let
$\,{\widehat X}\subseteq k^{n+1}$ be the affine cone over
$X$. Then ${\rm tr\,deg}_k{\rm ML}({\widehat X})\leqslant d+1$,
where $d$ is the dimension of the center of $\,G/{\rm Rad}_uG$.
\end{corollary}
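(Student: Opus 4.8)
The plan is to apply Theorem \ref{prop} to the affine cone $\widehat X$ equipped with the action of the preimage of $G$ in the full linear group, the extra ``$+1$'' coming from the one-dimensional center formed by the scalar matrices. First I would let $\rho\colon {\rm GL}_{n+1}\to {\rm PGL}_{n+1}={\rm Aut}({\bf P}^n)$ be the canonical projection and set $\widetilde G:=\rho^{-1}(G)$. Since $\ker\rho$ is the central subgroup ${\bf G}_m$ of scalar matrices, which is connected, and $G$ is connected, $\widetilde G$ is a connected linear algebraic subgroup of ${\rm GL}_{n+1}$ fitting into a central extension
\begin{equation*}
1\to {\bf G}_m\to \widetilde G\to G\to 1 .
\end{equation*}
The tautological linear action of $\widetilde G$ on $k^{n+1}$ preserves $\widehat X$ (as $\widehat X$ is the cone over the $G$-stable $X$), and the projection $k^{n+1}\setminus\{0\}\to {\bf P}^n$ is $\widetilde G$-equivariant. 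Thus $\widehat X$ is an irreducible affine $\widetilde G$-variety.

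Next I would verify that $\widehat X$ contains a dense $\widetilde G$-orbit. Choosing $v\in k^{n+1}\setminus\{0\}$ lying over a point $[v]\in{\bf P}^n$ whose $G$-orbit is dense in $X$, the orbit $\widetilde G\cdot v$ is contained in $\widehat X$, is mapped by $\rho$-equivariance onto $G\cdot[v]$, and contains the punctured line ${\bf G}_m\cdot v$ in each fiber of the projection. Hence $\dim\widetilde G\cdot v\geqslant \dim X+1=\dim\widehat X$, so $\widetilde G\cdot v$ is dense in $\widehat X$, and Theorem \ref{prop} applies to give ${\rm tr\,deg}_k{\rm ML}(\widehat X)\leqslant d'$, where $d':=\dim Z(\widetilde G/{\rm Rad}_u\widetilde G)$.

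The step I expect to be the crux is the identity $d'=d+1$. Since ${\bf G}_m$ is a torus it meets the unipotent group ${\rm Rad}_u\widetilde G$ trivially, so $\rho$ restricts to an isomorphism ${\rm Rad}_u\widetilde G\cong{\rm Rad}_uG$: indeed $\rho({\rm Rad}_u\widetilde G)$ is connected, normal and unipotent, hence contained in ${\rm Rad}_uG$, while $G/\rho({\rm Rad}_u\widetilde G)\cong(\widetilde G/{\rm Rad}_u\widetilde G)/{\bf G}_m$ is reductive, giving the reverse inclusion. Consequently, writing $L:=G/{\rm Rad}_uG$ and $\widetilde L:=\widetilde G/{\rm Rad}_u\widetilde G$, one has $L\cong\widetilde L/{\bf G}_m$ with ${\bf G}_m$ a central subtorus of the reductive group $\widetilde L$.

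Finally I would carry out the dimension count. Decomposing $\widetilde L=Z(\widetilde L)^0\cdot(\widetilde L,\widetilde L)$ and noting that ${\bf G}_m\subseteq Z(\widetilde L)^0$ while $(\widetilde L,\widetilde L)\cap{\bf G}_m$ is finite (it lies in the center of the semisimple group $(\widetilde L,\widetilde L)$), the derived groups satisfy $\dim(L,L)=\dim(\widetilde L,\widetilde L)$. Therefore
\begin{equation*}
d=\dim Z(L)^0=\dim L-\dim(L,L)=\dim\widetilde L-1-\dim(\widetilde L,\widetilde L)=d'-1 .
\end{equation*}
Combining this with the bound from Theorem \ref{prop} gives ${\rm tr\,deg}_k{\rm ML}(\widehat X)\leqslant d'=d+1$, as required. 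The only delicate points are the triviality of ${\bf G}_m\cap{\rm Rad}_u\widetilde G$ and the finiteness of $(\widetilde L,\widetilde L)\cap{\bf G}_m$, both of which follow from elementary structure theory of reductive groups in characteristic zero already invoked in Lemma \ref{GGG}.
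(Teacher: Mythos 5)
Your proposal is correct and follows essentially the same route as the paper: the paper's proof also pulls $G$ back along ${\bf GL}_{n+1}\to{\rm Aut}({\bf P}^n)$, observes that $\widehat X$ is the closure of an orbit of this pullback and that the center of the pullback modulo its unipotent radical has dimension $d+1$, and then invokes Theorem \ref{prop}. The only difference is that the paper states these two facts without verification, whereas you supply the (correct) details — the dimension count for the dense orbit and the identification ${\rm Rad}_u\widetilde G\cong{\rm Rad}_uG$ yielding $d'=d+1$.
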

\begin{proof} Let $\widehat G$ be the pullback of $\,G$ with respect to the natural projection
${\bf GL}_{n+1}\to %%{\bf PGL}_{n+1}=
{\rm Aut}({\bf P}^n)$.
%%${\rm Aut}(k^{n+1})\to{\rm Aut}({\bf P}^n)$.
Then $\widehat X$ is the closure of a $\widehat G$-orbit
in $k^{n+1}$ and the dimension of the center of
$\,{\widehat G}/{\rm Rad}_u{\widehat G}$ is $d+1$; whence the claim by
Theorem \ref{prop}.
\end{proof}

\begin{lemma}\label{otimes} For any varieties $X_1$ and $X_2$,
\begin{equation}\label{tensor}
{\rm ML}(X_1\times X_2)\subseteq {\rm ML}(X_1)\otimes_k {\rm ML}(X_2).
\end{equation}
\end{lemma}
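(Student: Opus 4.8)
The plan is to bound ${\rm ML}(X_1\times X_2)$ from above by intersecting over only those ${\bf G}_a$-actions on $X_1\times X_2$ that come from the two factors, and then to identify the resulting intersection by the linear algebra of tensor products over the field $k$. Recall from the Conventions that $k[X_1\times X_2]=k[X_1]\otimes_k k[X_2]$ and that the subalgebra generated by subalgebras $A\subseteq k[X_1]$ and $B\subseteq k[X_2]$ is $A\otimes_k B$; in particular the right-hand side of \eqref{tensor} is the subalgebra of $k[X_1\times X_2]$ generated by ${\rm ML}(X_1)$ and ${\rm ML}(X_2)$. I would first prove the two one-sided bounds
\begin{equation}\label{onesided}
{\rm ML}(X_1\times X_2)\subseteq {\rm ML}(X_1)\otimes_k k[X_2],\qquad {\rm ML}(X_1\times X_2)\subseteq k[X_1]\otimes_k {\rm ML}(X_2),
\end{equation}
and then intersect them.

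By symmetry it suffices to establish the first bound in \eqref{onesided}. Given an action of ${\bf G}_a$ on $X_1$ with image $H\subseteq{\rm Aut}(X_1)$, let $\widetilde H$ be the image of the induced action on $X_1\times X_2$ that is trivial on the second factor. The key step is the identity
\begin{equation}\label{prodinv}
k[X_1\times X_2]^{\widetilde H}=k[X_1]^{H}\otimes_k k[X_2].
\end{equation}
The inclusion $\supseteq$ is immediate. For $\subseteq$, write $f\in k[X_1\times X_2]$ as $f=\sum_i g_i\otimes h_i$ with the $h_i\in k[X_2]$ linearly independent over $k$; since $\widetilde H$ acts only through the first factor, the action sends $f$ to $\sum_i (u\cdot g_i)\otimes h_i$ for $u\in{\bf G}_a$, so invariance of $f$ together with the $k$-linear independence of the $h_i$ in $k[X_1]\otimes_k k[X_2]$ forces $u\cdot g_i=g_i$ for all $i$, i.e. each $g_i\in k[X_1]^{H}$. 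Now the images $\widetilde H$ form a subfamily of all images of homomorphisms ${\bf G}_a\to{\rm Aut}(X_1\times X_2)$, so intersecting \eqref{prodinv} over all $H$ and using that $k[X_2]$ is free over $k$ (whence intersection of subspaces of $k[X_1]$ commutes with $\,\cdot\,\otimes_k k[X_2]$) gives
\[
{\rm ML}(X_1\times X_2)\subseteq\bigcap_{H}\bigl(k[X_1]^{H}\otimes_k k[X_2]\bigr)=\Bigl(\bigcap_{H}k[X_1]^{H}\Bigr)\otimes_k k[X_2]={\rm ML}(X_1)\otimes_k k[X_2].
\]

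Finally I would intersect the two bounds in \eqref{onesided} using the elementary fact that for subspaces $A\subseteq V$ and $B\subseteq W$ of $k$-vector spaces one has $(A\otimes_k W)\cap(V\otimes_k B)=A\otimes_k B$, proved by extending bases of $A$ and $B$ to bases of $V$ and $W$ and comparing coordinates. Taking $V=k[X_1]$, $W=k[X_2]$, $A={\rm ML}(X_1)$, $B={\rm ML}(X_2)$ then yields \eqref{tensor}. The only real obstacle is \eqref{prodinv}: everything hinges on the fact that invariants of the product action factor as a tensor product, which in turn rests on $k$-linear independence surviving in $k[X_1]\otimes_k k[X_2]$. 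This same phenomenon explains why one can expect only an inclusion rather than equality in \eqref{tensor}, since the ``mixed'' ${\bf G}_a$-actions on $X_1\times X_2$ not induced from either factor are simply discarded in this argument.
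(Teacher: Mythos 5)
Your proof is correct and follows essentially the same route as the paper's: both restrict attention to the ${\bf G}_a$-actions on $X_1\times X_2$ lifted from a single factor (trivial on the other) and use $k$-linear independence in $k[X_1]\otimes_k k[X_2]$ to force the coefficients of an invariant element to be invariant themselves. The only difference is bookkeeping: the paper re-decomposes $f$ with linearly independent coefficients in ${\rm ML}(X_1)$ and repeats the argument on the second factor, whereas you package the same computation as the identity $k[X_1\times X_2]^{\widetilde H}=k[X_1]^{H}\otimes_k k[X_2]$ and then intersect the two one-sided bounds via $(A\otimes_k W)\cap(V\otimes_k B)=A\otimes_k B$ --- an organizational, not mathematical, distinction.
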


\begin{proof} Take an element $f\in {\rm ML}(X_1\times X_2)$.
Since
$k[X_1\times X_2]$ is generated by $k[X_1]$ and $k[X_2]$,   there
is a decomposition
\begin{equation}\label{deco}
f= \sum_{i=1}^n s_it_i,\quad%%\mbox{where}\quad
s_1,\ldots, s_n\in k[X_1],\;
t_1,\ldots, t_n\in k[X_2].
\end{equation}
We may (and shall) assume that $t_1,\ldots, t_n$ in \eqref{deco}
are linearly independent over $k$. As  $k[X_1\times X_2]=k[X_1]\otimes_k k[X_2]$, then they are
also linearly independent over $k[X_1]$.

Consider an action $\alpha$ of ${\bf G}_a$ on $X_1$.  Then $k[X_1]$ is stable and $k[X_2]$ is pointwise fixed with respect to the diagonal action of ${\bf G}_a$ on $X_1\times X_2$ determined by $\alpha$ and trivial action on $X_2$. For
every element $g\in {\bf G}_a$ and this diagonal action,
\eqref{MLL} and \eqref{deco} imply that
\begin{equation}\label{decom}
 \sum_{i=1}^n s_it_i=f=g\cdot f=\sum_{i=1}^n(g\cdot s_i) t_i.
\end{equation}
Since $t_1,\ldots, t_n$
are linearly independent over $k[X_1]$, we infer from \eqref{decom} that
every $s_i$ is invariant with respect to $\alpha$. As
$\alpha$ is arbitrary, \eqref{MLL} implies that $s_1,\ldots, s_n\in {\rm ML}(X_1)$. Hence $f$ is decomposed~as
\begin{equation}\label{dash}
f= \sum_{i=1}^m s_i' t_i',\quad s_1',\ldots, s_m'\in {\rm ML}(X_1),\; t_1',\ldots, t_m'\in k[X_2],
\end{equation}
where $s_1',\ldots, s_m'$ are linearly independent over $k$.
The same argument as above then yields $t_1',\ldots, t_m'\in {\rm ML}(X_2)$. Now \eqref{tensor}
follows from \eqref{dash}.
\end{proof}

\begin{corollary} \label{c1}
For any varieties $X_1$ and $X_2$, the following %%properties
are equivalent:
\begin{enumerate}
\item[\rm(i)] ${\rm ML}(X_1)$ and $\,{\rm ML}(X_2)$ lie in ${\rm ML}(X_1\times X_2)$;
\item[\rm(ii)]
${\rm ML}(X_1\times X_2)= {\rm ML}(X_1)\otimes_k {\rm ML}(X_2)$.
\end{enumerate}
\end{corollary}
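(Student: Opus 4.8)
The plan is to prove both implications using Lemma \ref{otimes} together with the structural fact, recorded in the conventions, that the $k$-subalgebra of $k[X_1\times X_2]$ generated by subalgebras $A\subseteq k[X_1]$ and $B\subseteq k[X_2]$ is exactly $A\otimes_k B$. Throughout I use that ${\rm ML}(X)$ is a $k$-subalgebra of $k[X]$, being an intersection of the invariant subalgebras $k[X]^H$, and that it contains $k$; hence each ${\rm ML}(X_i)$ embeds into ${\rm ML}(X_1)\otimes_k{\rm ML}(X_2)$ via the canonical maps $s\mapsto s\otimes 1$ and $t\mapsto 1\otimes t$.

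First I would dispatch the implication (ii)$\Rightarrow$(i), which is immediate: if ${\rm ML}(X_1\times X_2)={\rm ML}(X_1)\otimes_k{\rm ML}(X_2)$, then since each factor ${\rm ML}(X_i)$ sits inside the tensor product, using $k\subseteq{\rm ML}(X_j)$ for the complementary index $j$, both ${\rm ML}(X_1)$ and ${\rm ML}(X_2)$ lie in ${\rm ML}(X_1\times X_2)$.

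For the converse (i)$\Rightarrow$(ii), I would argue as follows. By hypothesis, ${\rm ML}(X_1)$ and ${\rm ML}(X_2)$ are both contained in ${\rm ML}(X_1\times X_2)$, which is itself a $k$-subalgebra of $k[X_1\times X_2]$. Consequently it contains the $k$-subalgebra generated by ${\rm ML}(X_1)$ and ${\rm ML}(X_2)$, and by the structural fact above this generated subalgebra is precisely ${\rm ML}(X_1)\otimes_k{\rm ML}(X_2)$. This yields the inclusion ${\rm ML}(X_1)\otimes_k{\rm ML}(X_2)\subseteq{\rm ML}(X_1\times X_2)$, while Lemma \ref{otimes} supplies the reverse inclusion; combining the two gives the asserted equality.

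There is no genuine obstacle here: the substantive content is entirely carried by Lemma \ref{otimes}, and the corollary merely repackages that one-sided inclusion into an \emph{if and only if} statement. The only point demanding a moment's care is the identification of the subalgebra generated by ${\rm ML}(X_1)$ and ${\rm ML}(X_2)$ with their tensor product, but this is exactly what the conventions record alongside the equality $k[X_1\times X_2]=k[X_1]\otimes_k k[X_2]$.
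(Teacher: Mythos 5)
Your proof is correct and is exactly the argument the paper intends: the corollary is stated without proof as an immediate consequence of Lemma \ref{otimes} together with the convention that the subalgebra of $k[X_1\times X_2]$ generated by subalgebras $A\subseteq k[X_1]$ and $B\subseteq k[X_2]$ is $A\otimes_k B$. You have simply made that implicit reasoning explicit, with both implications handled as the paper would have them.
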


\begin{corollary} %%Let $X_1$ and $X_2$ be algebraic varieties.
 If $\,{\rm ML}(X_1)=k$ and $\,{\rm ML}(X_2)=k$, then ${\rm ML}(X_1\times X_2)=k$.
\end{corollary}

\begin{corollary} Let $X_1$ and $X_2$ be the varieties such that ${\rm ML}(X_1)$ and ${\rm ML}(X_2)$ are generated by units.
 Then
$
{\rm ML}(X_1\times X_2)= {\rm ML}(X_1)\otimes_k {\rm ML}(X_2).
$
\end{corollary}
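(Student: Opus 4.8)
The plan is to reduce the asserted equality to the inclusion criterion of Corollary \ref{c1}: by that corollary it suffices to prove that both ${\rm ML}(X_1)$ and ${\rm ML}(X_2)$ are contained in ${\rm ML}(X_1\times X_2)$. Since each of ${\rm ML}(X_1)$, ${\rm ML}(X_2)$ is by hypothesis generated as a $k$-algebra by units, and since ${\rm ML}(X_1\times X_2)$ is itself a $k$-subalgebra of $k[X_1\times X_2]$, it is enough to show that every unit of $k[X_1]$ and every unit of $k[X_2]$ lies in ${\rm ML}(X_1\times X_2)$.

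The crux, which I would isolate as a lemma, is the following assertion: for an arbitrary variety $Y$, every unit $u\in k[Y]^*$ lies in ${\rm ML}(Y)$. To prove it, I would fix a homomorphism $\varphi\colon {\bf G}_a\to {\rm Aut}(Y)$ with image $H$ and let $a\colon {\bf G}_a\times Y\to Y$ be the corresponding action morphism. Its comorphism is a $k$-algebra homomorphism $a^*\colon k[Y]\to k[{\bf G}_a\times Y]$, and $k[{\bf G}_a\times Y]=k[{\bf G}_a]\otimes_k k[Y]=k[Y][t]$, where $t$ is a coordinate on ${\bf G}_a$. Because $a^*(u)\,a^*(u^{-1})=a^*(1)=1$, the element $a^*(u)$ is a unit of $k[Y][t]$. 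Now $k[Y]$ is reduced, hence so is the polynomial ring $k[Y][t]$, and therefore its units are exactly the units of $k[Y]$; in particular $a^*(u)$ is independent of $t$. Writing $a^*(u)=v\in k[Y]$ and evaluating gives $u(\varphi(g)(y))=v(y)$ for all $g\in{\bf G}_a$ and $y\in Y$; taking $g$ to be the identity yields $v=u$, so $u\circ\varphi(g)=u$ for every $g$, i.e. $u$ is $H$-invariant. As $H$ was arbitrary, \eqref{MLL} gives $u\in{\rm ML}(Y)$.

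With this lemma the conclusion is immediate. A unit $u\in k[X_1]^*$ has its inverse $u^{-1}$ in $k[X_1]\subseteq k[X_1\times X_2]$, so $u$ is also a unit of $k[X_1\times X_2]$; by the lemma applied to $Y=X_1\times X_2$ we get $u\in {\rm ML}(X_1\times X_2)$. Since ${\rm ML}(X_1)$ is generated by such units and ${\rm ML}(X_1\times X_2)$ is a $k$-algebra, we obtain ${\rm ML}(X_1)\subseteq {\rm ML}(X_1\times X_2)$, and symmetrically ${\rm ML}(X_2)\subseteq {\rm ML}(X_1\times X_2)$. Thus condition (i) of Corollary \ref{c1} holds, and its equivalent condition (ii) is precisely the desired equality. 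I expect the only genuine content to be the units lemma, and within it the one delicate point is the identification of $k[Y][t]^*$ with $k[Y]^*$, which hinges on $k[Y]$ being reduced so that the standard description of the units of a polynomial ring forces all higher coefficients to vanish.
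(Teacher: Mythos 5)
Your proposal is correct and takes essentially the same route as the paper: the paper likewise reduces the statement to Corollary~\ref{c1} by observing that $k[X_1]^*$ and $k[X_2]^*$ lie in $k[X_1\times X_2]^*$ and that $k[X_1\times X_2]^*\subset {\rm ML}(X_1\times X_2)$. The only difference is that the paper cites \cite[1.4]{F} for the latter fact, whereas you prove it directly (and correctly) via the comorphism of the ${\bf G}_a$-action and the description of units of a polynomial ring over a reduced ring.
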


\begin{proof} This
%%claim
follows from Corollary \ref{c1} since
$k[X_1]^*$ and $k[X_2]^*$ lie in $k[X_1\times X_2]^*$ and
$k[X_1\times X_2]^*\subset {\rm ML}(X_1\times X_2)$, cf.\;\cite[1.4]{F}.
\end{proof}

\begin{definition} \label{def_toral}
A variety is called {\it toral\,} if it is isomorphic to a closed subvariety of a linear algebraic torus.
\end{definition}

Note that closed subvarieties and  products of toral varieties are toral.

\begin{lemma}\label{toral}
Let $X$ be an affine variety.
\begin{enumerate}
\item[\rm(a)] The following
are equivalent:
\begin{enumerate}
\item[$\rm (a_1)$] $X$ is toral;
\item[$\rm (a_2)$] $k[X]$ is generated by  $k[X]^*$.
\end{enumerate}
\item[\rm(b)] For every
finite subgroup $G$ of ${\rm Aut}(X)$, there is a co\-ve\-ring of $X$ by $G$-stable open toral sets.
\item[\rm(c)] If $X$ is toral, then
\begin{enumerate}
\item[$\rm (c_1)$] for every unipotent linear algebraic group $H$, every algebraic homomorphism $\varphi\colon H\to {\rm Aut}(X)$ is trivial{\rm;}
\item[$\rm (c_2)$]  ${\rm ML}(X)=k[X]$.
\end{enumerate}
\end{enumerate}
\end{lemma}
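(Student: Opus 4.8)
The three parts are essentially independent, so the plan is to treat them in turn, proving (a) first since (b) and (c) both invoke it.

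For part (a), the implication $(a_1)\Rightarrow(a_2)$ is immediate: a closed embedding $X\hookrightarrow T$ into a torus $T\cong({\bf A}^1_*)^n$ yields a surjection $k[T]\twoheadrightarrow k[X]$; since $k[T]$ is the Laurent polynomial algebra it is generated by the units $t_1^{\pm1},\dots,t_n^{\pm1}$, and a surjective ring homomorphism carries units to units, so the images generate $k[X]$ by units. For $(a_2)\Rightarrow(a_1)$ I would use that $k[X]$ is a finitely generated $k$-algebra: expressing a finite set of algebra generators through finitely many units produces units $u_1,\dots,u_n\in k[X]^*$ generating $k[X]$. The morphism $X\to({\bf A}^1_*)^n$, $x\mapsto(u_1(x),\dots,u_n(x))$, is well defined because the $u_i$ do not vanish, and its comorphism sends $t_i\mapsto u_i$; as the image contains $u_1,\dots,u_n$ it is surjective, so the morphism is a closed embedding and $X$ is toral.

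For part (b), fix a finite $G\subseteq{\rm Aut}(X)$ and algebra generators $f_1,\dots,f_m$ of $k[X]$. The key construction is, for a prescribed point $x$, to choose constants $c_1,\dots,c_m\in k$ avoiding for each $i$ the finite set $\{(g\!\cdot\! f_i)(x):g\in G\}$ (possible since $k$ is infinite), and to form the $G$-invariant function
\[
F:=\prod_{g\in G}\prod_{i=1}^m\bigl(g\!\cdot\! f_i-c_i\bigr).
\]
Then the principal open subset $X_F:=\{y\in X\mid F(y)\neq0\}$ is $G$-stable because $F$ is $G$-invariant (the product is permuted by $G$), it contains $x$ by the choice of the $c_i$, and it is toral by part (a): in $k[X]_F=k[X][F^{-1}]$ every factor $g\!\cdot\! f_i-c_i$ is a unit, since it divides the invertible $F$, and the subalgebra generated by these units together with $F^{-1}$ already contains each $f_i=(f_i-c_i)+c_i$, hence all of $k[X]_F$. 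Letting $x$ range over $X$ yields the desired covering.

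For part (c) it suffices to prove $(c_1)$, since $(c_2)$ follows at once: by definition ${\rm ML}(X)=\bigcap_H k[X]^H$ over the images of homomorphisms ${\bf G}_a\to{\rm Aut}(X)$, and $(c_1)$ forces every such $H$ to be trivial, whence $k[X]^H=k[X]$. For $(c_1)$ I would embed $X\hookrightarrow T=({\bf A}^1_*)^n$ as a closed subvariety with coordinate units $t_1,\dots,t_n$, which separate the points of $X$. Given unipotent $H$ and the algebraic action realizing $\varphi$, fix $x\in X$ and consider, for each $i$, the orbit map composed with the $i$-th coordinate, $H\to{\bf A}^1_*$, $h\mapsto t_i(h\!\cdot\! x)$; this is a nowhere-vanishing regular function on $H$, i.e. an element of $k[H]^*$. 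Since a connected unipotent group in characteristic zero is isomorphic as a variety to an affine space, its only invertible regular functions are the nonzero constants, so $k[H]^*=k^*$ and the function equals $t_i(x)$ identically. As the $t_i$ separate points, $h\!\cdot\! x=x$ for all $h$ and all $x$, so $\varphi$ is trivial.

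I expect the genuinely creative step to be the construction in part (b): one must simultaneously arrange $G$-stability (forcing the product over the $G$-orbit), coverage of a given point (forcing the generic shift by the $c_i$), and torality (preserved because shifting generators by constants and localizing at a product of the resulting units keeps the coordinate ring generated by units). Parts (a) and $(c_1)$ are short once the right map is written down — the embedding by units for (a), and the coordinate-composed orbit map for $(c_1)$.
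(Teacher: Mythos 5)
Your proof is correct, and parts (a) and (b) are essentially the paper's own arguments: in (a) the paper also embeds by generating units (it maps to ${\bf A}^n$ first and observes the image misses the coordinate hyperplanes, whereas you argue via surjectivity of the comorphism onto the torus --- a cosmetic difference), and in (b) the paper performs exactly your construction: shift the generators by constants so that they vanish nowhere on $G\cdot x$, close the generating set under $G$, and localize at the $G$-invariant product. Part (c) is where you genuinely diverge. The paper analyzes the orbit $H\cdot x$ inside $X$: since $H$ is unipotent and $X$ is affine, the orbit is closed (\cite[4.10]{Bo}); in characteristic zero it is isomorphic to ${\bf A}^d$ by \cite{Po1}; hence it is a closed subvariety of a toral variety, so toral, and then part (a) together with $k[{\bf A}^d]^*=k^*$ forces $d=0$. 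You instead pull the torus coordinates back along the orbit map $H\to X$, obtaining elements of $k[H]^*$, and conclude they are constant because $k[H]^*=k^*$; since the coordinates separate points of $X$, every point is fixed. Your route is more self-contained: it needs neither the closedness of unipotent orbits on affine varieties, nor the result of \cite{Po1} that such orbits are affine spaces, nor even part (a) --- only that $H$ itself is an affine space as a variety. The one point you should make explicit is that in characteristic zero every unipotent linear algebraic group is automatically connected (each element lies in the one-dimensional unipotent subgroup obtained as the closure of its powers, exactly as in the paper's proof of Lemma~\ref{GGG}); this is what licenses the identification of $H$ with an affine space and hence $k[H]^*=k^*$, for on a disconnected group a nowhere-vanishing function need only be constant on each component, and your argument would then only show that $H^0$ acts trivially.
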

\begin{proof} (a) Every character of a linear algebraic torus $T$
is an element of
 $k[T]^*$ and
   $k[T]^*$ is the $k$-linear span of the set of all characters \cite[Sect.\,8.2]{Bo}; this and Definition  \ref{def_toral} imply $\rm (a_1)$$\Rightarrow$$\rm (a_2)$.

Conversely, if $\rm (a_2)$ holds, let $k[X]=k[f_1,\ldots, f_n]$ for
some $f_i\in k[X]^*$. Then $\iota \colon X\to {\bf A}\!^n$,
$x\mapsto (f_1(x), \ldots, f_n(x))$, is a closed embedding
since $X$ is affine.
The standard coordinate functions on ${\bf A}\!^n$ do not vanish
on $\iota(X)$ since $f_i$ does not vanish on $X$. Hence $\iota(X)\subset ({\bf G}_m)^n$. This proves $\rm (a_2)$$\Rightarrow$$\rm (a_1)$ and completes the proof of (a).

(b) Let $x$ be a point of $X$. We have to show that $x$ is contained in a $G$-stable open toral subset of $X$.
Let $k[X]=k[h_1,\ldots, h_s]$.  Replacing $h_i$ by $h_i+\alpha_i$ for an appropriate $\alpha_i\in k$, we may (and shall) assume that every $h_i$ vanishes nowhere on the $G$-orbit $G\cdot x$ of $x$. Enlarging the set $\{h_1,\ldots, h_s\}$ by including in it $g\cdot h_i$ for every $i$ and $g
\in G$, we may (and shall) assume that
$\{h_1,\ldots, h_s\}$ is $G$-stable. Then $h:=h_1\cdots h_s\in k[X]^G$. Hence the affine open set $X_h:=\{z\in X \mid h(z)\neq 0\}$ is $G$-stable and contains $G\cdot x$.
Since $k[X_h]=k[h_1,\ldots, h_s, 1/h]$
  we have
  $h_i\in k[X_h]^*$ for every $i$. Hence,  $X_h$ is toral by (a). This proves (b).

(c) Consider the action of $H$ on $X$ determined by $\varphi$.
Let $H\cdot x$ be the $H$-orbit of a point $x\in X$.
Since ${\rm char}\,k=0$,
$H\cdot x$ is isomorphic to ${\bf A}\!^d$ for some $d$, see \cite[Cor.\,of Theorem 2]{Po1}. Since $H$ is unipotent and $X$ is affine, $H\cdot x$ is closed in $X$, cf.\,\cite[4.10]{Bo}. Hence $H\cdot x$ is toral. Since $k[{\bf A}\!^d]^*=k^*$, from (a) we then infer that $d=0$, i.e., $x$ is a fixed point. This proves  $\rm (c_1)$. In turn, $\rm (c_1)$ implies $\rm (c_2)$ by \eqref{MLL}.
\end{proof}

\begin{corollary} \label{c2}
If $\,{\rm ML}(X_1)=k$ and $\,X_2$ is toral, then ${\rm ML}(X_1\times X_2)=k[X_2]$.
\end{corollary}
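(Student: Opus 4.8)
The plan is to deduce the equality from Corollary~\ref{c1}, whose criterion~(i) asks precisely that both ${\rm ML}(X_1)$ and ${\rm ML}(X_2)$ lie inside ${\rm ML}(X_1\times X_2)$. First I would record what the hypotheses say about the two factors separately. Since $X_2$ is toral, Lemma~\ref{toral}(c$_2$) gives ${\rm ML}(X_2)=k[X_2]$, and by assumption ${\rm ML}(X_1)=k$. Because $k\otimes_k k[X_2]=k[X_2]$, the desired conclusion ${\rm ML}(X_1\times X_2)=k[X_2]$ is exactly the assertion ${\rm ML}(X_1\times X_2)={\rm ML}(X_1)\otimes_k{\rm ML}(X_2)$, i.e.\ condition~(ii) of Corollary~\ref{c1}. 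So it suffices to verify condition~(i).

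The inclusion ${\rm ML}(X_1)=k\subseteq {\rm ML}(X_1\times X_2)$ is automatic, as the constants lie in every Makar--Limanov invariant. The substance is the inclusion ${\rm ML}(X_2)=k[X_2]\subseteq {\rm ML}(X_1\times X_2)$, and here I expect the main obstacle: a ${\bf G}_a$-action on $X_1\times X_2$ need not respect the product decomposition, so there is no a priori reason for a function pulled back from $X_2$ to be invariant under it. To circumvent this I would exploit torality more strongly rather than argue factor-by-factor. By Lemma~\ref{toral}(a), $k[X_2]$ is generated as a $k$-algebra by its group of units $k[X_2]^*$. These units remain units in $k[X_1\times X_2]$, and units of a coordinate algebra are fixed by every ${\bf G}_a$-action, that is, $k[X_1\times X_2]^*\subseteq {\rm ML}(X_1\times X_2)$ (cf.\ \cite[1.4]{F}, exactly as invoked in the proof of the corollary preceding Definition~\ref{def_toral}). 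Hence $k[X_2]^*\subseteq {\rm ML}(X_1\times X_2)$, and since ${\rm ML}(X_1\times X_2)$ is a $k$-subalgebra containing this generating set, $k[X_2]={\rm ML}(X_2)\subseteq {\rm ML}(X_1\times X_2)$.

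With condition~(i) of Corollary~\ref{c1} verified, the equivalent condition~(ii) yields ${\rm ML}(X_1\times X_2)={\rm ML}(X_1)\otimes_k{\rm ML}(X_2)=k\otimes_k k[X_2]=k[X_2]$, as required. Equivalently, one could split this into the two inclusions directly, using Lemma~\ref{otimes} for ${\rm ML}(X_1\times X_2)\subseteq {\rm ML}(X_1)\otimes_k{\rm ML}(X_2)=k[X_2]$ and the units argument above for the reverse inclusion $k[X_2]\subseteq {\rm ML}(X_1\times X_2)$; but routing the argument through Corollary~\ref{c1} keeps the bookkeeping minimal and isolates the only real point, namely that torality forces $k[X_2]$ to be generated by units that no unipotent action can move.
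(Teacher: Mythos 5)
Your proof is correct and follows essentially the same route the paper intends: the paper states Corollary~\ref{c2} without proof because it follows from Lemma~\ref{toral}(a),(c$_2$) together with the units argument ($k[X_2]^*\subseteq k[X_1\times X_2]^*\subseteq{\rm ML}(X_1\times X_2)$, cf.~\cite[1.4]{F}) and Corollary~\ref{c1}, which is exactly the chain you assemble. The only cosmetic difference is that the paper packages the units argument in the unnamed corollary on varieties whose Makar--Limanov invariants are generated by units, which you re-derive inline rather than cite.
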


Utilizing the above statements one gets
many interesting varieties with trivial Makar-Limanov invariant.
The following construction is typical (but not the only possible, see Examples \ref{Schubert} and \ref{nonrational}).

Let $G$ be a connected semisimple algebraic group acting on an affine variety $X$. By Hilbert's theorem, $k[X]^G$ is a finitely generated $k$-algebra. Let $k[X]^G=k[f_1,\ldots, f_n]$.
 For every
 $\alpha_1,\ldots,\alpha_n\in k$,
denote by
$X(\alpha_1,\ldots,\alpha_n)$ the closed subvariety of $X$
whose underlying topological space is
$\{x\in X\mid f_1(x)=\alpha_1,\ldots, f_n(x)=\alpha_n\}$
 (warning:
in general, the ideal $(f_1-\alpha_1,\ldots,f_n-\alpha_n)$ of $k[X]$ is not radical).
Let $Y$ be a $G$-stable closed suvariety of $X$. It is well-known
that
$k[X]^G\to k[Y]^G$,\ $f\mapsto f|_Y$,
is an epimorphism \cite[3.4]{PV2}. Hence $k[Y]^G=k$ if and only if $Y$ is contained in some $X(\alpha_1,\ldots,\alpha_n)$. From
Theorem \ref{MLsubset} we then infer that the Makar-Limanov invariant of every
$G$-stable closed subvariety of $X(\alpha_1,\ldots,\alpha_n)$ is trivial.

There are many instances where $f_1,\ldots, f_n$ can be explicitly described. E.g., classical invariant theory yields such a description for a number of finite-dimensional modules  $X$ of classical linear groups $G$; for some of them, it is proved that %%the ideal
$(f_1-\alpha_1,\ldots,f_n-\alpha_n)$ is radical.
If the latter happens, one obtains
 the instances
 of affine algebras
 with trivial Makar-Limanov invariant that are explicitly described by equations.

Below are several
illustrating
examples.

\begin{example}[Closures of adjoint orbits]\label{adj} Let $f_s$ be the sum of all principal $s\times s$-minors of the $n\times n$-matrix $(x_{ij})$ where $x_{11},\ldots, x_{nn}$ are variables considered as the standard coordinate functions
on
${\bf M}_{n\times n}$.
For $\alpha_1,\ldots,\alpha_n\in k$,
\begin{equation*}
{\bf M}_{n\times n}(\alpha_1,\ldots,\alpha_n):=\{a\in {\bf M}_{n\times n} \mid f_1(a)=\alpha_1,\ldots, f_n(a)=\alpha_n\}
\end{equation*}
is the set of all matrices
whose characteristic polynomial
is $t^n+\sum_{i=1}^n(-1)^i\alpha_it^{n-i}$.

Consider the action of ${\bf SL}_n$ on ${\bf M}_{n\times n}$ by conjugation.
 Then  $k[{\bf M}_{n\times n}]^{{\bf SL}_n}$ is freely generated by $f_1,\ldots, f_n$ (cf.,\;e.g.,\;\cite[0.6]{PV2}). Moreover,
 ${\bf M}_{n\times n}(\alpha_1,\ldots,\alpha_n)$
  is irreducible and
 the ideal
  $(f_1-\alpha_1,\ldots, f_n-\alpha_n)$ of $k[{\bf M}_{n\times n}]$
is
radical
 (see the next paragraph). Hence,
 ${\bf M}_{n\times n}(\alpha_1,\ldots,\alpha_n)$
   is a closed subvariety of ${\bf M}_{n\times n}$ such that
\begin{equation*}
{\rm ML}(
{\bf M}_{n\times n}(\alpha_1,\ldots, \alpha_n))
=k
\end{equation*}
and $k[\ldots, x_{ij},\ldots]/(f_1-\alpha_1,\ldots,f_n-\alpha_n)$ is the $k$-domain with trivial Makar-Limanov invariant.

This admits the following generalization.
Let $G$ be a connected reductive algebraic group and let ${\rm Lie}(G)$ be the Lie algebra of $G$ endowed with the adjoint action of $G$. By \cite{K} the graded
$k$-algebra $k[{\rm Lie}(G)]^G$ is free and, for every minimal system of its homogeneous
generators $f_1,\ldots, f_{r}$ and constants $\alpha_1,\ldots,\alpha_r\in k$,
\begin{enumerate}
\item[(i)]
 ${\rm Lie}(G){(\alpha_1,\ldots,\alpha_r)}:=\{a\in {\rm Lie}(G)\mid f_1(a)=\alpha_1,\ldots, f_r(a)=\alpha_r\}$
is the closure of a $G$-orbit;
\item[(ii)] the ideal $(f_1-\alpha_1,\ldots, f_r-\alpha_r)$ of  $k[{\rm Lie}(G)]$ is radical.
    \end{enumerate}
 Since the center $Z$ of $G$ acts trivially on ${\rm Lie}(G)$ and $G/Z$ is semisimple, this yields
 \begin{equation*}
 {\rm ML}({\rm Lie}(G){(\alpha_1,\ldots,\alpha_r)})=k
 \end{equation*}
 and $k[{\rm Lie}(G)]/(f_1\!-\!\alpha_1,\ldots, f_r\!-\!\alpha_r)$ is the $k$-domain with
  trivial Makar-Limanov invariant.

 For $G={\bf GL}_n$, %%this yields the varieties
 we have
 ${\rm Lie}(G){(\alpha_1,\ldots,\alpha_r)}={\bf M}_{n\times n}(\alpha_1,\ldots,\alpha_n)$. %%
  \end{example}

\begin{example}[Determinantal varieties]\label{determ} Given positive integers $n\geqslant m>r$, let $\{x_{ij} \mid i=1,\ldots, n,\, j=1,\ldots, m\}$ be
 the set of variables
 considered  as the standard coordinates functions on
 ${\bf M}_{n\times m}$. Let $I_{n,m,r}$ be
the ideal  of $k[{\bf M}_{n\times m}]=k[\ldots, x_{ij},\ldots]$
generated by all $(r+1)\times (r+1)$-minors of the matrix
$(x_{ij})$.
Then $I_{n,m,r}$ is radical, cf., e.g., \cite{Pr}. The (affine) determinantal variety
$D_{n,m,r}$ is the subvariety of
${\bf M}_{n\times m}$
 defined by $I_{n,m,r}$. Its underlying set is that of $n\times m$-matrices of rank $\leqslant r$. It is stable with respect to the action of
${\bf SL}_n\times {\bf SL}_m$ on ${\bf M}_{n\times m}$ by $(g, h)\cdot a:=gah^{-1}$ and contains a dense orbit. Whence
\begin{equation*}
{\rm ML}(D_{n,m,r})=k
\end{equation*}
and $k[{\bf M}_{n\times m}]/I_{n,m,r}$ is the $k$-domain with trivial Makar-Limanov invariant.
\end{example}

\begin{example}[$S$-varieties in the sense of \cite{PV1}]\label{S} Denote by ${\sf S}^d{k}^n$  the  $d$th symmetric power of the coordinate vector space (of columns) ${k}^n$. The natural ${\bf SL}_n$-action on
${k}^n$ induces
 that on ${\sf S}^d{k}^n$. The (affine) Veronese morphism
\begin{equation*}
\nu_{n}^{d}\colon {k}^n\to {\sf S}^d{k}^n, \qquad v\mapsto v^d,
\end{equation*}
is ${\bf SL}_n$-equivariant.
Its image $\nu_{n}^{d}({k}^n)$
is closed and contains a dense
${\bf SL}_n$-orbit. The ideal of
$\nu_{n}^{d}({k}^n)$ is generated by all $2\times 2$-minors
of a certain symmetric matrix whose entries are the
coordinates on ${\sf S}^d{k}^n$, cf.\;\cite{Ha}. Thus,
\begin{equation*}
{\rm ML}\big(\nu_{n}^{d}({k}^n)\big)=k
\end{equation*}
and
the coordinate algebra of $\nu_{n}^{d}({k}^n)$ is the explicitly described $k$-domain with trivial Makar-Limanov invariant.

More generally,
the following combination of the Veronese and Segre morphisms
\begin{gather*}
\nu_{n_1,\ldots,n_s}^{d_1,\ldots,d_s}\colon
{k}^{n_1}\times\cdots\times {k}^{n_s}\to {\sf S}^{d_1}{k}^{n_1}\otimes\cdots\otimes {\sf S}^{d_s}{k}^{n_s},\\
(v_1,\ldots, v_s)\mapsto v^{d_1}\otimes\cdots\otimes v^{d_s},
\end{gather*}
is equivariant with respect to the natural
${\bf SL}_{n_1}\times\cdots\times {\bf SL}_{n_s}$-actions, its image is closed and contains a dense
orbit. Whence,
\begin{equation*}
{\rm ML}\big(\nu_{n_1,\ldots,n_s}^{d_1,\ldots,d_s}({k}^{n_1}\times
\cdots\times {k}^{n_s})\big)=k.
\end{equation*}

 In turn, this construction admits a further generalization. Namely, any matrix  $$A=\begin{pmatrix}a_{11}\ldots
a_{1s}\\
\ldots\ldots\ldots\\
a_{r1}\ldots a_{rs}
\end{pmatrix}$$ with the entries in ${\bf Z}_{>0}$ defines the diagonal morphism
\begin{equation*}
\nu_{n_1,\ldots, n_s}^A:=\nu_{n_1,\ldots,
n_s}^{a_{11},\ldots, a_{1s}}\times\cdots\times
\nu_{n_1,\ldots, n_s}^{a_{r1},\ldots, a_{rs}}.
\end{equation*}
 This morphism is  ${\bf SL}_{n_1}\times\cdots\times {\bf SL}_{n_s}$-equivariant and its image
\begin{equation*}
{\bf H}_{n_1,\ldots, n_s}^A:=\nu_{n_1,\ldots, n_s}^A({k}^{n_1}\times\cdots\times {k}^{n_s})
\end{equation*}
is closed and contains a dense %%${\bf SL}_{n_1}\times\cdots\times {\bf SL}_{n_s}$-
orbit. Thus,
\begin{equation*}
{\rm ML}({\bf H}_{n_1,\ldots, n_s}^A)=k.
\end{equation*}

In fact, ${\bf H}_{n_1,\ldots, n_s}^A$'s are special examples of varieties with trivial Makar-Limanov invariant obtained by the following general construction \cite{PV1}.

Let $G$ be a connected semisimple algebraic group and let $E(\lambda)$ be a simple $G$-module with the highest weight
$\lambda$ (with respect to a fixed Borel subgroup and its maximal torus). Let $v_{\lambda_i}$ be a highest vector of $E(\lambda_i)$.
For $x=v_{\lambda_1}+\cdots +v_{\lambda_s}\in E(\lambda_1)\oplus\cdots\oplus E(\lambda_s)$,
let
$X(\lambda_1,\ldots,\lambda_s)$ be the closure of the $G$-orbit of $x$. Up to $G$-isomorphism,
$X(\lambda_1,\ldots,\lambda_s)$ depends
only on $\lambda_1,\ldots,\lambda_s$. By Corollary~\ref{homom}
\begin{equation*}
{\rm ML}(X(\lambda_1,\ldots,\lambda_s))=k.
\end{equation*}

The ideal $I(\lambda)$ of  $X(\lambda)$ in $k[E(\lambda)]$ is generated by quadratic forms that can be explicitly described. Namely, $k[E(\lambda)]$ is the symmetric algebra of
the dual $G$-module $E(\lambda)^*=E(\lambda^*)$. The submodule
${\sf S}^2E(\lambda^*)$ of the $G$-module $k[E(\lambda)]$
contains a unique simple submodule with the highest weight
$2\lambda^*$, the Cartan component of ${\sf S}^2E(\lambda^*)$.
Hence
${\sf S}^2E(\lambda^*)$ contains a unique
submodule $L$ complement to the Cartan component.
 This $L$
generates $I(\lambda)$, see \cite[Theorem 4.1]{Br}.

For
$G={\bf SL}_{n_1}\times\cdots\times {\bf SL}_{n_s}$
and $\lambda_i=a_{i1}\varpi_1^{(1)}+\cdots +a_{is}\varpi_1^{(s)}$ where $\varpi_1^{(j)}$ is
the highest weight of the natural ${\bf SL}_{n_j}$-module
${k}^{n_j}$, we have
$$X(\lambda_1,\ldots,\lambda_s)={\bf H}^A_{n_1,\ldots,n_s}.$$
\end{example}

\begin{example}[Irreducible affine surfaces quasihomogeneous  with respect to an algebraic group in the sense of \cite{G}]\label{surf}
By \cite{Po1},
up to isomorphism,
such surfaces are exhausted  by the following list
(we maintain the notation of Example \ref{S}):
\begin{enumerate}
\item[(i)] smooth surfaces:
\begin{equation}\label{smooo}
{\bf A}\!^2,\; {\bf A}\!^1\times {\bf A}\!^1_*,\; {\bf A}\!^1_*\times {\bf A}\!^1_*,\; ({\bf P}^1\times {\bf P}^1)\setminus\Delta,\;
{\bf P}^2\setminus C,
\end{equation}
where  $\Delta$ is the diagonal in ${\bf P}^1\times {\bf P}^1$, and
$C$ is a nondegenerate conic in ${\bf P}^2$;
\item[(ii)] singular surfaces:
$${\bf V}(n_1,\ldots,n_r):={\bf H}_2^{A}\qquad \mbox{for $A=
(n_1,\ldots,n_r)^{\sf T}$, \quad $n_1,\ldots, n_r\geqslant 2$}.$$
\end{enumerate}

 Each of these surfaces but ${\bf A}\!^1\times {\bf A}\!^1_*$ and ${\bf A}\!^1_*\times {\bf A}\!^1_*$ admits an
${\bf SL}_2$-action with a dense orbit. Namely,  $({\bf P}^1\times {\bf P}^1)\setminus\Delta={\bf SL}_2/T$ and ${\bf P}^2\setminus C={\bf SL}_2/N(T)$, where $T$ is a maximal torus of ${\bf SL}_2$ and $N(T)$ its normalizer, see \cite[Lemma 2]{Po1}. The surface ${\bf V}(n_1,\ldots, n_r)$ is the closure of the ${\bf SL}_2$-orbit of
$v_1+\cdots+v_r\in {\bf R}_{n_1}\oplus\cdots\oplus
{\bf R}_{n_r}$, where $v_i$ is a highest vector of the simple
${\bf SL}_2$-module ${\bf R}_{n_i}$ of dimension $n_i+1$ (such a module is unique up to isomorphism),
see \cite[\S2]{Po1}. By
Corollary \ref{homom} this yields
\begin{equation}\label{dc}
{\rm ML}(({\bf P}^1\times {\bf P}^1)\setminus\Delta)={\rm ML}
({\bf P}^2\setminus C)={\rm ML}({\bf V}(n_1,\ldots,n_r))=k,
\end{equation}

As ${\bf A}_*^m:=({\bf A}_*^1)^m$ is toral and ${\rm ML}({\bf A}\!^n)=k$,
Corollary \ref{c2} implies that
\begin{equation}\label{=}
{\rm ML}({\bf A}\!^n\times {\bf A}_*^m)=k[{\bf A}_*^m].
\end{equation}
From \eqref{=} we get the Makar-Limanov invariants of the remaining three surfaces in \eqref{smooo}.
\end{example}

\begin{example}[Irreducible affine threefolds quasihomogeneous  with respect to an algebraic group in the sense of \cite{G}]
\label{threef}
We maintain the notation of Examples \ref{S} and \ref{surf}.
Identify ${\rm Pic}(({\bf P}^1\times {\bf
P}^1)\setminus \Delta)$ with ${\bf Z}$  by a fixed
isomorphism $\varphi$.
Let ${\bf X}_n$ be the total space of the
one-dimensional vector bundle over $({\bf P}^1\times
{\bf P}^1)\setminus \Delta$ corresponding to $n\in
{\bf Z}$, and let ${\bf X}_n^*$ be the complement of
the zero section in ${\bf X}_n$.
In fact,
${\bf X}_n$ is isomorphic to ${\bf X}_{-n}$ and ${\bf
X}_n^*$ to ${\bf X}_{-n}^*$, so
${\bf
X}_n$ and ${\bf X}_n^*$ do not depend on the choice
of $\varphi$, see \cite{Po2}.

The group ${\rm Pic}\big({\bf P}^2\setminus C\big)$ has
order $2$. Let ${\bf Y}_0$ and ${\bf Y}_1$ be the
total spaces of, resp., trivial and
nontrivial one-dimensional vector bundles over ${\bf
P}^2\setminus C$. Let ${\bf Y}_n^*$ be the
complement of the zero section in ${\bf Y}_n$.

Let  $\widetilde {\rm T}$, $\widetilde {\rm O}$, $\widetilde {\rm I}$,
and ${\widetilde {\rm D}}_n$ be, resp., the binary
 tetrahedral, octahedral,
icosahedral, and dihedral subgroup of order $4n$ in
${\bf SL}_2$. Put ${\bf S}_3={\bf
SL}_2/\widetilde {\rm T}$, ${\bf S}_4={\bf
SL}_2/\widetilde {\rm O}$, ${\bf S}_5={\bf
SL}_2/\widetilde {\rm I}$, and ${\bf W}_n={\bf
SL}_2/{\widetilde {\rm D}}_n$.

By \cite{Po1}
up to isomorphism irreducible
affine threefolds quasihomogeneous with respect to an algebraic group in the sense of \cite{G}
are exhausted  by the following list:
\begin{enumerate}
\item[\rm(i)] smooth threefolds:
\begin{gather}\label{smooth}
\begin{gathered}
{\bf X}_n,\;
{\bf X}_n^*,\;
{\bf W}_n,\;
{\bf Y}_0,\;
{\bf Y}_0^*,\;
{\bf Y}_1^*,\;
{\bf S}_3,\;
{\bf S}_4,\;
{\bf S}_5,\\
{\bf A}\!^3,\;
{\bf A}\!^2\times {\bf A}_*^1,\;
{\bf A}\!^1\times {\bf A}_*^2
,\;
{\bf A}_*^3;
\end{gathered}
\end{gather}
\item[\rm(ii)] singular threefolds:
\begin{gather*}
{\bf P}(A):={\bf H}_3^A\quad\mbox{where all entries of $A$ are $\geqslant 1$},\\
{\bf Q}(B):={\bf H}_{2,2}^B\quad \mbox{where $\;{\rm rk}\,B=1$}.
\end{gather*}
\end{enumerate}

By construction, ${\bf S}_3$, ${\bf S}_4$, ${\bf S}_5$, ${\bf W}_n$ are homogeneous with respect to ${\bf SL}_2$ while  ${\bf P}(A)$ and ${\bf Q}(B)$ admit an action of resp. ${\bf SL}_2\times {\bf SL}_2$ and ${\bf SL}_3$ with a dense orbit. In fact, ${\bf X}_n^*$ for $n\neq 0$ is homogeneous with respect to ${\bf SL}_2$ as well (it is the quotient of ${\bf SL}_2$ modulo a cyclic subgroup of order $|n|$). By \cite[Theorem 9]{Po2} every ${\bf X}_n$  is homogeneous with respect to the nonreductive linear
algebraic group ${\bf SL}_{2,|n|}:={\bf SL}_2\ltimes {\bf R}_{|n|}$ (see Example \ref{surf});
the radical of ${\bf SL}_{2,|n|}$ is unipotent. By \cite[Prop.\,18]{Po2}, ${\bf Y}_0$ is homogeneous with respect to
${\bf SL}_{2, d}$ for every even $d>0$.
From
Theorem \ref{prop} we then deduce that
\begin{gather*}
{\rm ML}({\bf S}_3)={\rm ML}({\bf S}_4)={\rm ML}({\bf S}_5)={\rm ML}({\bf Y}_0)=k,\\
{\rm ML}({\bf X}_n)={\rm ML}({\bf W}_n)={\rm ML}({\bf P}(A))
={\rm ML}({\bf Q}(B))=k,\\
{\rm ML}({\bf X}_n^*)=k\quad\mbox{for}\hskip 2mm n\neq 0.
\end{gather*}

As ${\bf X}_0^*=(({\bf P}^1\times {\bf P}^1)\setminus \Delta)\times {\bf A}_*^1$ and ${\bf Y}_0^*=({\bf P}^2\setminus C)\times {\bf A}_*^1$, we deduce from \eqref{dc} and Corollary \ref{c2} that
\begin{equation*}
{\rm ML}({\bf X}_0^*)= {\rm ML}({\bf Y}_0^*)=
k[{\bf A}_*^1].
\end{equation*}

 By \cite[Prop.\,16]{Po2},
 ${\bf Y}_1^*$ is homogeneous with respect to  ${\bf SL}_2\times {\bf G}_m$. Since the latter is a reductive group with  one-dimensional center, Theorem \ref{prop} implies that
 ${\rm tr\,deg}_k{\rm ML}({\bf Y}_1^*)\leqslant 1$. On the other hand, by \cite[Prop.\,19]{Po2}, $k[{\bf Y}_1^*]/k^*$ is a free abelian group of rank $1$. Since $k[X]^*\subseteq {\rm ML}(X)$ for every $X$, this yields
 \begin{equation*}
 {\rm tr\,deg}_k{\rm ML}({\bf Y}_1^*)=1.
 \end{equation*}

 Finally, \eqref{=} yields the Makar-Limanov invariants of the last four threefolds in \eqref{smooth}.
\end{example}

\begin{example}[Schubert varieties] \label{Schubert}
Let $G$ be a connected semisimple algebraic group and let ${\bf P}E$ be the projective space of $1$-dimensional linear subspaces in a nonzero simple
$G$-module $E$. There is a unique closed $G$-orbit $\mathcal O$ in ${\bf P}E$. Let $U$ be a maximal unipotent subgroup of $G$.
There are only finitely many $U$-orbits in $\mathcal O$;  their closures are called Schubert varieties, cf.,\,e.g.,\,\cite[8.3--8.5]{Sp}. Let $X\subseteq \mathcal O$ be a Schubert variety and let $\widehat X$ be the affine cone  over $X$ in $E$.
As $U$ is unipotent, Corollary \ref{cone} yields
\begin{equation*}
{\rm tr\,deg}_k{\rm ML}({\widehat X})\leqslant 1.
\end{equation*}
The ideal of $\widehat X$ in $k[E]$ is generated by certain forms of degree $\leqslant 2$ that admit an explicit description,
see, e.g.,\;\cite[2.10]{BL}.
\end{example}

\begin{example}[Not stably rational smooth affine varieties with trivial Makar\-Limanov invariant] \label{nonrational}
In \cite{Ln}  a construction of nonrational singular affine threefolds with tri\-vial Makar-\-Limanov invariant is exhibited.~Our approach yields, for every   integer $d$, examples of
%%nonrational\,---\,and even
not stably rational
(hence a fortiori nonrational)
%%\,---\,
smooth affine varieties of dimension $\geqslant d$ with trivial Makar-Limanov invariant. Here is the construction.

Let $F$ be a linear algebraic group. By \cite[Theorem 1.5.5]{Po4} the following pro\-per\-ties are equivalent:
\begin{enumerate}
\item[(a)] for every locally free  finite-dimensional algebraic $kF$-module $V$, the field $k(V)^F$ is stably rational over $k$;
    \item[(b)] there is an embedding $F\subseteq H$, where $H$ is a special group in the sense of Serre, such that
        the variety $H/F$ is stably rational.
\end{enumerate}
Here ``locally free'' means that $F$-stabilizers
of points in general position in $V$ are trivial (see \cite[1.2.2]{Po4}); for finite $F$, this is equivalent to triviality of the kernel of action.
About special groups see, e.g.,\;\cite[1.4]{Po4}, \cite[2.6]{PV2}.

Now let $F$ be a finite group  whose
Schur multiplier $H^2(F, {\bf Q}/{\bf Z})$ contains
a nonzero element $\alpha$ such that $\alpha|_{A}=0$ for
every abelian subgroup $A$ of $F$.
  It is known that such groups exist and, for every locally free finite-dimensional algebraic $kF$-module $V$, the field $k(V)^F$ is not stably rational over $k$ (see, e.g.,\;\cite{Sh}).  By  \cite[2.3.7]{Sp} we can (and shall) embed $F$ in ${\bf SL}_n$ for some $n$.~As ${\bf SL}_n$ is special
  (cf.\;\cite[1.4]{Po4}, \cite[2.6]{PV2}), the aforesaid yields that the smooth variety $X:={\bf SL}_n/F$ is not stably rational. As $F$ is reductive and ${\bf SL}_n$ is a connected algebraic group that has no nontrivial characters, Corollary~\ref{G///H}
  implies that $X$ is a not stably rational smooth affine variety with trivial Makar-Limanov invariant.
  \end{example}

\subsection{The Derksen invariant}\

Let $X$ be a variety.
Recall that the {\it Derksen invariant} $\,{\rm D}(X)$ of $X$ is the $k$-subalgebra  of $k[X]$ generated by all
$k[X]^{H}$'s where $H$
 runs over all
 subgroups of ${\rm Aut}(X)$ isomorphic to ${\bf G}_a$.
 If there are no such subgroups, we put ${\rm D}(X)=\varnothing$.

\begin{example} If $X$ is toral, then ${\rm D}(X)=\varnothing$
by Lemma \ref{toral}$({\rm c}_1)$.
\end{example}

In this section we deduce some information on ${\rm D}(X)$  in case when ${\rm Aut}(X)$ contains a connected noncommutative reductive algebraic subgroup.

Recall that if an algebraic group $G$ acts linearly on  a
(not necessarily finite-dimen\-si\-onal) $k$-vector space $V$, then the $G$-module $V$ is called {\it algebraic} if every element of $V$ is contained in an algebraic finite-dimensional $G$-submodule of $V$,
cf.,\;e.g.,\;\cite[3.4]{PV2}.

The starting point is
\begin{lemma}\label{unip}
Let $\,G$ be a connected noncommutative reductive algebraic group. Then
every algebraic $G$-module $V$ is a $k$-linear span of the set
\begin{equation}\label{unippo}
 \bigcup_{H\subset G} V^H,
\end{equation}
 where $H$ in \eqref{unippo} runs over all one-parameter unipotent subgroups of
 $\,G$.
\end{lemma}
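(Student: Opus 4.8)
The plan is to reduce to the case of a finite-dimensional simple module and then play simplicity against a fixed-point theorem for unipotent groups. First I would pass to finite dimensions: by the definition of an algebraic module, every $v\in V$ lies in a finite-dimensional algebraic $G$-submodule $V_v$, and since $G$ is reductive and ${\rm char}\,k=0$, the module $V_v$ is completely reducible, hence a sum of simple submodules. It therefore suffices to prove that each finite-dimensional simple $G$-module is spanned by the union of its fixed spaces $V^H$; indeed, if $S$ is a simple summand of $V_v$, then $S^H=S\cap V^H\subseteq V^H$, so $S$ lying in the span of its own $\bigcup_H S^H$ forces $S$, and hence $V_v$ and $v$, into the span of $\bigcup_H V^H$.

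So assume $V$ is finite-dimensional and simple, and let $W\subseteq V$ be the $k$-linear span of $\bigcup_H V^H$, the union being over all one-parameter unipotent subgroups $H$ of $G$. The key structural observation is that $W$ is a $G$-submodule: for $g\in G$ and such an $H$ one has $g\cdot V^H=V^{gHg^{-1}}$, and $gHg^{-1}$ is again a one-parameter unipotent subgroup, so $g\cdot W\subseteq W$. By simplicity of $V$, either $W=0$ or $W=V$, and the whole assertion thus reduces to showing $W\neq 0$, i.e. exhibiting a single one-parameter unipotent subgroup $H$ with $V^H\neq 0$.

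For this I would invoke the hypothesis that $G$ is noncommutative reductive. Being connected reductive, $G=Z^0(G,G)$ with $(G,G)$ semisimple, so noncommutativity means $(G,G)\neq\{e\}$ and $G$ is not a torus; consequently a maximal unipotent subgroup $U$ of $G$, namely the unipotent radical of a Borel subgroup, is nontrivial. By Kolchin's theorem every unipotent subgroup of ${\rm GL}(V)$ fixes a nonzero vector when $V$ is a nonzero finite-dimensional space, so $V^U\neq 0$. Choosing any one-parameter unipotent subgroup $H\subseteq U$ (for instance a root subgroup $U_\alpha\cong {\bf G}_a$, which exists since $U\neq\{e\}$), I obtain $V^U\subseteq V^H$, whence $V^H\neq 0$. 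This gives $W\neq 0$, so $W=V$, completing the proof.

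The main point to pin down is not any single computation but the exact role of the word \emph{noncommutative}: it is precisely this hypothesis that excludes the tori, for which there are no one-parameter unipotent subgroups at all and the statement fails, and that simultaneously produces the nontrivial maximal unipotent subgroup $U$ supplying the fixed vector. Everything else is a standard combination of complete reducibility, the $G$-stability of $W$, and Kolchin's fixed-point theorem.
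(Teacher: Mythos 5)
Your proof is correct and follows essentially the same route as the paper's: reduce by complete reducibility to a simple module, use noncommutativity to produce a nontrivial unipotent (root) subgroup, apply the Lie--Kolchin fixed-point theorem, and conclude via the conjugation identity $g\cdot V^H=V^{gHg^{-1}}$ and simplicity. The only cosmetic differences are that you show the span $W$ of the whole union is a $G$-submodule where the paper spans a single orbit $G\cdot v$, and you apply Kolchin to a maximal unipotent subgroup before descending to a one-parameter subgroup, whereas the paper applies it to the root subgroup directly.
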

\begin{proof} The assumptions that $G$ is reductive, ${\rm char}\,k=0$, and
$V$ is algebraic imply that $V$ is a sum of
simple $G$-submodules. Hence we may (and shall) assume that $V$ is a nonzero
simple $G$-module. Since $G$ is
a connected noncommutative reductive algebraic group, it contains a one-dimensional unipotent
subgroup $U$ (indeed, since
$(G, G)$
is a nontrivial semisimple group,
a root subgroup of $(G, G)$ with
respect to a maximal torus may be taken as $U$). By the Lie--Kolchin theorem $V^U\neq \{0\}$. Let $v$ be a nonzero vector of $V^U$. As $g\cdot v\in V^{gUg^{-1}}$  for every element $g\in G$, the $G$-orbit $G\cdot v$ of $v$ is contained in set \eqref{unippo}. Hence the $k$-linear span of $G\cdot v$ is contained in the $k$-linear span of this set.
But the $k$-linear span of $G\cdot v$ is $G$-stable and therefore coincides with $V$ since $V$  is simple.
 This completes the proof.
\end{proof}

\begin{theorem}\label{Dtriv}
 Let $X$ be a variety. If $\,{\rm Aut}(X)$ contains
a connected noncommutative reductive algebraic subgroup, then
\begin{equation}\label{DI}
{\rm D}(X)=k[X].
\end{equation}
\end{theorem}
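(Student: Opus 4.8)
The plan is to apply Lemma~\ref{unip} to the coordinate algebra $k[X]$, viewed as a $G$-module, where $G$ is the connected noncommutative reductive subgroup of $\mathrm{Aut}(X)$ furnished by the hypothesis. The key observation is that $k[X]$ is an \emph{algebraic} $G$-module: since $G$ acts on $X$ through an algebraic action, every regular function lies in a finite-dimensional $G$-stable subspace of $k[X]$ on which $G$ acts algebraically (this is the standard fact recalled just before Lemma~\ref{unip}, cf.~\cite[3.4]{PV2}). Thus the hypotheses of Lemma~\ref{unip} are met with $V=k[X]$.

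First I would invoke Lemma~\ref{unip} to conclude that $k[X]$ is the $k$-linear span of $\bigcup_{H\subset G} k[X]^H$, where $H$ ranges over the one-parameter unipotent subgroups of $G$. Each such $H$ is isomorphic to $\mathbf{G}_a$ and is contained in $\mathrm{Aut}(X)$, so it is precisely one of the subgroups appearing in the definition of the Derksen invariant. Hence every $k[X]^H$ with $H\subset G$ a one-parameter unipotent subgroup is one of the algebras whose union generates $\mathrm{D}(X)$, and in particular $k[X]^H\subseteq \mathrm{D}(X)$ for each such $H$.

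Combining these two facts finishes the argument: since $k[X]$ is spanned over $k$ by the union of the $k[X]^H$, and each $k[X]^H$ is contained in the subalgebra $\mathrm{D}(X)$, the $k$-linear span of that union lies in $\mathrm{D}(X)$ as well. Therefore $k[X]\subseteq \mathrm{D}(X)$. The reverse inclusion $\mathrm{D}(X)\subseteq k[X]$ is immediate from the definition, as $\mathrm{D}(X)$ is by construction a subalgebra of $k[X]$. This yields the desired equality \eqref{DI}.

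I expect no serious obstacle, since Lemma~\ref{unip} does the real work; the only point requiring a word of care is the verification that $k[X]$ is an algebraic $G$-module, which is where the hypothesis that the $G$-action on $X$ (and hence on $k[X]$) is algebraic is used. The noncommutativity hypothesis on $G$ is what guarantees, inside Lemma~\ref{unip}, the existence of a one-parameter unipotent subgroup via a root subgroup of the nontrivial semisimple group $(G,G)$; without it the span could collapse and $\mathrm{D}(X)$ could even be $\varnothing$, as the toral example shows.
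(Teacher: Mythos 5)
Your proposal is correct and follows essentially the same route as the paper: the paper's proof likewise notes that the $G$-module $k[X]$ is algebraic (citing \cite[Lemma 1.4]{PV2}) and then concludes immediately from Lemma~\ref{unip} and the definition of ${\rm D}(X)$. Your write-up merely makes explicit the intermediate steps (each one-parameter unipotent subgroup of $G$ is a copy of ${\bf G}_a$ in ${\rm Aut}(X)$, so its invariants lie in ${\rm D}(X)$, and the spanning statement then forces $k[X]\subseteq {\rm D}(X)$) that the paper leaves to the reader.
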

\begin{proof} Let $G$ be a connected noncommutative reductive algebraic subgroup of ${\rm Aut}(X)$. Since the $G$-module $k[X]$ is
algebraic (see\,\cite[Lemma 1.4]{PV2}), the claim
follows from Lemma
\ref{unip} and the definition on ${\rm D}(X)$.
\end{proof}

\begin{remark}
{\rm The following
are equivalent:
\begin{enumerate}
\item[(i)] $\,{\rm Aut}(X)$ contains
a connected noncommutative reductive algebraic subgroup;
\item[\rm(ii)] $\,{\rm Aut}(X)$ contains ${\rm SL}_2$ or
${\rm PSL}_2$.
\end{enumerate}

Indeed, ${\rm SL}_2$ and ${\rm PSL}_2$ are connected noncommutative reductive algebraic groups and every connected noncommutative reductive algebraic group contains ${\rm SL}_2$ or ${\rm PSL}_2$, cf.\;\cite[Theorem 13.18(4)]{Bo}, \cite[7.2.4]{Sp}}.
\end{remark}

The following example shows that the assumption of noncommutativity
in Theorem \ref{Dtriv} cannot be dropped.

\begin{example}\label{KRT}
By \cite{Der}, for the Koras--Russell cubic threefold $X$, the following inequality distinguishing $X$ from ${\bf A}\!^3$ holds:
\begin{equation}\label{neq}
{\rm D}(X)\neq k[X].
\end{equation}
On the other hand, since $X$
is defined
in ${\bf A}\!^4$
by
$x_1+x_1^2x_2+x_3^2+x_4^3=0$
where $x_1,\ldots, x_4$ are the standard
coordinate functions on ${\bf A}\!^4$,
it is stable with respect to the action of
$\,{\bf G}_m$ on ${\bf A}\!^4$ defined by
$t\cdot (a_1, a_2, a_3, a_4)
=(t^6a_1, t^{-6}a_2, t^3a_3, t^2a_4)$.
  Hence ${\rm Aut}(X)$ contains a one-dimensional connected commutative reductive subgroup, cf.\;\cite[Sect.\,3]{DM-JP}.  \end{example}

 One can apply Theorem \ref{Dtriv} to proving that, for some varieties $X$, there are no connected noncommutative reductive algebraic subgroups in $\,{\rm Aut}(X)$.

\begin{example} For the Koras--Russell cubic threefold $X$,
Theorem \ref{Dtriv} and \eqref{neq} imply that $\,{\rm Aut}(X)$ contains no connected noncommutative reductive algebraic subgroups. %%(a description of $\,{\rm Aut}(X)$ is given in
\end{example}

 Since
${\rm Aut}({\bf A}\!^n)$ for $n\geqslant 2$ contains a connected noncommutative reductive algebraic subgroup (for instance, ${\bf GL}_n$), the
  next corollary
   generalizes
    the well-known fact  that
${\rm D}(X\times {\bf A}\!^n)=k[X\times {\bf A}\!^n]$
for $n\geqslant 2$
(see, e.g.,\;\cite{CM}).

\begin{corollary} \label{productt}
Let $Z$ be a
variety such that ${\rm Aut}(Z)$ contains a connected noncommutative reductive algebraic subgroup. Then, for every
variety $X$,
\begin{equation}\label{ttimes}
{\rm D}(X\times Z)=k[X\times Z].
\end{equation}
\end{corollary}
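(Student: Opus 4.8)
The plan is to deduce this from Theorem~\ref{Dtriv} by exhibiting a connected noncommutative reductive algebraic subgroup inside $\mathrm{Aut}(X\times Z)$. First I would let $G$ be a connected noncommutative reductive algebraic subgroup of $\mathrm{Aut}(Z)$, which exists by hypothesis. The key observation is that any automorphism of $Z$ extends to an automorphism of $X\times Z$ acting trivially on the first factor: concretely, for $g\in G$ I would set $\widetilde{g}\colon (x,z)\mapsto (x,\,g\cdot z)$. This defines an injective homomorphism $G\hookrightarrow \mathrm{Aut}(X\times Z)$, and I would check that the resulting $G$-action on $X\times Z$ is algebraic, so that $G$ sits inside $\mathrm{Aut}(X\times Z)$ as a connected noncommutative reductive algebraic subgroup.

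Having produced such a subgroup, Theorem~\ref{Dtriv} applies verbatim to the variety $X\times Z$ and immediately yields
\begin{equation*}
\mathrm{D}(X\times Z)=k[X\times Z],
\end{equation*}
which is exactly \eqref{ttimes}. So the whole corollary reduces to the single point that the reductive subgroup of $\mathrm{Aut}(Z)$ transplants to a reductive subgroup of $\mathrm{Aut}(X\times Z)$.

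The main (and essentially only) obstacle is verifying that this transplanted action is \emph{algebraic} in the sense required by the earlier results, rather than merely an abstract group action by automorphisms. Here I would use that the $G$-action on $Z$ is algebraic together with the identification $k[X\times Z]=k[X]\otimes_k k[Z]$ recorded in the Conventions: the diagonal-type action (trivial on $X$, the given action on $Z$) has comorphism $k[X]\otimes_k k[Z]\to k[X]\otimes_k k[Z]$ sending $f\otimes h\mapsto f\otimes(g\cdot h)$, and since $G$ acts algebraically on $k[Z]$ every element lies in a finite-dimensional $G$-submodule, whence the same holds after tensoring with $k[X]$. This confirms algebraicity of the product action and completes the argument. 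Everything else is formal, so I expect no further difficulty.
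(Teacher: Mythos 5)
Your proof is correct and follows the same route as the paper: extend the given reductive subgroup $G\subseteq{\rm Aut}(Z)$ to ${\rm Aut}(X\times Z)$ via the product action that is trivial on $X$, then apply Theorem~\ref{Dtriv}. The only difference is that you spell out the algebraicity check (via the comorphism on $k[X]\otimes_k k[Z]$), which the paper leaves implicit.
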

\begin{proof} Consider the natural action of
${\rm Aut}(Z)$ on $Z$ and its trivial action on $X$.
Then the diagonal action of ${\rm Aut}(Z)$ on $X\times Z$
identifies ${\rm Aut}(Z)$ with a subgroup
of ${\rm Aut}(X\times Z)$. Whence the claim by Theorem \ref{Dtriv}.
\end{proof}

The following example shows that
the assumption of noncommutativity in Corollary \ref{productt}
cannot be dropped.

\begin{example} \label{aa}
Let $x_1, x_2$ be the standard coordinate functions on ${\bf A}\!^2$.
The principal open set $Y$
in ${\bf A}\!^2$ defined by $x_1\neq 0$ is isomorphic
to ${\bf A}\!^1_*\times {\bf A}\!^1$
and
\begin{equation}\label{ts}
k[Y]=k[t, t^{-1}, s], \qquad\mbox{where}\quad t:=x_1|_Y,\; s:=x_2|_Y.
\end{equation}

Since
$t$
is the unit of $k[Y]$, for every
action of ${\bf G}_a$ on $Y$ we have
\begin{equation}\label{orbiii}
t, t^{-1}\in k[Y]^{{\bf G}_a}.
\end{equation}
As, clearly, ${\rm Aut}(Y)$ contains a one-dimensional unipotent subgroup, \eqref{orbiii} and the de\-fi\-nition of ${\rm D}(Y)$ yield
 $k[t, t^{-1}]\subseteq {\rm D}(Y)$.
  We also deduce from \eqref{orbiii} that,
 for every point $y\in Y$, the ${\bf G}_a$-orbit
 of $y$ lies in the line
  defined by the equation $t=t(y)$.
 But this orbit
is closed in $Y$ since $Y$ is affine and ${\bf G}_a$ is unipotent, cf.\;\cite[4.10]{Bo}. Hence,
if $y$ is not a fixed point, this orbit coincides with the aforementioned line.
Therefore, if ${\bf G}_a$ acts on $Y$ nontrivially,
$t$ separates orbits in general position. Since ${\rm char}\,k=0$, by
\cite[Lemma 2.1]{PV2} this means that
$k(Y)^{{\bf G}_a}=k(t)$. Whence by \eqref{ts} we have $k[Y]^{{\bf G}_a}=k[t, t^{-1}]$. From this, \eqref{=} and \eqref{ts} we then infer that
\begin{equation*}
k[t, t^{-1}]={\rm ML}({\bf A}\!^1_*\times {\bf A}\!^1)={\rm D}({\bf A}\!^1_*\times {\bf A}\!^1)\varsubsetneq k[{\bf A}\!^1_*\times {\bf A}\!^1]=k[t, t^{-1}, s].
\end{equation*}

Thus, \eqref{ttimes}
 does not hold for $X={\bf A}\!^1_*$, $Z={\bf A}\!^1$ while both ${\rm Aut}({\bf A}\!^1_*)$ and
${\rm Aut}({\bf A}\!^1)$
contain a one-dimensional connected commutative reductive algebraic subgroup.
\end{example}

\begin{theorem} \label{times}
If $\,X_i$ is a variety
such that
$\,{\rm ML}(X_i)\neq k[X_i]$, $i=1, 2$,
then
$$
{\rm D}(X_1\times X_2)=k[X_1\times X_2].
$$
\end{theorem}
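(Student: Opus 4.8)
The plan is to exploit the hypothesis exactly as in the proof of Lemma~\ref{otimes}, by transporting a nontrivial ${\bf G}_a$-action on each factor to the product and observing that its fixed ring already contains all of the coordinate ring of the other factor. The result is then immediate from the fact that $k[X_1]$ and $k[X_2]$ together generate $k[X_1\times X_2]$.

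First I would unpack the hypothesis. Since ${\rm ML}(X_i)=\bigcap_H k[X_i]^H\neq k[X_i]$, not every member of the defining family can satisfy $k[X_i]^H=k[X_i]$, so there is a subgroup $H_i\subseteq {\rm Aut}(X_i)$ with $k[X_i]^{H_i}\neq k[X_i]$; in particular $H_i$ cannot be trivial. As ${\rm char}\,k=0$, the kernel of a homomorphism ${\bf G}_a\to {\rm Aut}(X_i)$ is a closed subgroup of ${\bf G}_a$, hence is $\{0\}$ or all of ${\bf G}_a$; a nontrivial such homomorphism is therefore injective and its image is isomorphic to ${\bf G}_a$. Thus $H_i\cong {\bf G}_a$ and we obtain a genuine ${\bf G}_a$-action $\alpha_i$ on $X_i$. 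Note that only the existence of $H_i$, and not the size of $k[X_i]^{H_i}$, will be used.

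Next I would extend $\alpha_1$ to $X_1\times X_2$ by letting it act via $\alpha_1$ on the first factor and trivially on the second, exactly the diagonal action considered in the proof of Lemma~\ref{otimes}. This defines a homomorphism ${\bf G}_a\to {\rm Aut}(X_1\times X_2)$ whose kernel lies in the kernel of $\alpha_1$, hence is trivial; its image $\widetilde{H}_1\subseteq {\rm Aut}(X_1\times X_2)$ is therefore isomorphic to ${\bf G}_a$ and qualifies as one of the subgroups defining ${\rm D}(X_1\times X_2)$. Since $\widetilde{H}_1$ fixes $X_2$ pointwise, every function in $k[X_2]\subseteq k[X_1\times X_2]$ is $\widetilde{H}_1$-invariant, so
\begin{equation*}
k[X_2]\subseteq k[X_1\times X_2]^{\widetilde{H}_1}\subseteq {\rm D}(X_1\times X_2).
\end{equation*}
Running the symmetric argument with $\alpha_2$ acting on the second factor and trivially on the first yields likewise $k[X_1]\subseteq {\rm D}(X_1\times X_2)$.

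Finally I would combine the two inclusions: ${\rm D}(X_1\times X_2)$ is a $k$-subalgebra of $k[X_1\times X_2]$ containing both $k[X_1]$ and $k[X_2]$, and by the conventions $k[X_1\times X_2]$ is generated by $k[X_1]$ and $k[X_2]$; hence ${\rm D}(X_1\times X_2)=k[X_1\times X_2]$. There is no serious obstacle here: the only point demanding care is the first step, namely reading a bona fide ${\bf G}_a$-subgroup out of the inequality ${\rm ML}(X_i)\neq k[X_i]$ and confirming that its trivial extension to the product remains isomorphic to ${\bf G}_a$, so that it genuinely contributes to the Derksen invariant rather than collapsing to the identity.
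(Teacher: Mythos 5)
Your proposal is correct and follows essentially the same route as the paper's own proof: extract a nontrivial ${\bf G}_a$-action on each $X_i$ from ${\rm ML}(X_i)\neq k[X_i]$, extend it diagonally to $X_1\times X_2$ acting trivially on the other factor, conclude $k[X_1], k[X_2]\subseteq {\rm D}(X_1\times X_2)$, and finish because these generate $k[X_1\times X_2]$. Your extra care in checking that a nontrivial homomorphism ${\bf G}_a\to{\rm Aut}(X_i)$ is injective (so its image is genuinely isomorphic to ${\bf G}_a$ and counts toward the Derksen invariant) is a detail the paper leaves implicit, but the argument is the same.
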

\begin{proof}
As  $\,{\rm ML}(X_1)\neq k[X_1]$, there is a nontrivial
${\bf G}_a$-action $\alpha$ on $X_1$. The diagonal ${\bf G}_a$-action on $X_1\times X_2$ determined by $\alpha$ and trivial action on $X_2$
is a nontrivial ${\bf G}_a$-action for which
$k[X_2]$ lies in the algebra of invariants.
Hence, $k[X_2]\subseteq {\rm D}(X_1\times X_2)$. Similarly,
$k[X_1]\subseteq {\rm D}(X_1\times X_2)$. As $k[X_1\times X_2]$ is ge\-ne\-rated by $k[X_1]$ and  $k[X_2]$, the claim follows.
\end{proof}

\begin{example} If $X$ is the  Koras--Russell cubic threefold $X$, then
${\rm D}(X)\neq k[X]$ by \cite{Der}. But for the square of $X$ we have ${\rm D}(X\times X)=k[X\times X]$\,---\,
 since ${\rm ML}(X)=k[x_1|^{\ }_X]\neq k[X]$ (cf.,\;e.g.,\;\cite[Chap.\,9]{F}),
this follows from Theorem
\ref{times}.

\end{example}

\subsection{Generalizations}\

 The
Makar-Limanov and Derksen invariants can be naturally generalized.

Namely, let $X$ be a variety and let $F$ be an algebraic group.

\begin{definition}\label{Kerrrrr}
The {\it $F$-kernel\,} of $X$ is the following $k$-subalgebra of $k[X]$:
\begin{equation}\label{Ker}
{\rm Ker}_F^{}(X):= \bigcap_{H}k[X]^{H},
\end{equation}
where $H$ in \eqref{Ker} runs over the images of all algebraic homomorphisms
$F\to {\rm Aut}(X)$.
\end{definition}
\begin{definition} \label{Envvv} The $F$-{\it envelope} of $X$ is the $k$-subalgebra $${\rm Env}^{}_F(X)$$ of $k[X]$ generated by all $k[X]^H$'s where $H$ runs over all subgroups of ${\rm Aut}(X)$
isomorphic to $F$. If there are no such subgroups, we put ${\rm Env}^{}_F(X)=\varnothing$.
\end{definition}

\begin{example} %%Definitions \ref{Kerrrrr} and \ref{Envvv}
 The definitions imply that
\begin{equation*}
\qquad
{\rm Ker}_{{\bf G}_a}^{}(X)={\rm ML}(X),
\qquad
{\rm Env}_{{\bf G}_a}^{}(X)={\rm D}(X).
\end{equation*}
\end{example}

\begin{definition} \label{FFF} We say that an algebraic group $G$ is $F$-{\it generated} if it is generated by the images of all homomorphisms $F\to G$.
\end{definition}

\begin{examples}\label{Gm} (1) By Lemma \ref{GGG} a connected linear algebraic group $G$ is ${\bf G}_a$-generated if and only if $G$ has no nontrivial characters that, in turn, is equivalent to the condition ${\rm Rad}\,G= {\rm Rad}_uG$.

(2) Every connected reductive algebraic group $G$ is ${\bf G}_m$-generated. This is clear if $G$ is a torus.
%%In the
The general case
%%this
follows from the case of torus because of the following two facts:
%%that
 (a) the subgroup of $G$ generated by connected algebraic subgroups is closed (see,\,e.g.,\,\cite[2.2.7]{Sp}); and (b) the union of maximal tori of $G$ contains a dense open subset of $G$ (\cite[6.4.5(iii), 7.6.4(ii)]{Sp}).

(3) Clearly, the subgroup generated by the images of all homomorphisms $F\to G$ is normal. Hence, if
 $G$ is simple as abstract group and
 there exists a nontrivial homomorphism $F\to G$, then $G$ is $F$-generated.
\end{examples}

The following are the generalizations of the above statements on ${\rm ML}(X)$ and ${\rm D}(X)$.

\begin{theorem}\label{gFsubset} If a variety $X$ is endowed with an action of
an $F$-generated algebraic
group $G$, then
$
{\rm Ker}^{}_F(X)\subseteq k[X]^G.
$
\end{theorem}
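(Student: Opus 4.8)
The plan is to mimic the proof of Theorem~\ref{MLsubset}, replacing the generation of $G$ by one-parameter unipotent subgroups (supplied there by Lemma~\ref{GGG}) with the generation built directly into the hypothesis that $G$ is $F$-generated.

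First I would fix the algebraic homomorphism $\psi\colon G\to{\rm Aut}(X)$ determined by the $G$-action, so that $k[X]^G=k[X]^{\psi(G)}$. For each algebraic homomorphism $\varphi\colon F\to G$, the composition $\psi\circ\varphi\colon F\to{\rm Aut}(X)$ is again algebraic---the $F$-action it defines is simply the $G$-action pulled back along $\varphi$---so its image $\psi(\varphi(F))$ is one of the subgroups $H$ occurring in the intersection \eqref{Ker} of Definition~\ref{Kerrrrr}. Hence ${\rm Ker}^{}_F(X)\subseteq k[X]^{\psi(\varphi(F))}$ for every such $\varphi$, and intersecting over all $\varphi$ yields
$$
{\rm Ker}^{}_F(X)\subseteq\bigcap_{\varphi}k[X]^{\psi(\varphi(F))}=k[X]^{\langle\,\psi(\varphi(F))\,:\,\varphi\,\rangle}.
$$

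The last step is to identify the group on the right with $\psi(G)$. As $\psi$ is a group homomorphism, it carries the subgroup generated by the $\varphi(F)$ onto the subgroup generated by their images, so $\langle\,\psi(\varphi(F)):\varphi\,\rangle=\psi(\langle\,\varphi(F):\varphi\,\rangle)$; and because $G$ is $F$-generated (Definition~\ref{FFF}), the subgroup $\langle\,\varphi(F):\varphi\,\rangle$ is all of $G$. Therefore the right-hand side equals $k[X]^{\psi(G)}=k[X]^G$, which is the asserted inclusion.

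I do not expect any substantial obstacle. The single elementary point to record carefully is that a regular function fixed by each member of a family of subgroups is fixed by the subgroup they generate: invariance under each $\psi(\varphi(F))$ propagates to any finite product of elements of these subgroups, and such products exhaust $\psi(G)$. This is precisely the abstract mechanism underlying Theorem~\ref{MLsubset}, here isolated in the notion of $F$-generation.
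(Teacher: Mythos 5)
Your proof is correct and follows exactly the route the paper intends: the paper's own proof is the one-line remark that the theorem ``follows from Definitions \ref{Kerrrrr} and \ref{FFF}'', and your argument is precisely the careful unpacking of that remark (composing each homomorphism $F\to G$ with the algebraic homomorphism $G\to{\rm Aut}(X)$, intersecting the invariants, and using $F$-generation to pass from the generating images to all of $\psi(G)$). No gaps; you have simply made explicit what the paper leaves to the reader.
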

\begin{proof}
This follows from Definitions \ref{Kerrrrr} and \ref{FFF}.
\end{proof}

\begin{corollary}\label{FGFG}
If a variety $X$ is endowed with an action of
an $F$-generated algebraic
group $G$ and $X$ contains a dense $G$-orbit, then
${\rm Ker}^{}_F(X)=k$.
\end{corollary}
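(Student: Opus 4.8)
The plan is to combine Theorem \ref{gFsubset} with the standard fact that an invariant function must be constant on a dense orbit. Since $G$ is $F$-generated, Theorem \ref{gFsubset} immediately gives the inclusion ${\rm Ker}^{}_F(X)\subseteq k[X]^G$, so it suffices to show that $k[X]^G=k$ under the hypothesis that $X$ contains a dense $G$-orbit. This reduces the corollary to a statement purely about $G$-invariant regular functions, with $F$ no longer playing any role beyond having supplied the inclusion.

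To see that $k[X]^G=k$, let $X_0=G\cdot x$ be the dense $G$-orbit and take any $f\in k[X]^G$. First I would restrict $f$ to $X_0$: since $f$ is $G$-invariant and $G$ acts transitively on $X_0$, the function $f|_{X_0}$ is constant, equal to some $c=f(x)\in k$. Thus $f-c$ vanishes identically on $X_0$. Because $X_0$ is dense in $X$ and $f-c$ is a regular (hence continuous) function, $f-c$ vanishes on the closure $\overline{X_0}=X$, so $f=c\in k$. This shows $k[X]^G=k$, and combined with the inclusion above yields ${\rm Ker}^{}_F(X)\subseteq k$; since constants are always invariant, the reverse inclusion $k\subseteq {\rm Ker}^{}_F(X)$ is automatic, giving equality.

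I do not anticipate a serious obstacle here, as this is the $F$-generalization of the reasoning already used to pass from Theorem \ref{MLsubset} to Corollary \ref{homom} (indeed the parenthetical remark preceding Corollary \ref{homom}, that there are no nonconstant invariant functions on orbit closures, is exactly the fact I am invoking). The only point that deserves a moment's care is the density argument: one must note that $X$ need not be irreducible a priori, but the existence of a dense orbit $X_0$ forces $X=\overline{X_0}$ to be irreducible, so the vanishing of $f-c$ on the dense set $X_0$ does propagate to all of $X$. With that observation the proof is a short two-line deduction.

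\begin{proof}
By Theorem \ref{gFsubset}, ${\rm Ker}^{}_F(X)\subseteq k[X]^G$, so it suffices to prove $k[X]^G=k$. Let $X_0$ be the dense $G$-orbit and fix $f\in k[X]^G$. As $G$ acts transitively on $X_0$ and $f$ is $G$-invariant, $f$ takes a constant value $c\in k$ on $X_0$. Hence $f-c$ vanishes on $X_0$; since $X_0$ is dense and $f-c$ is regular, $f-c$ vanishes on $X=\overline{X_0}$, i.e. $f=c\in k$. Thus $k[X]^G=k$, whence ${\rm Ker}^{}_F(X)=k$.
\end{proof}
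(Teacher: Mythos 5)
Your proof is correct and matches the paper's (implicit) argument: the paper derives this corollary from Theorem \ref{gFsubset} together with the same observation it records before Corollary \ref{homom}, namely that there are no nonconstant invariant functions on orbit closures. Your explicit density argument, including the remark that $X=\overline{X_0}$, just spells out that standard fact.
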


\begin{corollary}
If $H$ is a reductive subgroup of an $F$-generated linear algebraic group $G$, then $G/H$ is an affine variety with
$
{\rm Ker}^{}_F(G/H)=k.
$
\end{corollary}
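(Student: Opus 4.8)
The plan is to reduce the statement to Corollary \ref{FGFG}, which already performs the $F$-kernel computation, combined with the standard affineness criterion for a homogeneous space modulo a reductive subgroup. This is simply the $F$-generalization of Corollary \ref{G///H}: there the hypothesis ``$G$ has no nontrivial characters'' means precisely that $G$ is ${\bf G}_a$-generated (Examples \ref{Gm}(1)) and ${\rm ML}={\rm Ker}^{}_{{\bf G}_a}$, so replacing ${\bf G}_a$ by $F$ throughout should give the result with the same structure of proof.

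First I would settle affineness. Since $H$ is a reductive subgroup of the linear algebraic group $G$ and ${\rm char}\,k=0$, the homogeneous space $G/H$ is an affine variety. This is exactly the input used in the proof of Corollary \ref{G///H}, namely \cite[Theorem 6.8]{Bo} together with \cite[Theorem 4.9]{PV2}; it requires nothing about $F$. In particular $k[G/H]$ is a finitely generated $k$-algebra, so that ${\rm Ker}^{}_F(G/H)$ is well defined.

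Next I would record the geometric fact that makes Corollary \ref{FGFG} applicable: $G$ acts on $G/H$ by left translations, and this action is transitive, so $G/H$ is a single $G$-orbit. In particular it contains a dense $G$-orbit, namely itself, and consequently $k[G/H]^G=k$ (an invariant regular function is constant on the only orbit). Since $G$ is $F$-generated by hypothesis, Corollary \ref{FGFG} now applies verbatim and yields ${\rm Ker}^{}_F(G/H)=k$, which together with the affineness just established completes the argument.

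I do not anticipate a genuine obstacle here. The only nonformal ingredient is the affineness of $G/H$, which is a cited theorem (Matsushima's criterion in characteristic zero), and everything else is an immediate specialization of Corollary \ref{FGFG}, whose own proof combines Theorem \ref{gFsubset} with the vanishing of nonconstant invariants on a variety carrying a dense orbit. Notably, connectedness of $G$ is never needed: transitivity already gives that the whole space is one orbit, so the $F$-kernel collapses to $k$ regardless of whether $G$ or $G/H$ is connected.
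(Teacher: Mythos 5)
Your proposal is correct and is exactly the argument the paper intends: the paper states this corollary without proof as the $F$-analogue of Corollary \ref{G///H}, whose proof is precisely your two steps (transitivity of $G$ on $G/H$ plus Corollary \ref{FGFG} for the kernel computation, and Matsushima's criterion via \cite[Theorem 6.8]{Bo} and \cite[Theorem 4.9]{PV2} for affineness). Your observation that connectedness of $G$ is not needed is also consistent with the paper, since Theorem \ref{gFsubset} and Corollary \ref{FGFG} are stated for arbitrary $F$-generated algebraic groups.
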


\begin{corollary}\label{conenen} Let $X$ be an irreducible variety. If there is an action of $\,{\bf G}_m$ on $X$ with a fixed point and without other closed orbits, then
%%$X$ is affine and
\begin{equation}\label{GmGmGm}
{\rm Ker}^{}_{{\bf G}_m}(X)=k.
\end{equation}
%%If, moreover, $X$ is normal, then $X$ is affine.
\end{corollary}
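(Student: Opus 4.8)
The plan is to reduce everything to the invariant ring of the single given ${\bf G}_m$-action. By Definition \ref{Kerrrrr}, ${\rm Ker}^{}_{{\bf G}_m}(X)$ is the intersection of the algebras $k[X]^H$ over the images $H$ of all homomorphisms ${\bf G}_m\to{\rm Aut}(X)$. The hypothesized action furnishes one such homomorphism, whose image $H_0$ satisfies $k[X]^{H_0}=k[X]^{{\bf G}_m}$ for this action; hence ${\rm Ker}^{}_{{\bf G}_m}(X)\subseteq k[X]^{{\bf G}_m}$. Since the constants are always invariant, $k\subseteq {\rm Ker}^{}_{{\bf G}_m}(X)$, so it suffices to prove $k[X]^{{\bf G}_m}=k$ for the given action.

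First I would analyze the orbit structure. A one-point orbit is automatically closed, so every fixed point is a closed orbit; by hypothesis the only closed orbit is the given fixed point $x_0$, whence $x_0$ is the \emph{unique} fixed point. Consequently every orbit other than $\{x_0\}$ is one-dimensional (isomorphic to ${\bf G}_m$) and, not being closed, differs from its closure. By the standard closure theory of algebraic group actions (cf.\ \cite{Bo}), the boundary $\overline{{\bf G}_m\cdot x}\setminus {\bf G}_m\cdot x$ is a nonempty closed ${\bf G}_m$-stable set of strictly smaller dimension, hence a nonempty union of fixed points. Since $x_0$ is the only fixed point, this forces $x_0\in\overline{{\bf G}_m\cdot x}$ for every non-fixed point $x$.

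The final step is a continuity argument. Take any $f\in k[X]^{{\bf G}_m}$ and set $c:=f(x_0)$. For a non-fixed point $x$, the regular function $f-f(x)$ vanishes identically on ${\bf G}_m\cdot x$; since this orbit is dense in its irreducible closure, $f-f(x)$ vanishes on $\overline{{\bf G}_m\cdot x}$, and in particular at $x_0$, giving $f(x)=f(x_0)=c$. As this holds at every point of $X$ (the fixed point included), $f-c$ vanishes everywhere, so $f=c\in k$. Thus $k[X]^{{\bf G}_m}=k$, which by the reduction above yields \eqref{GmGmGm}.

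The main obstacle I anticipate is the boundary claim\,---\,that the closure of each nontrivial orbit really does contain $x_0$. This is the one genuinely geometric input, combining the closure theorem for orbits (the boundary is a union of strictly lower-dimensional orbits) with the hypothesis that $\{x_0\}$ is the only closed orbit. Once that is secured, the remainder is the short density/continuity computation above, and irreducibility enters only through the irreducibility of the individual orbit closures.
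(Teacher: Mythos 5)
Your proof is correct and follows essentially the same route as the paper: the hypotheses force the fixed point to be unique and to lie in the closure of every ${\bf G}_m$-orbit (since a non-closed orbit's boundary consists of fixed points), whence $k[X]^{{\bf G}_m}=k$, and then \eqref{GmGmGm} follows from Definition \ref{Kerrrrr}. The paper states this in one line; you have merely supplied the standard orbit-closure details, so there is no substantive difference.
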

\begin{proof} The assumptions imply that the fixed point is unique and lies in the closure of every ${\bf G}_m$-orbit; whence $k[X]^{{\bf G}_m}=k$. In turn, this and \eqref{Ker} yield \eqref{GmGmGm}. %%Note that, in fact, $X$ is affine \cite{Po5}.
\end{proof}
\begin{remark} If $X$ in Corollary \ref{conenen} is normal, then by \cite{Po5}  it is affine.
\end{remark}

\begin{corollary} Let $X$ be a closed subset of $\,{\bf P}^n$ and let
$\,{\widehat X}\subseteq k^{n+1}$ be the affine cone over $X$.
Then ${\rm Ker}^{}_{{\bf G}_m}({\widehat X})=k$.
\end{corollary}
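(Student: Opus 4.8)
The plan is to exhibit a single ${\bf G}_m$-action on $\widehat X$ whose ring of invariants is already trivial. Since ${\rm Ker}_{{\bf G}_m}(\widehat X)$ is, by \eqref{Ker}, the intersection of the invariant rings $k[\widehat X]^H$ over the images $H$ of all homomorphisms ${\bf G}_m\to{\rm Aut}(\widehat X)$, producing one such action with $k[\widehat X]^{{\bf G}_m}=k$ immediately forces ${\rm Ker}_{{\bf G}_m}(\widehat X)\subseteq k$, and the reverse inclusion is automatic. The natural candidate is the scalar action $t\cdot v=tv$ on $\widehat X\subseteq k^{n+1}$, which is well defined precisely because $\widehat X$ is a cone, i.e.\ its ideal in $k[x_0,\ldots,x_n]$ is homogeneous.

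To see that this action has trivial invariants, I would pass to gradings. The scalar ${\bf G}_m$-action corresponds to the standard ${\bf Z}$-grading of $k[\widehat X]=k[x_0,\ldots,x_n]/I$ by total degree: a homogeneous element of degree $d$ is scaled by $t^{-d}$, so the ${\bf G}_m$-invariants are exactly the degree-$0$ component. As $I$ is homogeneous, this component is $k$, whence $k[\widehat X]^{{\bf G}_m}=k$ and the corollary follows. Equivalently, one can argue geometrically in the spirit of Corollary \ref{conenen}: under scaling the origin is a fixed point, every other orbit is a punctured line through the origin, and the closure of each such orbit contains the origin; hence any invariant regular function is constant on each orbit and, by continuity, equals its value at the origin throughout $\widehat X$, so it is constant.

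The only point needing care, and the reason this is not a verbatim instance of Corollary \ref{conenen}, is that $X$ and hence $\widehat X$ are not assumed irreducible, so the direct appeal to that corollary is licensed only in the irreducible case. I expect this to be the main (and very minor) obstacle. Both arguments above dispose of it without extra work: the grading computation never invokes irreducibility, and the continuity argument applies unchanged to a reducible cone, since the unique fixed point $0$ lies in the closure of every orbit no matter how many components $\widehat X$ has.
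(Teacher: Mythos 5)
Your proof is correct and is essentially the paper's own argument: the corollary is meant to follow from Corollary \ref{conenen} by taking the scalar ${\bf G}_m$-action on $\widehat X$, whose unique fixed point (the origin) lies in the closure of every orbit, forcing $k[\widehat X]^{{\bf G}_m}=k$ and hence, by \eqref{Ker}, ${\rm Ker}^{}_{{\bf G}_m}(\widehat X)=k$. Your observation that irreducibility is never actually used in that argument (so the reducible case goes through verbatim, whether via the grading or the orbit-closure reasoning) correctly disposes of the only mismatch in hypotheses.
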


\begin{example}\label{23}  Consider the case $F={\bf G}_m$. If $G$ is a connected reductive subgroup of
${\rm Aut}(X)$ and $X$ contains a dense $G$-orbit, then
Corollary \ref{FGFG} and Example \ref{Gm}(2) imply that \eqref{GmGmGm} holds.
In particular, this is so
for every toric variety $X$; for instance,
\begin{equation*}
{\rm Ker}^{}_{{\bf G}_m}({\bf A}\!^n\times {\bf A}_*^m)=k.
\end{equation*}
(compare with \eqref{=}). Applying this to the varieties considered in
Examples \ref{adj}--\ref{threef}, we see that \eqref{GmGmGm} holds
for every $X$ from the following list:
\begin{gather*}
{\rm Lie}(G)(\alpha_1,\ldots,\alpha_n)\hskip 2mm \mbox{
(see Example \ref{adj})};\\
D_{n,m,r}\hskip 2mm \mbox{(see Example \ref{determ})};\\
X(\lambda_1,\ldots,\lambda_s)\hskip 2mm \mbox{(see Example \ref{S})};\\
({\bf P}^1\times {\bf P}^1)\setminus\Delta,\;\;  {\bf P}^2\setminus C,\;\;  {\bf V}(n_1,\ldots,n_r)\;\; \mbox{where $n_1,\ldots, n_r\geqslant 2$} \hskip 2mm \mbox{(see Example \ref{surf})};\\
{\bf S}_3,\;\;  {\bf S}_4,\;\;  {\bf S}_5,\;\;
{\bf W}_n,\;\;  {\bf P}(A),\;\;  {\bf Q}(B),\;\;
{\bf X}_n^*\;\; \mbox{where $n\neq 0$},\;\;  {\bf Y}^*_1 \hskip 2mm \mbox{(see Example \ref{threef})}.
\end{gather*}

The threefold ${\bf X}_n$ from Example \ref{threef} is homogeneous with respect to ${\bf SL}_{2,|n|}$.
One can prove that
${\bf SL}_{2,|n|}$ is
${\bf G}_m$-generated; whence ${\rm Ker}({\bf X}_n)=k$.

The remaining threefolds ${\bf X}_0^*$, ${\bf Y}_0$, and ${\bf Y}^*_0$ from Example \ref{threef} are considered in Example \ref{remai} below.
\end{example}

The same proof as that of Lemma \ref{otimes} yields
\begin{lemma}\label{ootimes} For any varieties $X_1$
and $X_2$,
\begin{equation*}
{\rm Ker}^{}_F(X_1\times X_2)\subseteq {\rm Ker}^{}_F(X_1)
\otimes_k {\rm Ker}^{}_F(X_2).
\end{equation*}
\end{lemma}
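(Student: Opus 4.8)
The plan is to imitate, essentially verbatim, the proof of Lemma \ref{otimes}, replacing the specific group ${\bf G}_a$ by the arbitrary algebraic group $F$ and the invariant ${\rm ML}$ by ${\rm Ker}^{}_F$. The statement literally says ``the same proof,'' so the task is to check that nothing in the argument for Lemma \ref{otimes} used any feature peculiar to ${\bf G}_a$ beyond the fact that it is an algebraic group acting on a variety, and that one may form a diagonal action on a product that is trivial on one factor.

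Concretely, I would take $f\in {\rm Ker}^{}_F(X_1\times X_2)$ and, using that $k[X_1\times X_2]=k[X_1]\otimes_k k[X_2]$, write $f=\sum_{i=1}^n s_i t_i$ with $s_i\in k[X_1]$, $t_i\in k[X_2]$, where the $t_i$ are chosen $k$-linearly independent, hence also linearly independent over $k[X_1]$. For any algebraic homomorphism $\varphi\colon F\to {\rm Aut}(X_1)$, the associated diagonal $F$-action on $X_1\times X_2$ (via $\varphi$ on the first factor, trivially on the second) fixes $k[X_2]$ pointwise and stabilizes $k[X_1]$. Since $f\in {\rm Ker}^{}_F(X_1\times X_2)$, it is fixed by the image of this homomorphism, so for every $g$ in that image one has $\sum_i s_i t_i=\sum_i (g\cdot s_i)t_i$; linear independence of the $t_i$ over $k[X_1]$ forces each $s_i$ to be fixed by every such $g$. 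As $\varphi$ ranges over all homomorphisms $F\to {\rm Aut}(X_1)$, definition \eqref{Ker} gives $s_i\in {\rm Ker}^{}_F(X_1)$. Rewriting $f=\sum_{j=1}^m s_j' t_j'$ with $s_j'\in {\rm Ker}^{}_F(X_1)$ linearly independent over $k$ and applying the symmetric argument on the $X_2$ side yields $t_j'\in {\rm Ker}^{}_F(X_2)$, which gives the asserted inclusion.

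The one point deserving a moment's care is the diagonal-action construction: in Lemma \ref{otimes} the acting group was ${\bf G}_a$, and the argument invoked a single generic group element $g\in {\bf G}_a$; here $F$ is arbitrary, so I replace ``for every $g\in {\bf G}_a$'' by ``for every $g$ in the image $H$ of a homomorphism $F\to {\rm Aut}(X_1)$,'' which is exactly the index set appearing in \eqref{Ker}. Since invariance under all of $H$ is invariance under each of its elements, the same linear-algebra step goes through unchanged. I do not expect any genuine obstacle; the only thing to verify is that the diagonal $F$-action on $X_1\times X_2$ is again an algebraic action (so that its image is a legitimate $H$ in \eqref{Ker}), which is immediate because the $F$-action on $X_1$ is algebraic and the action on $X_2$ is trivial.

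Accordingly the proof is a single sentence: the argument of Lemma \ref{otimes} applies mutatis mutandis, with ${\bf G}_a$ replaced by $F$ throughout, using that $k[X_1\times X_2]=k[X_1]\otimes_k k[X_2]$ and the diagonal $F$-action that is trivial on the second (resp.\ first) factor.

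\begin{proof}
The argument is identical to that of Lemma \ref{otimes}, with ${\bf G}_a$ replaced by $F$ everywhere. Take $f\in {\rm Ker}^{}_F(X_1\times X_2)$ and, using $k[X_1\times X_2]=k[X_1]\otimes_k k[X_2]$, write $f=\sum_{i=1}^n s_i t_i$ with $s_i\in k[X_1]$ and $t_i\in k[X_2]$, where $t_1,\ldots,t_n$ are linearly independent over $k$ and hence over $k[X_1]$. For any algebraic homomorphism $\varphi\colon F\to{\rm Aut}(X_1)$, consider the diagonal action of $F$ on $X_1\times X_2$ determined by $\varphi$ and the trivial action on $X_2$; this is again algebraic, $k[X_2]$ is pointwise fixed, and $k[X_1]$ is stable. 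Since $f$ is invariant under the image $H$ of this homomorphism, for every $g\in H$ we have $\sum_i s_i t_i=\sum_i (g\cdot s_i) t_i$, so linear independence of the $t_i$ over $k[X_1]$ forces each $s_i$ to be $H$-invariant. As $\varphi$ is arbitrary, \eqref{Ker} gives $s_1,\ldots,s_n\in{\rm Ker}^{}_F(X_1)$, so $f=\sum_{j=1}^m s_j' t_j'$ with $s_j'\in{\rm Ker}^{}_F(X_1)$ linearly independent over $k$ and $t_j'\in k[X_2]$. The same argument applied to the second factor yields $t_j'\in{\rm Ker}^{}_F(X_2)$, whence the asserted inclusion.
\end{proof}
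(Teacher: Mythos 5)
Your proof is correct and is exactly what the paper intends: the paper's entire ``proof'' of Lemma~\ref{ootimes} is the remark that the same argument as for Lemma~\ref{otimes} applies, and your write-up carries out that substitution of $F$ for ${\bf G}_a$ faithfully, including the one point that needs checking (that the diagonal action is algebraic, so its image is a legitimate $H$ in \eqref{Ker}).
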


\begin{corollary} \label{cc1}
For any varieties $X_1$ and $X_2$, the following are equivalent:
\begin{enumerate}
\item[\rm(i)] ${\rm Ker}^{}_F(X_1)$ and $\,{\rm Ker}^{}_F(X_2)$ lie in ${\rm Ker}^{}_F(X_1\times X_2)$;
\item[\rm(ii)]
${\rm Ker}^{}_F(X_1\times X_2)= {\rm Ker}^{}_F(X_1)\otimes_k {\rm Ker}^{}_F(X_2)$.
\end{enumerate}
\end{corollary}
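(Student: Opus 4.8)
The plan is to deduce the equivalence formally from Lemma \ref{ootimes}, exactly as in the proof of the corresponding statement Corollary \ref{c1} for the Makar-Limanov invariant. The key structural fact, recorded among the conventions, is that for $k$-subalgebras $A\subseteq k[X_1]$ and $B\subseteq k[X_2]$ the subalgebra of $k[X_1\times X_2]$ generated by $A$ and $B$ is precisely $A\otimes_k B$. Applying this with $A={\rm Ker}^{}_F(X_1)$ and $B={\rm Ker}^{}_F(X_2)$, I may identify ${\rm Ker}^{}_F(X_1)\otimes_k{\rm Ker}^{}_F(X_2)$ with the subalgebra of $k[X_1\times X_2]$ generated by ${\rm Ker}^{}_F(X_1)$ and ${\rm Ker}^{}_F(X_2)$. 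Note that ${\rm Ker}^{}_F(X)$ is always a genuine $k$-subalgebra of $k[X]$ (the trivial homomorphism contributes the term $k[X]$ to the intersection in \eqref{Ker}), so no empty case arises and the tensor-product bookkeeping is unambiguous.

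The implication (ii)$\Rightarrow$(i) is immediate: under the identification above, both ${\rm Ker}^{}_F(X_1)$ and ${\rm Ker}^{}_F(X_2)$ are contained in ${\rm Ker}^{}_F(X_1)\otimes_k{\rm Ker}^{}_F(X_2)$, so if the latter equals ${\rm Ker}^{}_F(X_1\times X_2)$, then (i) follows at once.

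For (i)$\Rightarrow$(ii) I would argue by two inclusions. One inclusion is supplied for free by Lemma \ref{ootimes}, namely ${\rm Ker}^{}_F(X_1\times X_2)\subseteq{\rm Ker}^{}_F(X_1)\otimes_k{\rm Ker}^{}_F(X_2)$. For the reverse inclusion I would use that ${\rm Ker}^{}_F(X_1\times X_2)$ is a $k$-subalgebra of $k[X_1\times X_2]$; by hypothesis (i) it contains both ${\rm Ker}^{}_F(X_1)$ and ${\rm Ker}^{}_F(X_2)$, hence it contains the subalgebra they generate, which by the convention above is exactly ${\rm Ker}^{}_F(X_1)\otimes_k{\rm Ker}^{}_F(X_2)$. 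Combining the two inclusions yields equality, which is (ii).

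There is no genuine obstacle here: the mathematical content resides entirely in Lemma \ref{ootimes}, and the corollary is a purely formal consequence of it together with the tensor-product description of the subalgebra generated by two factors. The only point requiring a moment's care is the bookkeeping of the identifications\,---\,that is, checking that the abstract tensor product ${\rm Ker}^{}_F(X_1)\otimes_k{\rm Ker}^{}_F(X_2)$ coincides with the concrete subalgebra of $k[X_1\times X_2]$ generated by the two kernels\,---\,which is precisely what the stated conventions guarantee.
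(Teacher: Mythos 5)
Your proof is correct and matches the paper's intent: Corollary~\ref{cc1} (like its model, Corollary~\ref{c1}) is left as a formal consequence of Lemma~\ref{ootimes} together with the stated convention identifying $A\otimes_k B$ with the subalgebra of $k[X_1\times X_2]$ generated by $A$ and $B$, and your two-inclusion argument is exactly that deduction.
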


\begin{example}\label{remai} Since ${\bf X}^*_0=(({\bf P}^1\times {\bf P}^1)\setminus \Delta)\times {\bf A}\!^1_*$, ${\bf Y}_0=({\bf P}^2\setminus C)\times {\bf A}\!^1$, and
${\bf Y}_0^*=({\bf P}^2\setminus C)\times {\bf A}\!^1_*$ (see Example \ref{threef}), we deduce from Lemma \ref{ootimes} and Example \ref{23} that ${\rm Ker}^{}_{{\bf G}_m}({\bf X}^*_0)={\rm Ker}^{}_{{\bf G}_m}({\bf Y}_0)={\rm Ker}^{}_{{\bf G}_m}({\bf Y}_0^*)=k$.
\end{example}

\begin{lemma}\label{conne}
For any connected algebraic group $F$ that
has no nontrivial characters,
\begin{equation}\label{starr}
k[X]^*\subseteq {\rm Ker}^{}_F(X).
\end{equation}
\end{lemma}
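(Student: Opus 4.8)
The plan is to fix an arbitrary algebraic homomorphism $\varphi\colon F\to{\rm Aut}(X)$ with image $H=\varphi(F)$ and to show that every unit $f\in k[X]^*$ lies in $k[X]^H$; intersecting over all $\varphi$ then gives \eqref{starr} by \eqref{Ker}. I regard $X$ as endowed with the algebraic $F$-action attached to $\varphi$ and write $g\cdot f$ for the induced action on regular functions, $(g\cdot f)(x)=f(g^{-1}\cdot x)$. First I reduce to irreducible $X$. Since $F$ is connected, it is irreducible as a variety, so for each irreducible component $X_i$ of $X$ the set $F\cdot X_i$ is irreducible and contains $X_i$; hence $X_i$ is $F$-stable and $f|_{X_i}\in k[X_i]^*$. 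As $X$ is reduced, restriction embeds $k[X]$ into $\prod_i k[X_i]$, so it is enough to prove that each $f|_{X_i}$ is $F$-invariant. Thus I may assume $X$ irreducible.

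The crux is the rigidity of units under a connected group action, which I would establish as follows. Consider the function $\Phi\in k[F\times X]$ given by $\Phi(g,x):=f(g^{-1}\cdot x)$; it is regular because the action is a morphism, and it vanishes nowhere because $f$ is a unit, so $\Phi\in k[F\times X]^*$. Both $F$ and $X$ are irreducible, so by Rosenlicht's theorem describing the units of a product of irreducible varieties, $\Phi$ factors as $\Phi(g,x)=c\,a(g)\,b(x)$ with $c\in k^*$, $a\in k[F]^*$ and $b\in k[X]^*$. Evaluating at $g=e$ yields $f(x)=c\,a(e)\,b(x)$, so $b$ is a nonzero scalar multiple of $f$; substituting this back gives
\begin{equation*}
(g\cdot f)(x)=f(g^{-1}\cdot x)=\frac{a(g)}{a(e)}\,f(x),
\end{equation*}
so that $g\cdot f=\chi(g)\,f$ with $\chi(g):=a(g)/a(e)$.

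Now I invoke the hypothesis on $F$. The map $\chi\colon F\to k^*$ is regular and satisfies $\chi(e)=1$ since $a\in k[F]^*$, and from $\chi(g_1g_2)\,f=(g_1g_2)\cdot f=g_1\cdot(g_2\cdot f)=\chi(g_1)\chi(g_2)\,f$ together with $f\neq0$ it is multiplicative; hence $\chi$ is a character of $F$. Because $F$ has no nontrivial characters, $\chi\equiv1$, whence $g\cdot f=f$ for every $g\in F$, i.e.\ $f\in k[X]^H$, as required.

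I expect the only genuine difficulty to be the factorization step $g\cdot f=\chi(g)f$, everything else being formal; this is exactly the point where Rosenlicht's product-of-units theorem enters. When $F$ is linear one can sidestep it: by Lemma \ref{GGG} such an $F$ is generated by its one-parameter unipotent subgroups, a unit is invariant under each of the corresponding ${\bf G}_a$-actions because $k[X]^*\subseteq{\rm ML}(X)$ (cf.\ \cite[1.4]{F}), and these subgroups generate $H$, so $f\in k[X]^H$.
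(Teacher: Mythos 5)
Your proof is correct and takes essentially the same route as the paper: reduce to irreducible $X$ via connectedness of the acting group, show that every unit is a semiinvariant $g\cdot f=\chi(g)f$, and kill the character $\chi$ using the hypothesis on $F$. The only difference is that you derive the semiinvariance step by hand from Rosenlicht's product-of-units theorem, whereas the paper simply cites \cite[Theorem 3.1]{PV2}, which is exactly that statement (and is itself proved by your argument).
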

\begin{proof} Let $H$ be the image of an algebraic homomorphism
$F\to {\rm Aut}(X)$.
We claim that
$k[X]^*\subseteq k[X]^H$; by virtue of
Definition \ref{Kerrrrr} this inclusion implies \eqref{starr}.
Since $H$ is connected, every irreducible component of $X$ is $H$-stable, so
proving the claim we may (and shall) assume that $X$ is irreducible. In this case every element of $k[X]^*$ is $H$-semiinvariant by \cite[Theorem 3.1]{PV2}, hence lies in $k[X]^H$ since $H$ has no nontrivial characters. This completes the proof.
\end{proof}

\begin{corollary} Let $\,F$ be a connected algebraic group that
has no nontrivial characters.
Let $X_1$ and $X_2$ be varieties such that
${\rm Ker}_F^{}(X_1)$ and ${\rm Ker}_F^{}(X_2)$ are generated
by units. Then ${\rm Ker}^{}_F(X_1\times X_2)=
{\rm Ker}^{}_F(X_1)\otimes_k {\rm Ker}^{}_F(X_2)$.
\end{corollary}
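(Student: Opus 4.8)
The plan is to deduce this corollary from Corollary~\ref{cc1} by verifying its condition (i). That is, I will show that each of ${\rm Ker}^{}_F(X_1)$ and ${\rm Ker}^{}_F(X_2)$ is contained in ${\rm Ker}^{}_F(X_1\times X_2)$; once this is established, the equality ${\rm Ker}^{}_F(X_1\times X_2)={\rm Ker}^{}_F(X_1)\otimes_k {\rm Ker}^{}_F(X_2)$ follows immediately from the equivalence (i)$\Leftrightarrow$(ii) of Corollary~\ref{cc1}. This mirrors the strategy used for the analogous Makar-Limanov statement earlier in the excerpt, where the chain $k[X_i]^*\subseteq {\rm ML}(X_1\times X_2)$ did the work.

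First I would exploit the hypothesis that ${\rm Ker}^{}_F(X_1)$ and ${\rm Ker}^{}_F(X_2)$ are generated by units. Since the two algebras are generated as $k$-algebras by their respective unit groups, it suffices to show that the units of each land in ${\rm Ker}^{}_F(X_1\times X_2)$; the whole generated subalgebra then follows because ${\rm Ker}^{}_F(X_1\times X_2)$ is itself a $k$-subalgebra. Concretely, for any $u\in {\rm Ker}^{}_F(X_1)^*$ I view $u$ as an element of $k[X_1\times X_2]$ under the identification $k[X_1]\hookrightarrow k[X_1\times X_2]$; since $u$ is a unit of $k[X_1]$, it is a unit of $k[X_1\times X_2]$ as well (its inverse in $k[X_1]$ serves as inverse in the larger algebra). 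Thus every such generator is a unit of $k[X_1\times X_2]$.

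Next I would invoke Lemma~\ref{conne}. Because $F$ is a connected algebraic group with no nontrivial characters, the lemma gives $k[X_1\times X_2]^*\subseteq {\rm Ker}^{}_F(X_1\times X_2)$. Combining this with the previous step, every unit-generator of ${\rm Ker}^{}_F(X_1)$ lies in $k[X_1\times X_2]^*\subseteq {\rm Ker}^{}_F(X_1\times X_2)$, and since ${\rm Ker}^{}_F(X_1\times X_2)$ is closed under the $k$-algebra operations, the whole algebra ${\rm Ker}^{}_F(X_1)$ is contained in ${\rm Ker}^{}_F(X_1\times X_2)$. The symmetric argument handles ${\rm Ker}^{}_F(X_2)$. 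This verifies condition (i) of Corollary~\ref{cc1}, and the desired equality follows.

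I do not anticipate a serious obstacle here, as the argument is a direct assembly of already-established results; the only point requiring a little care is the passage from the units to the full algebra, where one must note that a $k$-subalgebra generated by a set contained in ${\rm Ker}^{}_F(X_1\times X_2)$ is itself contained in that subalgebra. The role of the connectedness and character-freeness of $F$ is confined to its single use in Lemma~\ref{conne}, which is precisely what licenses the inclusion $k[X_1\times X_2]^*\subseteq {\rm Ker}^{}_F(X_1\times X_2)$ and thereby makes the ``generated by units'' hypothesis strong enough to conclude.
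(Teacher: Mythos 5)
Your proof is correct and is essentially the paper's intended argument: the paper leaves this corollary without a written proof precisely because it follows, exactly as you argue, by combining Lemma~\ref{conne} (applied to $X_1\times X_2$) with the equivalence (i)$\Leftrightarrow$(ii) of Corollary~\ref{cc1}, mirroring the proof given earlier for the Makar-Limanov analogue. Your extra care about passing from unit generators to the full subalgebra, and the observation that units of $k[X_i]$ remain units in $k[X_1\times X_2]$, fills in the same details the paper's ML proof handles by citing $k[X_1]^*, k[X_2]^*\subseteq k[X_1\times X_2]^*$.
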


\begin{lemma}\label{ss}
Let $\,G$ be a connected reductive algebraic group
of rank $\geqslant 2$. Then
every algebraic $G$-module $V$ is a $k$-linear span of the set
\begin{equation}\label{unipo}
 \bigcup_{H\subseteq G} V^H,
\end{equation}
where $H$ in \eqref{unipo} runs over all one-dimensional tori  of $\,G$.
\end{lemma}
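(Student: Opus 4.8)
The plan is to follow the strategy of Lemma \ref{unip}, but to exploit a maximal torus in place of a unipotent subgroup. Fix a maximal torus $T$ of $G$; by hypothesis $\dim T\geqslant 2$. Since $T$ is a torus and ${\rm char}\,k=0$, every algebraic $T$-module is a direct sum of one-dimensional weight spaces, so restricting the $G$-action to $T$ yields a weight decomposition
\begin{equation*}
V=\bigoplus_{\mu} V_\mu,\qquad V_\mu:=\{v\in V\mid t\cdot v=\mu(t)v\ \text{for all}\ t\in T\},
\end{equation*}
where $\mu$ ranges over the characters of $T$ (this uses that $V$, being algebraic, is the union of finite-dimensional $T$-submodules; cf.\ \cite[Sect.\,8.2]{Bo}). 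As $V$ is the $k$-linear span of the $V_\mu$, it suffices to show that each weight space $V_\mu$ is contained in $V^H$ for some one-dimensional torus $H\subseteq T\subseteq G$.

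I would then produce such an $H$ weight by weight. If $\mu=0$, then $V_\mu=V^T$ is fixed by every subtorus of $T$, so any one-dimensional $H\subseteq T$ works. If $\mu\neq 0$, then $\mu\colon T\to {\bf G}_m$ is a nontrivial character, hence surjective, so its kernel $\ker\mu$ is a closed subgroup of $T$ of dimension $\dim T-1\geqslant 1$. Consequently the identity component $(\ker\mu)^0$ is a torus of positive dimension, and I choose any one-dimensional torus $H\subseteq(\ker\mu)^0$. For every $h\in H$ one has $\mu(h)=1$, so $h$ acts as the identity on $V_\mu$; that is, $V_\mu\subseteq V^H$, as required.

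Combining the two cases, every $V_\mu$ lies in the set \eqref{unipo}, and hence so does a spanning set of $V$, which proves the lemma. The only step that genuinely requires care, and the sole place where the rank hypothesis enters, is the estimate $\dim\ker\mu=\dim T-1\geqslant 1$: this is exactly what guarantees that the kernel of a nontrivial weight contains a one-dimensional torus. It is also why the statement must fail in rank $1$ — for the standard ${\bf SL}_2$-module the only weights are $\pm\varpi$, every one-dimensional torus is a maximal torus up to conjugacy, and no nonzero weight vector is fixed by any of them — so the hypothesis cannot be weakened. Beyond verifying these routine facts about torus actions, I expect no serious obstacle.
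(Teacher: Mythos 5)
Your proof is correct, but it globalizes differently from the paper's. The paper argues exactly as in Lemma \ref{unip}: using complete reducibility (this is where reductivity of $G$ and ${\rm char}\,k=0$ enter) it reduces to the case of a nonzero \emph{simple} $G$-module $V$, picks a single $T$-weight vector $v$, notes that the stabilizer $T_v$ is diagonalizable of dimension $\geqslant \dim T-1\geqslant 1$ and hence contains a one-dimensional torus $S$ with $v\in V^S$, and then spreads this out by the $G$-action: $g\cdot v\in V^{gSg^{-1}}$, so the orbit $G\cdot v$ lies in the set \eqref{unipo}, and simplicity of $V$ forces the $k$-linear span of $G\cdot v$ to be all of $V$. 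You instead use the full weight decomposition $V=\bigoplus_\mu V_\mu$ and kill each nonzero weight $\mu$ by a one-dimensional torus inside $(\ker\mu)^0$; the essential dimension count ($\dim\ker\mu=\dim T-1\geqslant 1$) is the same as the paper's count for $T_v$, since $T_v$ is precisely the kernel of the weight of $v$. Your route buys something real: it never uses reductivity of $G$ or simplicity of $V$ --- only the restriction of the action to $T$ matters --- so it proves the statement for any connected algebraic group of rank $\geqslant 2$ (indeed for any algebraic module over a torus of dimension $\geqslant 2$), and it shows moreover that the one-dimensional tori may all be taken inside a single maximal torus. What the paper's orbit argument buys is uniformity of exposition: it is the same argument as in Lemma \ref{unip}, where $H$ runs over one-parameter unipotent subgroups and no analogue of the weight decomposition is available, so the two lemmas get parallel proofs.
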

\begin{proof} Like in the proof of Lemma \ref{unip} we may (and shall) assume that $V$ is a nonzero simple $G$-module. Let $T$ be a maximal torus of $G$ and let $v\in V$, $v\neq 0$, be a weight vector of $T$. Since $\dim T\geqslant 2$, the $T$-stabilizer $T_v$ of $v$ is a diagonalizable group of dimension $\geqslant 1$. Hence $T_v$ contains a one-dimensional torus $S$. Thus, $v\in V^S$. Like in the proof of Lemma \ref{unip} we then conclude that
the orbit $G\cdot v$ is contained in set \eqref{unipo}. Since
$V$ is simple, the $k$-linear span of $G\cdot v$ coincides with
$V$; whence the claim.
\end{proof}

\begin{theorem}\label{EEEE} Let $X$ be a variety such that ${\rm Aut}(X)$ contains a connected reductive algebraic group $G$ of rank $\geqslant 2$. Then
$${\rm Env}^{}_{{\bf G}_m}(X)=k[X].
$$
\end{theorem}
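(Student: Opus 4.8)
The plan is to follow, essentially verbatim, the argument that proved Theorem \ref{Dtriv}, with Lemma \ref{ss} now playing the role that Lemma \ref{unip} played there. The point is that the two theorems are exact analogues: the ${\bf G}_a$-statement used that a reductive noncommutative $G$ is spanned on algebraic modules by fixed spaces of one-parameter unipotent subgroups, while here Lemma \ref{ss} supplies the corresponding span by fixed spaces of one-dimensional tori, which are precisely the subgroups isomorphic to ${\bf G}_m$. So I would fix a connected reductive algebraic subgroup $G \subseteq {\rm Aut}(X)$ of rank $\geqslant 2$, so that $X$ carries a $G$-action and $k[X]$ becomes a $G$-module.

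First I would recall that the $G$-module $k[X]$ is algebraic, by \cite[Lemma 1.4]{PV2}; this is exactly the hypothesis needed to invoke Lemma \ref{ss}. Applying that lemma to $V = k[X]$, I conclude that $k[X]$ coincides with the $k$-linear span of the set $\bigcup_{H} k[X]^{H}$, where $H$ runs over all one-dimensional tori of $G$. Each such $H$ is isomorphic to ${\bf G}_m$ and is a subgroup of ${\rm Aut}(X)$, so by Definition \ref{Envvv} every subspace $k[X]^{H}$ is contained in ${\rm Env}_{{\bf G}_m}(X)$. Since ${\rm Env}_{{\bf G}_m}(X)$ is a $k$-subalgebra, and in particular a $k$-subspace, of $k[X]$, it contains the $k$-linear span of $\bigcup_{H} k[X]^{H}$, which is all of $k[X]$. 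The reverse inclusion being trivial, this yields ${\rm Env}_{{\bf G}_m}(X) = k[X]$.

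I do not expect a genuine obstacle at the level of this theorem, as all the substantive work is concentrated in Lemma \ref{ss}; the remaining steps (algebraicity of the $G$-module $k[X]$ and the observation that one-dimensional tori are copies of ${\bf G}_m$) are immediate. It is worth flagging, however, where the rank hypothesis enters, since that is the only place the argument could fail if weakened: in Lemma \ref{ss} one takes a weight vector $v$ of a maximal torus $T$ and uses $\dim T \geqslant 2$ to guarantee that its stabilizer $T_v$, being cut out by a single character, has dimension $\geqslant 1$ and hence contains a one-dimensional torus fixing $v$. For rank $1$ this breaks down, which is consistent with the fact (cf.\ Theorem \ref{Dtriv} and Example \ref{KRT}) that a single ${\bf G}_m$ need not saturate $k[X]$.
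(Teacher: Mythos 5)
Your proof is correct and is essentially identical to the paper's: both invoke the algebraicity of the $G$-module $k[X]$ and then deduce the claim from Lemma \ref{ss} together with Definition \ref{Envvv}, noting that one-dimensional tori are copies of ${\bf G}_m$ inside ${\rm Aut}(X)$. Your additional remark pinpointing where the rank $\geqslant 2$ hypothesis is used is accurate but belongs to Lemma \ref{ss}, not to this theorem.
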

\begin{proof}
%%Let $G$ be a connected reductive algebraic subgroup in ${\rm %%Aut}(X)$ of rank $\geqslant 2$.
Since the $G$-module $k[X]$ is algebraic, the claim follows from
Lemma \ref{ss} and Definition \ref{Envvv}.
\end{proof}

\begin{remark}{\rm  Clearly, ${\rm Env}^{}_{{\bf G}_m}({\bf A}\!^1)=k$. This shows that in Lemma \ref{ss} and Theorem \ref{EEEE} the condition ``$\geqslant 2$'' can not be repla\-ced by
``$\geqslant 1$''.
}
\end{remark}

\section{Finite automorphism groups  of algebraic varieties}

\subsection{Jordan groups}
\

 The following definition
is inspired by the classical Jordan theorem (Theorem \ref{Jt} below).

\begin{definition} \label{Jd} %%Let $X$ be a algebraic variety.
A group $G$ is called a {\it Jordan group}
if there exists a positive integer
$J_G$, depending only on $G$, such that every finite subgroup
$K$ of $G$ contains a normal abelian subgroup whose index in $K$ is at most $J_G$.
\end{definition}

\begin{remark} Dropping the assumption of normality
in Definition \ref{Jd}
we do not obtain a more general notion.
Indeed, as is known (see, e.g.,\,\cite[Exer.\,12 to Chap.\,I]{Lang}), if a group $P$ contains a
%%finite index
subgroup $Q$
of finite index,
%%such that $[P:Q]\leqslant \infty$,
then there is
%%$Q$ contains
a normal subgroup $N$ of $P$
%%whose index in $P$ is at most $n!$
such that $[P:N]\leqslant [P:Q]!$ and $N\subseteq Q$.
\end{remark}

Jordan's theorem (see \cite[Theorem 36.13]{CR}) can
 be then re\-formulated as follows:

\begin{theorem}\label{Jt} Every
$\,{\bf GL}_n(k)$ is Jordan.
\end{theorem}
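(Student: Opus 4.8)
The plan is to prove the classical Jordan theorem: for each $n$ there is a constant $J_n$ such that every finite subgroup $K\subseteq {\bf GL}_n(k)$ has a normal abelian subgroup of index at most $J_n$. Since $k$ is an algebraically closed field of characteristic zero, any finite subgroup $K$ is conjugate into ${\bf GL}_n(\mathbb C)$ (it is defined over a finitely generated, hence embeddable-in-$\mathbb C$, subfield, and by complete reducibility its representation is unitarizable), so I may assume $K\subseteq {\bf U}(n)\subseteq {\bf GL}_n(\mathbb C)$ is a finite group of unitary matrices. The geometric idea is that elements of $K$ close enough to the identity must commute, and the compactness of ${\bf U}(n)$ bounds how many cosets of the subgroup they generate can appear.

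First I would fix the bi-invariant metric on ${\bf U}(n)$ coming from the operator norm, and choose a small neighborhood $V$ of the identity on which the commutator estimate is effective: there is an explicit constant (one can take the neighborhood where $\|g-1\|<1/2$ and $\|h-1\|<1/2$) such that if $g,h\in K\cap V$ then the commutator $[g,h]$ lies strictly closer to the identity than the worse of $g,h$, by an inequality of the shape $\|[g,h]-1\|\leqslant 2\|g-1\|\,\|h-1\|$. The key consequence is that the subgroup $A$ of $K$ generated by $K\cap V$ must be abelian: starting from any pair of generators, iterating the commutator shrinks distances to the identity, but $K$ is finite so the only group element arbitrarily close to $1$ is $1$ itself, forcing all commutators to be trivial. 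Next I would show $A$ is normal in $K$: since the metric is conjugation-invariant, $gVg^{-1}=V$ for all $g\in K$, so conjugation permutes $K\cap V$ and hence preserves the subgroup it generates.

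The final step bounds the index $[K:A]$ independently of $K$. Because ${\bf U}(n)$ is compact, it can be covered by finitely many translates of $V$ — say $N_n$ of them, a number depending only on $n$ and the fixed radius of $V$. Two elements $g,h\in K$ lying in the same translate satisfy $g^{-1}h\in V$, hence $g^{-1}h\in A$, so they represent the same coset of $A$. Therefore the number of cosets of $A$ in $K$ is at most $N_n$, and I may take $J_n:=N_n$. This gives a normal abelian subgroup $A\trianglelefteq K$ with $[K:A]\leqslant J_n$, which is exactly the Jordan property for ${\bf GL}_n(k)$.

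The main obstacle is making the commutator-shrinking argument fully rigorous: I must verify that the chosen neighborhood $V$ is genuinely closed under the shrinking estimate (so that repeated commutators stay in $V$ and strictly decrease in distance to $1$), and then argue that a finite group can contain no nontrivial element arbitrarily close to the identity, so the descending sequence of distances forces triviality of all commutators among generators lying in $V$. The covering-number step is routine compactness, and the reduction to the unitary case is standard, so the delicate point is purely the quantitative commutator inequality on the chosen neighborhood.
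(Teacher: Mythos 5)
You should know at the outset that the paper does not actually prove this statement: it is the classical Jordan theorem, quoted from \cite[Theorem~36.13]{CR}, so your argument stands or falls on its own. Its architecture is the standard Bieberbach--Frobenius one, and most steps are sound: the reduction to a finite subgroup $K$ of the unitary group ${\bf U}(n)$ is correct (the entries of $K$ generate a finitely generated field embeddable in ${\bf C}$, and averaging a Hermitian form unitarizes); the subgroup $A$ generated by $K\cap V$ is indeed normal in $K$, because conjugation by unitaries is an isometry for the operator norm; and the index bound by a covering number is routine compactness, modulo the small repair that two elements in the same translate of $V$ only satisfy $g^{-1}h\in V^{-1}V$, so one must cover ${\bf U}(n)$ by translates of a half-radius ball $W$ with $W^{-1}W\subseteq V$.

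The genuine gap is the claim that $A$ is abelian. Your argument is: iterate $h_0=h$, $h_{k+1}=[g,h_k]$; the estimate $\|[g,h_k]-1\|\leqslant 2\|g-1\|\,\|h_k-1\|$ shows $\|h_k-1\|\to 0$ geometrically, and since a finite group contains no nontrivial elements arbitrarily close to $1$, the sequence hits $1$; you then conclude that all commutators are trivial. This last inference is a non sequitur: the iteration reaching $1$ says only that $g$ commutes with \emph{some iterated commutator} $h_m$, not that $g$ commutes with $h$. The exact obstruction is a Heisenberg-type (class-two nilpotent) configuration: if $c:=[g,h]\neq 1$ but $c$ is central in $\langle g,h\rangle$, then the sequence is $h,\,c,\,1,\,1,\dots$, fully consistent with the shrinking estimate, while $A$ would be nonabelian. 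Ruling this out is where the real content of Jordan's theorem lies. One standard repair: among non-commuting pairs in $K\cap V$ choose $(g,h)$ minimizing $\|g-1\|+\|h-1\|$; since $\|c-1\|\leqslant 2\|g-1\|\,\|h-1\|<\min(\|g-1\|,\|h-1\|)$ and $c\in K\cap V$, minimality forces $c$ to commute with both $g$ and $h$; then $ghg^{-1}=ch$ shows that on each eigenspace of $c$ with eigenvalue $\lambda\neq 1$ the spectrum of $h$ is invariant under multiplication by $\lambda$, so $h$ has an eigenvalue at angular distance at least $\pi/2$ from $1$, whence $\|h-1\|\geqslant\sqrt{2}$, contradicting $h\in V$. (Alternatively one inducts on $n$, using Schur's lemma and a determinant bound to kill scalar commutators.) Without a step of this kind your proof is incomplete precisely at the point you yourself flagged as the delicate one.
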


\begin{remark} {\rm For $G={\bf GL}_n(k)$, the explicit upper bounds
$J_G$ are known, see \cite[\S36]{CR}.}
\end{remark}

Since subgroups of Jordan groups are Jordan and
every linear algebraic group
is isomorphic to a subgroup of
some $\,{\bf GL}_n(k)$
(see \cite[2.3.7]{Sp}), Theorem \ref{Jt} yields the following more general

\begin{theorem}\label{lag}
 Every linear algebraic group is Jordan.
\end{theorem}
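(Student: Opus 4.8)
The plan is to deduce this from Theorem~\ref{Jt} together with the existence of a faithful finite-dimensional representation of an arbitrary linear algebraic group. First I would invoke \cite[2.3.7]{Sp}: every linear algebraic group $G$ is isomorphic, as an algebraic group and hence in particular as an abstract group, to a closed subgroup of $\,{\bf GL}_n(k)$ for some $n$. This reduces the theorem to the assertion that any subgroup of a Jordan group is again Jordan.

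The key observation is that the Jordan property passes to subgroups with the same bound. Suppose $G$ is Jordan with constant $J_G$ and let $H$ be any subgroup of $G$. A finite subgroup $K$ of $H$ is in particular a finite subgroup of $G$, so by hypothesis $K$ contains a normal abelian subgroup $A$ with $[K:A]\leqslant J_G$. The point to note is that the normality required in Definition~\ref{Jd} is internal to $K$, so it is unaffected by passing between $G$, $H$, and $K$, and no compatibility with the ambient group is needed. Hence $H$ is Jordan with $J_H=J_G$.

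Combining the two steps, if $G$ embeds in $\,{\bf GL}_n(k)$, which is Jordan by Theorem~\ref{Jt}, then $G$ inherits the Jordan property, with $J_G$ taken to be the bound $J_{{\bf GL}_n(k)}$ supplied by the classical theorem.

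There is essentially no genuine obstacle here: both ingredients—the classical Jordan theorem for general linear groups (Theorem~\ref{Jt}) and the faithful linear representation of a linear algebraic group—are available, and heredity of the Jordan property under passage to subgroups is immediate from the definition. The result is therefore a formal consequence of the preceding material, the only subtlety being the internal nature of the normality condition noted above.
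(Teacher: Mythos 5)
Your proof is correct and follows exactly the paper's own route: the paper deduces Theorem~\ref{lag} from Theorem~\ref{Jt} by the same two observations, namely that subgroups of Jordan groups are Jordan and that every linear algebraic group embeds into some $\,{\bf GL}_n(k)$ via \cite[2.3.7]{Sp}. Your additional remark that the normality condition is internal to $K$ is a correct (if routine) justification of the heredity step, which the paper states without comment.
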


\begin{lemma}\label{G/H} Let $H$ be a finite normal subgroup of a group $\,G$. If $\,G$ is Jordan, then $G/H$ is Jordan.
\end{lemma}
\begin{proof} Let $\pi\colon G\to G/H$ be the natural projection and let $K$ be a finite subgroup of $G/H$. Since $H$ is finite,
$\pi^{-1}(K)$ is a finite subgroup of $G$. As $G$ is Jordan, $\pi^{-1}(K)$ contains a normal abelian subgroup $A$ whose index is at most $J^{}_G$. Hence $\pi(A)$ is a normal abelian subgroup of $K$ whose index is at most $J_G$.
\end{proof}

\begin{lemma}\label{HHH} Let $H$ be a normal torsion-free subgroup of a group $\,G$. If $\,G/H$ is Jordan, then $G$ is Jordan with
$J_{G}=J_{G\!/\!H}$.
\end{lemma}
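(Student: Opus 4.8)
The plan is to show that every finite subgroup of $G$ embeds into $G/H$ via $\pi$, so that the Jordan bound for $G/H$ transfers verbatim. Let $\pi\colon G\to G/H$ be the natural projection. The key observation is that a torsion-free normal subgroup meets every finite subgroup trivially: if $K$ is a finite subgroup of $G$, then $K\cap H$ is a finite subgroup of the torsion-free group $H$, hence $K\cap H=\{1\}$. Consequently the restriction $\pi|_K\colon K\to G/H$ is injective, and $K$ is isomorphic to the finite subgroup $\pi(K)$ of $G/H$.

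The remaining step is routine transport of structure. Since $G/H$ is Jordan, the finite subgroup $\pi(K)$ contains a normal abelian subgroup $B$ with $[\pi(K):B]\leqslant J_{G/H}$. Pulling back, $A:=(\pi|_K)^{-1}(B)$ is a subgroup of $K$; because $\pi|_K$ is an isomorphism onto $\pi(K)$, $A$ is normal in $K$, abelian, and satisfies $[K:A]=[\pi(K):B]\leqslant J_{G/H}$. As $K$ was an arbitrary finite subgroup of $G$, this exhibits $G$ as Jordan with $J_G=J_{G/H}$.

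I do not anticipate a serious obstacle here; the only point requiring care is the implicit use of characteristic zero—no, rather, the only genuine input is the torsion-freeness of $H$, which is exactly what forces $K\cap H=\{1\}$ and makes $\pi|_K$ injective. Everything else is the elementary fact that an isomorphism of finite groups carries normal abelian subgroups of bounded index to normal abelian subgroups of the same index. Thus the bound is preserved without any loss, giving the sharp conclusion $J_G=J_{G/H}$ asserted in the statement.
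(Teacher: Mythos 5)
Your proof is correct and is essentially identical to the paper's: both use torsion-freeness of $H$ to get $K\cap H=\{1\}$, conclude that $\pi|_K$ is an isomorphism onto $\pi(K)$, and transport the normal abelian subgroup of bounded index back to $K$. The paper merely states this more tersely; no difference in substance.
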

\begin{proof}  Let $\pi\colon G\to G/H$ be the natural projection and let $K$ be a finite subgroup of $G$. Since $H$ is torsion free, $K\cap H=\{1\}$. Therefore, $\pi|_K\colon K\to \pi(K)$ is an isomorphism. As $G/H$ is Jordan, this implies that $K$ contains a normal abelian subgroup whose index in $K$ is at most $J_{G\!/\!H}$.
\end{proof}

\begin{lemma}\label{product} If the groups $\,G_1$ and $\,G_2$
are Jordan, then $G_1\times G_2$ is Jordan.
\end{lemma}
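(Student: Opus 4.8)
The plan is to reduce the statement to the known Jordan property of the factors by analyzing an arbitrary finite subgroup $K$ of $G_1\times G_2$ through its projections. First I would let $p_i\colon G_1\times G_2\to G_i$ denote the two canonical projections and set $K_i:=p_i(K)$. Then $K_i$ is a finite subgroup of the Jordan group $G_i$, so by definition it contains a normal abelian subgroup $A_i\trianglelefteq K_i$ with $[K_i:A_i]\leqslant J_{G_i}$. The natural inclusion $K\hookrightarrow K_1\times K_2$ is what lets me transport these subgroups back into $K$.

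Next I would form $A:=K\cap(A_1\times A_2)$. Since $A_1\times A_2$ is abelian (a product of abelian groups), its subgroup $A$ is abelian as well. To see that $A$ is normal in $K$, note that $A_1\times A_2$ is normal in $K_1\times K_2$ (a product of normal subgroups is normal in the product), and $K$ normalizes $K\cap(A_1\times A_2)$ because conjugation by an element of $K$ preserves both $K$ and the normal subgroup $A_1\times A_2$. Hence $A$ is a normal abelian subgroup of $K$.

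The remaining point is to bound the index $[K:A]$. Here I would use the embedding $K/A=K/(K\cap(A_1\times A_2))\hookrightarrow (K_1\times K_2)/(A_1\times A_2)\cong (K_1/A_1)\times(K_2/A_2)$, which follows from the second isomorphism theorem applied inside $K_1\times K_2$. Therefore
\begin{equation*}
[K:A]\leqslant [K_1:A_1]\,[K_2:A_2]\leqslant J_{G_1}J_{G_2}.
\end{equation*}
Setting $J_{G_1\times G_2}:=J_{G_1}J_{G_2}$, which depends only on $G_1$ and $G_2$, shows that every finite subgroup $K$ of $G_1\times G_2$ has a normal abelian subgroup of index at most $J_{G_1\times G_2}$, so $G_1\times G_2$ is Jordan.

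I do not expect any serious obstacle here; the argument is a routine product-decomposition verification. The only point requiring mild care is the index bound, where one must use the injection of $K/A$ into the product of the quotients rather than naively multiplying, so that the finiteness and the explicit constant $J_{G_1}J_{G_2}$ come out correctly.
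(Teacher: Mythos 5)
Your proof is correct and is essentially the same as the paper's: both pass to the projections $K_i$, take the same abelian normal subgroup $A=K\cap(A_1\times A_2)$ (the paper writes it as $\widetilde A_1\cap\widetilde A_2$ with $\widetilde A_i=\pi_i^{-1}(A_i)\cap K$), and bound $[K:A]\leqslant J_{G_1}J_{G_2}$ by embedding $K/A$ into a product of the quotients. The only cosmetic difference is that you invoke the second isomorphism theorem where the paper uses the diagonal homomorphism $K\to\prod_i K/\widetilde A_i$ with kernel $A$.
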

\begin{proof}
Let $\pi^{}_i\colon G:=G_1\times G_2\to G^{}_i$ be the natural projection and let $K$ be a finite subgroup of $G$. Since $G^{}_i$ is Jordan, $K^{}_i:=\pi^{}_i(K)$ contains an abelian normal subgroup $A^{}_i$ such that
\begin{equation}\label{ne}
[K^{}_i:A^{}_i]\leqslant J^{}_{K_i}.
\end{equation}
The subgroup ${\widetilde A}^{}_i:=\pi^{-1}_i(A^{}_i)\cap K$ is  normal in $K$ and $K/{\widetilde A}^{}_i$ is isomorphic
to $K^{}_i/A^{}_i$. From \eqref{ne} we then conclude that
\begin{equation}\label{nee}
[K:{\widetilde A}^{}_i]\leqslant J^{}_{K_i}.
\end{equation}

Since $A:={\widetilde A}^{}_1 \cap {\widetilde A}^{}_2$ is the kernel of the diagonal homomorphism
$$
K\longrightarrow \prod_{i=1}^2 K/{\widetilde A}^{}_i
$$
determined by the canonical projections $K\to K/{\widetilde A}^{}_i$, we infer from \eqref{nee} that
\begin{equation}\label{neee}
[K:A]=|K/A|\leqslant |\prod_{i=1}^2 K/{\widetilde A}^{}_i|\leqslant J^{}_{K_1}J^{}_{K_2}
\end{equation}
By construction, $A\subseteq A_1\times A_2$. Since $A_i$ is abelian, this implies that $A$ is abelian. As $A$ is normal in $K$, this and \eqref{neee} complete the proof.
\end{proof}

 The following definition distinguishes
  a special class of Jordan groups.

\begin{definition} A group $G$ is called {\it bounded} if there is a positive integer $b^{}_G$, depending only on $G$, such that the order of every finite subgroup of $G$ is at most~$b^{}_G$.
\end{definition}

\begin{examples} \label{bbbb}
\
(1) Finite groups and torsion free groups are bounded.

(2) Every finite subgroup of ${\bf GL}_n({\bf Q})$ is conjugate to a subgroup of ${\bf GL}_n(\bf Z)$ (see, e.g.,
\cite[Theorem 73.5]{CR}). On the other hand,
by Minkowski's theorem (see,\;e.g.,\;\cite[Theorem 39.4]{Hu})
${\bf GL}_n({\bf Z})$
is bounded. Hence
${\bf GL}_n({\bf Q})$ is bounded.
Note that
H. Minkowski and I. Schur obtained explicit upper bounds
of the orders of finite subgroups in ${\bf GL}_n(\bf Z)$, see \cite[\S39]{Hu}.

(3) It is immediate from the definition that every extension of a bounded group by bounded is bounded as well.
\end{examples}

\begin{lemma}\label{JJJJ} Let $\,H$ be a normal subgroup of a
group \,$G$ such that $\,G/H$ is bounded. Then $\,G$ is Jordan if and only if $\,H$ is Jordan.
\end{lemma}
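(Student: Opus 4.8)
The plan is to prove the two implications separately, noting that one direction is essentially immediate and locating all the real content in the converse. For the forward direction ($G$ Jordan $\Rightarrow$ $H$ Jordan), I would simply invoke the fact, already used above, that subgroups of Jordan groups are Jordan: every finite subgroup $K$ of $H$ is a finite subgroup of $G$, hence contains a normal abelian subgroup of index at most $J^{}_G$, so $H$ is Jordan with $J^{}_H=J^{}_G$. No boundedness of $G/H$ is needed here.

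The substance is the converse ($H$ Jordan $\Rightarrow$ $G$ Jordan). Let $b:=b^{}_{G/H}$ and let $\pi\colon G\to G/H$ be the natural projection. First I would fix an arbitrary finite subgroup $K$ of $G$ and control its intersection with $H$. Since $K$ is finite, $\pi(K)$ is a finite subgroup of the bounded group $G/H$, so $|\pi(K)|\leqslant b$; as $\ker(\pi|^{}_K)=K\cap H$, this gives $[K:K\cap H]=|\pi(K)|\leqslant b$. Now $K\cap H$ is a finite subgroup of the Jordan group $H$, so it contains an abelian subgroup $A$ with $[K\cap H:A]\leqslant J^{}_H$. Multiplying indices yields an abelian subgroup $A$ of $K$ with $[K:A]=[K:K\cap H]\,[K\cap H:A]\leqslant b\,J^{}_H$.

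At this stage $A$ is abelian of index bounded by the constant $b\,J^{}_H$, but it need not be normal in $K$, and supplying normality is the one genuine obstacle. I would resolve it exactly by the group-theoretic fact recalled in the remark following Definition~\ref{Jd} (see \cite[Exer.\,12 to Chap.\,I]{Lang}): applied to $P=K$ and $Q=A$ it produces a normal subgroup $N$ of $K$ with $N\subseteq A$ and $[K:N]\leqslant [K:A]!\leqslant (b\,J^{}_H)!$. Since $N\subseteq A$ and $A$ is abelian, $N$ is abelian; and $N$ is normal in $K$ by construction. Thus every finite $K\leqslant G$ has a normal abelian subgroup of index at most $(b^{}_{G/H}\,J^{}_H)!$, a bound depending only on $G$, so $G$ is Jordan with $J^{}_G=(b^{}_{G/H}\,J^{}_H)!$. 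The whole argument hinges on the interplay of three quantitative inputs — the bound $b$ from boundedness of $G/H$, the Jordan constant $J^{}_H$, and the passage from a bounded-index abelian subgroup to a normal one via the core estimate — and apart from that normalization step every inequality is routine.
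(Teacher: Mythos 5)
Your proof is correct, and both implications are handled soundly: the forward one by the "subgroups of Jordan groups are Jordan" observation, and the converse by producing, for each finite subgroup $K$ of $G$, a normal abelian subgroup of index at most $(b_{G/H}J_H)!$. However, your normalization step takes a genuinely different route from the paper's. The paper also sets $L=K\cap H$ and takes a normal abelian $A\leqslant L$ with $[L:A]\leqslant J_H$, but instead of invoking the factorial core estimate from the remark after Definition \ref{Jd}, it forms the intersection $M=\bigcap_{g\in K}gAg^{-1}$ directly and bounds its index by hand: normality of $A$ in $L$ and of $L$ in $K$ (facts you never use, since you only need $A$ abelian) let it reduce the intersection to the conjugates $g_iAg_i^{-1}$ over the $|K/L|$ coset representatives of $L$ in $K$, and the diagonal homomorphism $L\to\prod_i L/g_iAg_i^{-1}$ then gives $[L:M]\leqslant J_H^{|K/L|}$, hence $[K:M]\leqslant b_{G/H}J_H^{\,b_{G/H}}$. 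Both arguments in effect construct the normal core of $A$ in $K$; yours buys brevity and modularity by citing the Lang exercise as a black box, at the cost of the factorial Jordan constant $(b_{G/H}J_H)!$, while the paper's self-contained computation exploits the extra normality to obtain the sharper bound $b_{G/H}J_H^{\,b_{G/H}}$.
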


\begin{proof}
A proof is needed only for the sufficiency.
So assume that $H$ is Jordan;
we have to prove that $G$ is Jordan.
Let $K$ be a finite subgroup of $G$. By Definition \ref{Jd}
\begin{equation}\label{KH}
L:=K\cap H
\end{equation}
contains an abelian normal subgroup $A$
such that
\begin{equation}\label{L}
[L:A]\leqslant J_H.
\end{equation}
Let $g$ be an element of $K$. Since $L$ is a normal subgroup of $K$, we infer that
 $gAg^{-1}$
 is a normal abelian subgroup of $L$ and
 \begin{equation}\label{LL}
[L:A]=[L:gAg^{-1}].
\end{equation}

Consider now the group
\begin{equation}\label{M}
M:=  \bigcap_{g\in K} gAg^{-1}.
\end{equation}
It is a normal abelian subgroup of $K$.
We claim that $[K:M]$ is upper bounded by a constant not depending on $K$. To prove this, fix  the representatives
$g_1,\ldots, g^{\ }_{|K/L|}$ of all cosets of $L$ in $K$. Then \eqref{M} and normality of $A$ in $L$ imply that
\begin{equation}\label{MM}
M=\bigcap_{i=1}^{|K/L|} g_iAg_i^{-1}.
\end{equation}
 From \eqref{MM} we deduce that  $M$ is the kernel of the diagonal homomorphism
 \begin{equation*}
 L\longrightarrow \prod_{i=1}^{|K/L|} L/g_iAg_i^{-1}
  \end{equation*}
  determined by the canonical projections $L\to L/g_iAg_i^{-1}$. This, \eqref{LL}, and \eqref{L} yield
  \begin{equation}\label{KLJ}
  [L:M]\leqslant [L:A]^{|K/L|}\leqslant J_{H}^{|K/L|}.
  \end{equation}

  Let $\pi\colon G\to G/H$ be the canonical projection.
  By  \eqref{KH} the finite subgroup
  $\pi(K)$ of
$G/H$ is isomorphic to $K/L$. Since $G/H$ is bounded, this yields
$|K/L|\leqslant b^{}_{G/H}$.
We then deduce from
 \eqref{KLJ} and $[K:M]=[K:L][L:M]$ that
 \begin{equation*}
 [K:M]\leqslant  b^{}_{G/H} J_{H}^{\,b^{}_{G/H}}.
 \end{equation*}
 This completes the proof.
\end{proof}

\begin{remark} There are non-Jordan extensions
(even semidirect products) of
Jordan groups by Jordan ones,
see in Subsection \ref{g} below the discussion of
${\rm Bir}({\bf P}^1\times B)$ where $B$ is an
elliptic curve.
%%: by \cite{Z}, ${\rm Bir}({\bf P}^1\times B)$ where $B$ is an %%elliptic curve, is such an example, see Subsection \ref{g} below.\;
Therefore,  ``bounded'' in Lemma \ref{JJJJ}  cannot be replaced by ``Jordan''.
\end{remark}

\begin{corollary} Let $\,H$ be a finite normal subgroup of a group $\,G$ such that the center of $\,H$ is trivial. If $\,G/H$ is Jordan, then $\,G$ is Jordan.
\end{corollary}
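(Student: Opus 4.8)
The plan is to exploit the conjugation action of $G$ on the finite normal subgroup $H$. Since $H$ is normal in $G$, conjugation yields a homomorphism $\varphi\colon G\to {\rm Aut}(H)$ sending $g$ to the automorphism $h\mapsto ghg^{-1}$, whose kernel is the centralizer $C:=C_G(H)$ of $H$ in $G$. As a kernel, $C$ is normal in $G$, and $G/C$ is isomorphic to a subgroup of ${\rm Aut}(H)$. Because $H$ is finite, ${\rm Aut}(H)$ is finite, so $G/C$ is a finite group and therefore bounded by Examples \ref{bbbb}(1).

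First I would identify $C$ with a subgroup of the Jordan group $G/H$. The key point is that $C\cap H=Z(H)=\{1\}$ by the triviality-of-center hypothesis, since an element of $H$ centralizing all of $H$ lies in $Z(H)$. Consequently, if $\pi\colon G\to G/H$ denotes the canonical projection, its restriction $\pi|_C\colon C\to G/H$ has trivial kernel $C\cap H$, hence is injective and realizes $C$ as a subgroup of $G/H$. Since $G/H$ is Jordan and subgroups of Jordan groups are Jordan, $C$ is Jordan.

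Finally, I would apply Lemma \ref{JJJJ} to the normal subgroup $C$ of $G$: its quotient $G/C$ is bounded, so $G$ is Jordan if and only if $C$ is Jordan. Having established that $C$ is Jordan, this yields that $G$ is Jordan. The crux of the argument is the observation that the trivial-center assumption forces $C\cap H$ to vanish, which is exactly what makes $C$ embed into $G/H$; once this is in place, the boundedness of $G/C$ coming from the finiteness of ${\rm Aut}(H)$ lets Lemma \ref{JJJJ} finish the job, so I do not expect any serious remaining obstacle.
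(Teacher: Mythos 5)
Your proof is correct and is essentially identical to the paper's: the paper likewise considers the conjugation homomorphism $\varphi\colon G\to \mathrm{Aut}(H)$, uses triviality of $Z(H)$ to get $H\cap\ker\varphi=\{1\}$, embeds $\ker\varphi$ (your $C=C_G(H)$) into the Jordan group $G/H$ via the projection, and finishes with Lemma \ref{JJJJ} using finiteness of $G/\ker\varphi$.
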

\begin{proof} The conjugating action of $G$ on $H$ determines a homomorphism $\varphi\colon G\to {\rm Aut}(H)$. The definition of $\varphi$ and triviality of the center of $H$ implies that
\begin{equation}\label{normal}
H\cap {\rm ker}\,\varphi=\{1\}.
\end{equation}
 In turn, \eqref{normal} yields that the restriction  of the natural projection $G\to G/H$ to ${\rm ker}\,\varphi$
is an embedding ${\rm ker}\,\varphi \hookrightarrow G/H$. Hence
${\rm ker}\,\varphi$ is Jordan since $G/H$ is Jordan. But $G/{\rm ker}\,\varphi$ is finite since it is isomorphic to a subgroup of ${\rm Aut}(H)$ for the finite group $H$. Whence $G$ is Jordan by Lemma \ref{JJJJ}. This completes the proof.
\end{proof}

We shall now discuss the notion of Jordan group in the frame of
%%the special class of groups, namely, the
automorphism groups of algebraic varieties.

Example \ref{aCre}, Theorems \ref{jtoral}, \ref{equivvv}
and their corollaries below give, for some varieties $X$, the affirmative answer to the following

\begin{question} \label{Jord} Let $X$ be an irreducible
affine
variety. Is it true that ${\rm Aut}(X)$ is Jordan{\rm?}
\end{question}

\begin{example}\label{aCre}  ${\rm Aut}({\bf A}\!^n)$  is Jordan for $n\leqslant 2$. For $n=1$ this is clear, for $n=2$ follows from Theorem \ref{Jt} and the well-known fact that every finite subgroup of ${\rm Aut}({\bf A}\!^2)$ is linearizable, i.e., conjugate to a subgroup of ${\bf GL}_2(k)$ (see also Subsection \ref{g} below).
\end{example}

\begin{theorem} \label{jtoral} The automorphism group of every irreducible
toral
variety $($see Definition {\rm \ref{def_toral}}$)$ is Jordan.
\end{theorem}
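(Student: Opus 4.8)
The plan is to split ${\rm Aut}(X)$ according to its action on the unit group of $k[X]$: this action lands in a bounded integral linear group, while its kernel turns out to be abelian, so that Lemma \ref{JJJJ} finishes the argument.

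First I would use that $X$ is toral to obtain, by Lemma \ref{toral}(a), that $k[X]$ is generated as a $k$-algebra by its group of units $k[X]^*$. Since $X$ is irreducible, the quotient $U:=k[X]^*/k^*$ is a finitely generated free abelian group (Rosenlicht's unit theorem; torsion-freeness is elementary, as $u^n\in k^*$ forces $u$ constant on an irreducible variety), say $U\cong{\bf Z}^r$. Each $\sigma\in{\rm Aut}(X)$ induces a $k$-algebra automorphism $\sigma^*$ of $k[X]$ that preserves $k[X]^*$ and fixes $k^*$ pointwise, hence acts on $U$; this yields a homomorphism $\rho\colon{\rm Aut}(X)\to{\rm Aut}(U)={\bf GL}_r({\bf Z})$.

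Next I would identify the kernel $N:=\ker\rho$. For $\sigma\in N$ and any unit $u\in k[X]^*$ one has $\sigma^*(u)=\lambda_\sigma(u)\,u$ for some $\lambda_\sigma(u)\in k^*$; multiplicativity of $\sigma^*$ together with $\sigma^*|_{k^*}={\rm id}$ shows that $\lambda_\sigma$ descends to a character $U\to k^*$. The resulting map $N\to{\rm Hom}(U,k^*)\cong(k^*)^r$ is a homomorphism, and it is injective: if $\lambda_\sigma\equiv 1$ then $\sigma^*$ fixes every unit, hence fixes all of $k[X]$ because the units generate it, so $\sigma={\rm id}$. Thus $N$ embeds into the abelian group $(k^*)^r$, so $N$ is abelian and therefore Jordan, with Jordan constant $1$.

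Finally, the image $\rho({\rm Aut}(X))$ is a subgroup of ${\bf GL}_r({\bf Z})$, which is bounded by Minkowski's theorem (Example \ref{bbbb}(2)); hence ${\rm Aut}(X)/N$ is bounded. Applying Lemma \ref{JJJJ} with $H=N$, the Jordan property of $N$ yields that ${\rm Aut}(X)$ is Jordan, as desired. The only input lying outside the lemmas already proved is Rosenlicht's theorem guaranteeing that $U$ is free of finite rank; I expect locating a clean citation for this finiteness to be the sole genuine subtlety, since the remainder is a direct assembly of the established results.
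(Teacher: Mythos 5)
Your proposal is correct and follows essentially the same route as the paper: the action of ${\rm Aut}(X)$ on the free, finite-rank group $k[X]^*/k^*$ (Rosenlicht), abelianness of the kernel via the fact that units generate $k[X]$ for toral $X$, boundedness of the image inside ${\bf GL}_r({\bf Z})$ by Minkowski, and Lemma \ref{JJJJ} to conclude. Your explicit embedding of the kernel into $(k^*)^r$ via characters is exactly what the paper records in the remark following its proof, so there is no substantive difference.
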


\begin{proof} By \cite{R2}, for any irreducible variety $X$, the abelian group
 $$\Gamma:=k[X]^*/k^*$$ is
 free and of
finite rank.
Let $X$ be toral and let
$H$ be the kernel of
the natural action of ${\rm Aut}(X)$ on $\Gamma$.
We claim that $H$ is abelian.
Indeed, for every element $f\in k[X]^*$, the line spanned by $f$ in $k[X]$
is $H$-stable. Since ${\bf GL}_1$ is abelian,
this yields that
\begin{equation}\label{hh}
h_1h_2\cdot f=h_2h_1\cdot f\qquad \mbox{for any elements\quad $h_1, h_2\in H$}.
 \end{equation}
   As $X$ is toral, $k[X]^*$ generates the $k$-algebra $k[X]$ by Lemma
\ref{toral}. Hence \eqref{hh} holds for every $f\in k[X]$. Since $X$ is affine, the automorphisms of $X$ coincide if and only if they induce the same automorphisms of  $k[X]$.  Whence $H$ is abelian, as claimed.

Let $n$ be the rank of $\Gamma$. Then ${\rm Aut}(\Gamma)$ is isomorphic to ${\bf GL}_n({\bf Z})$. By the definition of $H$, the natural action of
${\rm Aut}(X)$ on $\Gamma$ induces an embedding of ${\rm Aut}(X)/H$ into ${\rm Aut}(\Gamma)$.~Hence ${\rm Aut}(X)/H$ is isomorphic to a subgroup of ${\bf GL}_n({\bf Z})$.
Example \ref{bbbb}(2) then implies that ${\rm Aut}(X)/H$ is bounded. Thus,
${\rm Aut}(X)$ is an extension of a bounded group by an abelian group, hence Jordan by Lemma \ref{JJJJ}.
This completes the proof.
\end{proof}

\begin{remark} {\rm
Maintain the notation of the proof of Theorem \ref{jtoral}.
Let $f_1,\ldots f_n$ be a basis of $\Gamma$. There are the homomorphisms $\lambda_i\colon H\to k^*$,  $i=1,\ldots, n$, such that $g\cdot f_i=\lambda(g)f_i$ for every $g\in H$ and $i$.
Since $k[X]^*$ generates $k[X]$, the diagonal map
$
H\to (k^*)^n,\hskip 1mm h\mapsto (\lambda_1(g),\ldots,\lambda_n(g)),
$
is injective. This and the proof of Theorem \ref{jtoral} show that
the automorphism group of $X$ is an extension of a subgroup of
${\bf GL}_n({\bf Z})$ by a subgroup of the torus $(k^*)^n$.
}
\end{remark}

The following lemma is well-known (see, e.g., \cite[Lemma 2.7(b)]{FZ}).

\begin{lemma} \label {linea}
Let $\,X$ be a variety and let $\,G$ be a reductive algebraic subgroup of ${\rm Aut}(X)$. Let $\,x\in X$ be a fixed point of $\,G$. Then the kernel of the induced action of $\,G$ on ${\rm T}_{x, X}$ is trivial.
\end{lemma}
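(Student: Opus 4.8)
The plan is to descend to the local ring $\mathcal{O}_{X,x}$ at the fixed point and exploit the fact that, since $G$ is reductive and ${\rm char}\,k=0$, every algebraic $G$-module is semisimple. Because $x$ is fixed, $G$ acts on $\mathcal{O}_{X,x}$ by $k$-algebra automorphisms preserving the maximal ideal $\mathfrak m=\mathfrak m_x$ and all its powers; hence $G$ acts algebraically on each finite-dimensional quotient $\mathcal{O}_{X,x}/\mathfrak m^{n}$ and on each subquotient $\mathfrak m^{i}/\mathfrak m^{j}$. By definition the representation of $G$ on ${\rm T}_{x,X}$ is dual to its representation on the cotangent space $\mathfrak m/\mathfrak m^{2}$, so an element $g\in G$ lies in the kernel in question if and only if $g$ acts trivially on $\mathfrak m/\mathfrak m^{2}$. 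I fix such a $g$ and must show $g=1$, i.e. that $g$ acts trivially on $X$.

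The heart of the argument is to propagate triviality from $\mathfrak m/\mathfrak m^{2}$ up the $\mathfrak m$-adic filtration. First I would note that for each $n$ the multiplication map
\begin{equation*}
S^{n}(\mathfrak m/\mathfrak m^{2})\longrightarrow \mathfrak m^{n}/\mathfrak m^{n+1}
\end{equation*}
is a $G$-equivariant surjection, since $G$ acts by algebra automorphisms and $\mathfrak m^{n}$ is generated by $n$-fold products of elements of $\mathfrak m$. As $g$ acts trivially on $\mathfrak m/\mathfrak m^{2}$, it acts trivially on $S^{n}(\mathfrak m/\mathfrak m^{2})$, hence on the quotient $\mathfrak m^{n}/\mathfrak m^{n+1}$, for every $n$. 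Now each $\mathcal{O}_{X,x}/\mathfrak m^{n}$ is a finite-dimensional algebraic $G$-module carrying the $G$-stable filtration by the images of the $\mathfrak m^{i}$, whose successive quotients are the $\mathfrak m^{i}/\mathfrak m^{i+1}$; because these $G$-modules are semisimple the filtration splits $G$-equivariantly, so $\mathcal{O}_{X,x}/\mathfrak m^{n}$ is $G$-isomorphic to the direct sum of its graded pieces. Since $g$ is trivial on each piece it is therefore trivial on every $\mathcal{O}_{X,x}/\mathfrak m^{n}$. By Krull's intersection theorem $\bigcap_{n}\mathfrak m^{n}=0$ in the Noetherian local ring $\mathcal{O}_{X,x}$, whence $g$ acts trivially on $\mathcal{O}_{X,x}$ itself. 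I expect this passage from the associated graded to the filtered module to require the most care: it is exactly where semisimplicity of algebraic $G$-modules in characteristic zero is indispensable, and indeed the statement fails for non-reductive $G$ (for instance a ${\bf G}_a$ acting on ${\bf A}\!^2$ by $(a_1,a_2)\mapsto(a_1,a_2+ta_1^2)$ fixes the origin and acts trivially on the tangent space there, yet is nontrivial).

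It remains to globalize. Since $g$ acts trivially on $\mathcal{O}_{X,x}$, the automorphism $g$ and ${\rm id}_{X}$ induce the same comorphism on the stalk at $x$; as the coordinate ring of an affine neighbourhood of $x$ is finitely generated, clearing denominators shows that $g={\rm id}_{X}$ on some open neighbourhood $U$ of $x$. The locus $\{y\in X\mid g(y)=y\}$ is closed because $X$ is separated, and it contains the nonempty open set $U$; as $X$ is irreducible this locus is all of $X$. Hence $g={\rm id}_{X}$, so the kernel of the action of $G$ on ${\rm T}_{x,X}$ is trivial, as claimed.
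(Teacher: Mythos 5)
The paper does not actually prove this lemma: it is quoted as well known, with a pointer to \cite[Lemma 2.7(b)]{FZ}, so there is no in-paper argument to compare yours against. What you have written is the standard proof lying behind that reference, and its core is correct. The chain of steps --- the $G$-action on each $\mathcal{O}_{X,x}/\mathfrak{m}^{n}$, the $G$-equivariant surjections $S^{n}(\mathfrak{m}/\mathfrak{m}^{2})\to\mathfrak{m}^{n}/\mathfrak{m}^{n+1}$, complete reducibility of finite-dimensional algebraic modules of a reductive group in characteristic zero to split the $\mathfrak{m}$-adic filtration, Krull's intersection theorem, and the spreading-out from $\mathcal{O}_{X,x}$ to $X$ --- is sound, and your ${\bf G}_a$-example $(a_1,a_2)\mapsto(a_1,a_2+ta_1^2)$ correctly isolates the step where reductivity is indispensable. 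One minor point deserves a sentence rather than the word ``hence'': when $X$ is not affine, algebraicity of the $G$-action on $\mathcal{O}_{X,x}/\mathfrak{m}^{n}$ is not completely automatic (a $G$-stable affine neighbourhood of $x$ need not exist for non-normal $X$); it follows, e.g., by restricting the action morphism to $G\times x_n$, where $x_n$ is the $n$-th infinitesimal neighbourhood of $x$.

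The substantive issue is your last step, where you write ``as $X$ is irreducible.'' Irreducibility is not among the hypotheses: under the paper's conventions a variety need not be irreducible, nor even connected, and in that generality the statement is in fact false, so no proof could avoid some such assumption. For a connected counterexample, let $X\subset {\bf A}\!^3$ be the union of the three coordinate axes and let $g$ be the involution $(x_1,x_2,x_3)\mapsto(x_1,x_3,x_2)$; it preserves $X$, generates a finite (hence reductive) subgroup $G$ of ${\rm Aut}(X)$, fixes the point $x=(1,0,0)$, and acts trivially on ${\rm T}_{x,X}$ (indeed $g$ is the identity near $x$), yet $g\neq 1$. So your appeal to irreducibility is not an oversight to be repaired but a missing hypothesis of the lemma itself --- a caveat that also affects the paper's statement and its later applications to arbitrary varieties. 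You should therefore flag it explicitly. Note also that your argument, with one extra remark, proves the optimal statement: if every irreducible component of $X$ passes through $x$, then triviality of $g$ on $\mathcal{O}_{X,x}$ forces $g$ to fix each minimal prime, hence each component, and your irreducible-case argument applies component by component.
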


\begin{theorem}\label{equivvv}
Let
$\sim$ be
the equiva\-lence relation
on
the
set of
points of
a variety $\,X$
defined~by
\
\vskip -7mm

\

$$ x\sim y \iff \mbox{the local rings of $\,X$ at $x$ and $y$ are $k$-isomorphic}.
$$

\vskip 1mm

\noindent If there is a finite equivalence class of $\sim$, then
${\rm Aut}(X)$ is Jordan.
\end{theorem}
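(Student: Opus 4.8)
The plan is to isolate a normal subgroup $N$ of ${\rm Aut}(X)$ whose quotient is bounded and which is itself Jordan, and then to invoke Lemma~\ref{JJJJ}.

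First I would record how ${\rm Aut}(X)$ interacts with $\sim$. Any $\varphi\in{\rm Aut}(X)$ induces, via its comorphism, a $k$-isomorphism between the local ring of $X$ at $\varphi(x)$ and the local ring of $X$ at $x$; thus $x\sim\varphi(x)$ for every point $x$. Consequently the ${\rm Aut}(X)$-orbit of each point lies inside the $\sim$-class of that point, and in particular every $\sim$-class is ${\rm Aut}(X)$-stable. Let $C$ be a finite $\sim$-class, which exists by hypothesis. The action of ${\rm Aut}(X)$ on the finite set $C$ yields a homomorphism $\rho\colon{\rm Aut}(X)\to{\rm Sym}(C)$ with values in a finite group. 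Setting $N:=\ker\rho$, the quotient ${\rm Aut}(X)/N$ embeds into the finite group ${\rm Sym}(C)$ and is therefore bounded, so by Lemma~\ref{JJJJ} it remains only to prove that $N$ is Jordan.

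For this I fix once and for all a point $x\in C$. Every element of $N$ fixes $x$, hence so does every finite subgroup $K\subseteq N$. Such a $K$ is $0$-dimensional with trivial unipotent radical, i.e. a reductive algebraic subgroup of ${\rm Aut}(X)$ fixing $x$, so Lemma~\ref{linea} applies and shows that the induced linear action of $K$ on ${\rm T}_{x,X}$ is faithful. Writing $m:=\dim_k{\rm T}_{x,X}$, this exhibits $K$ as isomorphic to a finite subgroup of ${\bf GL}_m(k)$. The decisive point is that $m$ depends only on $x$ and not on $K$, so a single group ${\bf GL}_m(k)$ receives all finite subgroups of $N$. Since ${\bf GL}_m(k)$ is Jordan by Theorem~\ref{Jt}, each such $K$ contains a normal abelian subgroup of index at most $J_{{\bf GL}_m(k)}$; as this bound is uniform in $K$, the group $N$ is Jordan, and Lemma~\ref{JJJJ} then gives that ${\rm Aut}(X)$ is Jordan.

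I expect the only genuinely delicate point to be this uniformity. The definition of Jordan group demands one constant $J_N$ valid simultaneously for all finite subgroups, whereas Lemma~\ref{linea} is applied to each finite subgroup separately; what makes the bound uniform is precisely that all of $N$ shares the common fixed point $x$, so that the ambient embedding dimension $m=\dim_k{\rm T}_{x,X}$ is one and the same for every finite $K\subseteq N$. Beyond this, the routine verifications are that a finite subgroup really does qualify as a reductive algebraic subgroup so that Lemma~\ref{linea} may be invoked, and that the faithful action on ${\rm T}_{x,X}$ produces an injective homomorphism $K\hookrightarrow{\bf GL}_m(k)$ through which a normal abelian subgroup of index $\leqslant J_{{\bf GL}_m(k)}$ pulls back to $K$.
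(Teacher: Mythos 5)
Your proposal is correct and follows essentially the same route as the paper: both pass to the kernel $N$ (the paper's $G$) of the action of ${\rm Aut}(X)$ on a finite $\sim$-class $C$, which has finite (hence bounded) quotient, and then handle each finite subgroup $K\subseteq N$ via its faithful linear action on ${\rm T}_{x,X}$ for a fixed $x\in C$, using Lemma~\ref{linea}, Theorem~\ref{Jt}, and Lemma~\ref{JJJJ}. Your explicit remark that the uniformity of the bound comes from the common fixed point $x$ (so that $m=\dim_k{\rm T}_{x,X}$ is the same for all $K$) is exactly the point the paper leaves implicit.
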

\begin{proof} Every equivalence class of $\sim$ is ${\rm Aut}(X)$-stable. Let $C$ be a finite equivalence class of $\sim$ and let $G$ be the kernel of the action of ${\rm Aut}(X)$ on $C$. Then $G$ is a normal subgroup of finite index in ${\rm Aut}(X)$. By Lemma \ref{JJJJ} it suffices to prove that $G$ is Jordan.

Let $K$ be a finite subgroup of $G$ and let $x$ be a point of $C$.
As $x$ is fixed by $K$, the action of $K$ on $X$ induces an action of $K$ on ${\rm T}_{x, X}$. The latter is linear and hence determined by a homomorphism $\tau\colon K\to {\bf GL}({\rm T}_{x, X})$. Being finite, $K$ is reductive.  Hence $\tau$ is injective by Lemma \ref{linea}. Theorem \ref{Jt} then yields that $K$ contains an abelian
normal subgroup $A$ such that
$
[K:A]\leqslant J^{\ }_{{\bf GL}_n(k)},\hskip 1mm
n:=\dim {\rm T}_{x, X}.
$
This completes the proof.
\end{proof}

Given a variety $X$, we say that its point $x$ is a {\it vertex} of $X$ if
\begin{equation*}
\dim {\rm T}_{x, X}\geqslant \dim {\rm T}_{y, X}\hskip 2mm\mbox{for every point}\hskip 2mm y\in X.
\end{equation*}
Clearly, an irreducible $X$ is smooth if and only if
every its point is a vertex.

\begin{corollary} \label{nonsmooth}
The automorphism group of every variety
with only finitely many vertices is Jordan.
\end{corollary}

\begin{corollary}\label{coooone}
Let
$\approx$ be
the equiva\-lence relation
on
the
set of
points of
a variety $\,X$
defined~by
\

\vskip -2.8mm

\
$$ x\approx y \iff \mbox{the tangent cones of $\,X$ at $\,x$ and $y$ are isomorphic}.
$$

\vskip 1mm

\noindent If there is a finite equivalence class of $\approx$, then
${\rm Aut}(X)$ is Jordan.
\end{corollary}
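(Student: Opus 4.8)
The plan is to reduce Corollary \ref{coooone} to Theorem \ref{equivvv} by showing that the equivalence relation $\approx$ refines the relation $\sim$, or more precisely that a finite equivalence class of $\approx$ gives rise to a finite union of equivalence classes of $\sim$. The key observation is that the tangent cone of $X$ at a point $x$ is an invariant of the local ring of $X$ at $x$: if the local rings at $x$ and $y$ are $k$-isomorphic, then their associated graded rings with respect to the maximal ideals are $k$-isomorphic, and these associated graded rings are precisely the coordinate algebras of the tangent cones. Hence $x\sim y$ implies $x\approx y$, so $\approx$ is a coarser relation than $\sim$, and every $\approx$-class is a union of $\sim$-classes.

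First I would make precise the fact that the tangent cone at $x$ is determined by the local ring $\mathcal O_{x,X}$. Writing $\mathfrak m$ for the maximal ideal of $\mathcal O_{x,X}$, the tangent cone is ${\rm Spec}$ of the associated graded algebra $\bigoplus_{i\geqslant 0}\mathfrak m^i/\mathfrak m^{i+1}$, and a $k$-algebra isomorphism of local rings carries maximal ideal to maximal ideal and therefore induces an isomorphism of these graded algebras. This yields the implication $x\sim y\Rightarrow x\approx y$ directly.

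The main step is then the following counting argument. Let $C$ be a finite $\approx$-class. Since $\sim$ refines $\approx$, the class $C$ is partitioned into $\sim$-classes; being a subset of a finite set, each such $\sim$-class is finite, and in particular $C$ contains at least one finite $\sim$-class (indeed every $\sim$-class meeting $C$ is contained in $C$ and hence finite). Therefore the hypothesis of Theorem \ref{equivvv} is satisfied, and I conclude that ${\rm Aut}(X)$ is Jordan.

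I expect the only genuine subtlety to lie in the first step, namely confirming that ``the tangent cones are isomorphic'' is a strictly weaker condition than ``the local rings are $k$-isomorphic,'' so that $\approx$-classes are unions of $\sim$-classes rather than the other way around; once this direction of refinement is fixed, the reduction to Theorem \ref{equivvv} is immediate. Concretely I would write: every $\sim$-class contained in the finite $\approx$-class $C$ is finite, so Corollary \ref{coooone} follows from Theorem \ref{equivvv}.
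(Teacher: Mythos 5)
Your proof is correct and follows exactly the route the paper intends: Corollary \ref{coooone} is stated as an immediate consequence of Theorem \ref{equivvv}, the point being precisely that the tangent cone at $x$ is ${\rm Spec}$ of the associated graded ring of the local ring at $x$, so $\sim$ refines $\approx$ and a finite $\approx$-class contains a finite $\sim$-class. Nothing is missing; your filled-in details (including the nonemptiness of the class guaranteeing at least one finite $\sim$-class) are exactly what the paper leaves implicit.
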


\begin{corollary}\label{sssing}  The automorphism group of every nonsmooth
variety with only finitely many  singular points is Jordan.
\end{corollary}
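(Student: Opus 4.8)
The plan is to reduce the assertion to Theorem \ref{equivvv}, whose hypothesis requires only that the equivalence relation $\sim$ (given by $k$-isomorphism of local rings) possess a single finite equivalence class. Since $X$ is nonsmooth, its singular locus ${\rm Sing}(X)$ is nonempty, and by assumption it is finite; so I would begin by fixing any singular point $x\in{\rm Sing}(X)$ and show that its $\sim$-class already furnishes the finite class demanded by Theorem \ref{equivvv}.

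The key observation is that smoothness of a point is an intrinsic property of its local ring: a point $y\in X$ is smooth precisely when the local ring $\mathcal{O}_{X,y}$ is regular. Regularity is preserved under $k$-isomorphism of local rings, so if $\mathcal{O}_{X,y}$ is $k$-isomorphic to the non-regular ring $\mathcal{O}_{X,x}$, then $y$ too is singular. Hence the entire $\sim$-equivalence class of $x$ is contained in ${\rm Sing}(X)$, and since the latter is finite by hypothesis, this class is finite. Applying Theorem \ref{equivvv} then yields at once that ${\rm Aut}(X)$ is Jordan.

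There is no substantive obstacle here: the only point requiring verification is that $k$-isomorphism of local rings respects the regular/singular dichotomy, which is routine commutative algebra. The one genuine design choice is to argue via Theorem \ref{equivvv} rather than via the cruder Corollary \ref{nonsmooth}. This matters because the present argument never invokes irreducibility and therefore applies verbatim to reducible $X$ as well, whereas Corollary \ref{nonsmooth} can fail in the reducible case: on a variety with components of differing dimension, the smooth points of a top-dimensional component attain the maximal tangent-space dimension, so the vertices are infinite in number and that corollary does not apply. Routing the proof through the local-ring relation $\sim$ sidesteps this difficulty entirely.
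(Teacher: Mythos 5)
Your argument is correct and is exactly the reasoning the paper intends: Corollary \ref{sssing} is stated without proof as an immediate consequence of Theorem \ref{equivvv}, the point being precisely that regularity of $\mathcal{O}_{X,x}$ is invariant under $k$-isomorphism, so the $\sim$-class of any singular point lies inside the finite singular locus. Your closing observation that this route (unlike the vertex-counting Corollary \ref{nonsmooth}) needs no irreducibility and survives components of different dimensions is a nice supplementary remark, but it is the same proof.
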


\begin{corollary}\label{connn}
Let
${\widehat X}\subset k^{n+1}$ be the affine cone of a smooth closed proper
subvariety $\,X$ in $\,{\bf P}^n={\bf P}(k^{n+1})$
that does not lie in any hyperplane.
Then
${\rm Aut}(\widehat X)$ is Jordan.
\end{corollary}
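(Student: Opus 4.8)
The plan is to show that $\widehat X$ has exactly one singular point, namely its vertex, the origin $0\in k^{n+1}$, and then to invoke Corollary \ref{sssing}. Equivalently, one checks that $0$ is the unique vertex of $\widehat X$ and applies Corollary \ref{nonsmooth}. So the whole task reduces to a local analysis of $\widehat X$ at and away from $0$.

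First I would treat the punctured cone $\widehat X\setminus\{0\}$. The projection $k^{n+1}\setminus\{0\}\to {\bf P}^n$ restricts to a morphism $\widehat X\setminus\{0\}\to X$ whose fibres are the ${\bf G}_m$-orbits of the scaling action preserving the cone; this is a principal ${\bf G}_m$-bundle, in particular a smooth surjective morphism. Since $X$ is smooth, it follows that every point of $\widehat X\setminus\{0\}$ is a smooth point of $\widehat X$, with tangent space of dimension $\dim\widehat X=\dim X+1$.

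Next I would examine the origin, where the hypothesis that $X$ lies in no hyperplane enters decisively: it says precisely that the homogeneous ideal of $X$ in $k[x_0,\ldots,x_n]$ contains no nonzero linear form. Choosing homogeneous generators of this ideal, all of them therefore have degree $\geqslant 2$, so their linear parts vanish. Consequently the Zariski tangent space ${\rm T}_{0,\widehat X}$, cut out by these linear parts, is all of $k^{n+1}$ and has dimension $n+1$. Since $X$ is a proper closed subvariety of ${\bf P}^n$ we have $\dim X\leqslant n-1$, whence $\dim\widehat X\leqslant n<n+1=\dim{\rm T}_{0,\widehat X}$; thus $0$ is a singular point of $\widehat X$. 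Combined with the previous paragraph this shows that $0$ is the \emph{unique} singular point of $\widehat X$, and that it is the unique vertex, its tangent space being strictly larger than that at any (smooth) point of $\widehat X\setminus\{0\}$.

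Finally, $\widehat X$ is a nonsmooth variety with only finitely many (in fact one) singular points, so Corollary \ref{sssing} yields that ${\rm Aut}(\widehat X)$ is Jordan. The argument is essentially routine; the only point requiring care is the tangent-space computation at the vertex, where nondegeneracy of $X$ is exactly what guarantees that $0$ is singular rather than $\widehat X$ being a linear subspace on which the cone construction degenerates.
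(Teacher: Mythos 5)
Your proposal is correct and takes essentially the same approach as the paper: the paper's proof simply asserts that the hypotheses force the singular locus of $\widehat X$ to be the single point at the origin and then invokes Corollary \ref{sssing}. Your write-up supplies the verification the paper leaves implicit (smoothness of $\widehat X\setminus\{0\}$ via the ${\bf G}_m$-bundle over the smooth $X$, and singularity at $0$ via the tangent-space computation using nondegeneracy), so it is the same argument with the details filled in.
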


\begin{proof} The assumptions imply that the singular locus of $\widehat X$ consists of a single point, the origin; whence the claim by Corollary \ref{sssing}.
\end{proof}

\begin{remark}{\rm  Smoothness in Corollary \ref{connn}  may be replaced by the assumption that $X$ is not a cone. Indeed, in this case the origin constitutes a single  equivalence class of $\approx$ for points of $\widehat X$; whence the claim by Corollary \ref{coooone}.}
\end{remark}

\begin{theorem}\label{aaa} For every variety $X$, every
finite subgroup $\,G$ of ${\rm Aut}(X)$ such that $X^G\neq \varnothing$ contains an abelian normal subgroup
whose index in $\,G$ is at most $J_{{\bf GL}_d(k)}$ where
$d=\underset{x}{\max} \dim {\rm T}_{x, X}$.
\end{theorem}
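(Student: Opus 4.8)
The plan is to linearize the $G$-action at a fixed point and then invoke Jordan's theorem, in the same spirit as the proof of Theorem \ref{equivvv}. First I would use the hypothesis $X^G\neq\varnothing$ to fix a point $x\in X$ that is fixed by every element of $G$. The finite group $G$ then acts on the Zariski tangent space ${\rm T}_{x,X}$, and this action is linear, so it is encoded by a homomorphism $\tau\colon G\to {\bf GL}({\rm T}_{x,X})$.

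Next I would observe that, since $G$ is finite and ${\rm char}\,k=0$, it is a reductive algebraic subgroup of ${\rm Aut}(X)$, and $x$ is a fixed point of $G$. Hence Lemma \ref{linea} applies and shows that the kernel of the induced action on ${\rm T}_{x,X}$ is trivial, i.e. $\tau$ is injective. Writing $m:=\dim {\rm T}_{x,X}$ and identifying ${\bf GL}({\rm T}_{x,X})\cong {\bf GL}_m(k)$, this exhibits $G$ as a finite subgroup of ${\bf GL}_m(k)$.

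Finally, because $m\leqslant d$ by the definition of $d=\max_x \dim {\rm T}_{x,X}$, the block-diagonal inclusion ${\bf GL}_m(k)\hookrightarrow {\bf GL}_d(k)$ realizes $G$ as a finite subgroup of ${\bf GL}_d(k)$. By Jordan's theorem (Theorem \ref{Jt}) the group ${\bf GL}_d(k)$ is Jordan, so $G$ contains a normal abelian subgroup $A$ with $[G:A]\leqslant J_{{\bf GL}_d(k)}$, which is exactly the desired conclusion.

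The argument has no genuine obstacle; the only point requiring care is the \emph{uniformity} of the bound. Rather than settling for the a priori smaller constant $J_{{\bf GL}_m(k)}$ attached to each individual $G$, I would pass through the fixed embedding ${\bf GL}_m(k)\hookrightarrow {\bf GL}_d(k)$ so that the single constant $J_{{\bf GL}_d(k)}$, which depends only on $X$ through $d$ and not on the particular $G$ or the chosen fixed point $x$, governs every admissible $G$ simultaneously. This is precisely the feature that makes the statement a strengthening of Theorem \ref{equivvv}: there a finite equivalence class supplied a common finite fixed locus, whereas here the hypothesis $X^G\neq\varnothing$ furnishes the needed fixed point directly.
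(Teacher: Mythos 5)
Your proof is correct and follows exactly the paper's own argument: the paper proves Theorem \ref{aaa} by the same linearization at a point of $X^G$, invoking Lemma \ref{linea} (finite, hence reductive, subgroup acting faithfully on ${\rm T}_{x,X}$) and then Jordan's theorem (Theorem \ref{Jt}), just as in the proof of Theorem \ref{equivvv}. Your explicit remark about passing through the embedding ${\bf GL}_m(k)\hookrightarrow{\bf GL}_d(k)$ to get the uniform constant $J_{{\bf GL}_d(k)}$ is a point the paper leaves implicit, but it is the same argument.
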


\begin{proof} Like in the above proof of Theorem \ref{equivvv},
  this follows from Lemma \ref{linea} and Theorem \ref{Jt}.
\end{proof}

 %%The argument also shows that one can take $t^{}_X=J_{{\bf %%GL}_d(k)}$  for $d=\underset{x}{\max} \dim {\rm T}_{x, X}$.

%%\begin{example}
\begin{corollary}\label{ccccccc} Let $p$ be a prime number. Then every finite
$p$-subgroup $\,G$ of
%%the affine Cremona group
${\rm Aut}({\bf A}\!^n)$ contains an abelian normal subgroup
whose index in $\,G$ is at most $J_{{\bf GL}_n(k)}$.
%%$A$ such that $[G:A]\leqslant J_{{\bf GL}_n(k)}$.
%%t^{}_{{\bf A}\!^n}$.
\end{corollary}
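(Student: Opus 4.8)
The plan is to show that this corollary follows almost immediately from Theorem \ref{aaa}, once I verify that its two hypotheses are met for a finite $p$-subgroup of $\mathrm{Aut}(\mathbf{A}^n)$. Theorem \ref{aaa} requires a fixed point (so that $X^G \neq \varnothing$) and it delivers an abelian normal subgroup of index at most $J_{\mathbf{GL}_d(k)}$ where $d = \max_x \dim \mathrm{T}_{x,X}$. For $X = \mathbf{A}^n$ the variety is smooth of dimension $n$, so $\dim \mathrm{T}_{x,\mathbf{A}^n} = n$ at every point; hence $d = n$ and the claimed bound $J_{\mathbf{GL}_n(k)}$ matches. The only real content, therefore, is to establish that a finite $p$-subgroup $G \subseteq \mathrm{Aut}(\mathbf{A}^n)$ has a fixed point on $\mathbf{A}^n$.

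First I would address the fixed-point claim, which I expect to be the main (indeed the only) obstacle. The standard tool is a fixed-point theorem for $p$-groups acting on affine space: one knows that a finite $p$-group acting algebraically on $\mathbf{A}^n$ over an algebraically closed field must have a fixed point. The cleanest route is via the topological (or $\ell$-adic) Euler characteristic: since $\mathbf{A}^n$ has Euler characteristic $1$, and the fixed-point locus $(\mathbf{A}^n)^G$ of a finite $p$-group satisfies $\chi((\mathbf{A}^n)^G) \equiv \chi(\mathbf{A}^n) \equiv 1 \pmod{p}$ by the standard congruence for $p$-group actions, the fixed locus is nonempty. Alternatively one can argue by induction on the order of $G$, reducing to a central subgroup of order $p$ whose fixed locus is a nonempty closed $G$-stable subvariety, then restricting the action. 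Either way, $(\mathbf{A}^n)^G \neq \varnothing$.

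With a fixed point in hand, the corollary is a direct invocation of Theorem \ref{aaa}: taking $X = \mathbf{A}^n$, the hypothesis $X^G \neq \varnothing$ holds, and $d = \dim \mathrm{T}_{x,\mathbf{A}^n} = n$ for every $x$, so the maximum $d$ equals $n$. Theorem \ref{aaa} then produces an abelian normal subgroup of $G$ whose index is at most $J_{\mathbf{GL}_n(k)}$, which is exactly the assertion. Thus the entire proof consists of the fixed-point input followed by a one-line appeal to the preceding theorem, and I would expect the write-up to be correspondingly short, spending its substance on justifying the existence of a $G$-fixed point.
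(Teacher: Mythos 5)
Your proposal is correct and follows exactly the paper's route: the paper also deduces the corollary from Theorem \ref{aaa} by noting that $({\bf A}\!^n)^G\neq\varnothing$ for a finite $p$-group $G$, citing Serre \cite[Theorem 1.2]{Se3} for that fixed-point fact rather than sketching the mod-$p$ Euler-characteristic (or reduction) argument as you do. The only difference is that you supply a proof sketch of the fixed-point statement where the paper gives a reference, so the substance is the same.
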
%%{example}

\begin{proof}
 This follows from Theorem \ref{aaa} since in this case $({\bf A}\!^n)^G\neq \varnothing$, see \cite[Theorem 1.2]{Se3}.
\end{proof}

\begin{remark}{\rm To date, it is not known whether or not
$({\bf A}\!^{n})^G\neq\varnothing$ for every
finite subgroup $\,G$ of ${\rm Aut}({\bf A}\!^{n})$. By Theorem
\ref{aaa} the affirmative answer would imply that ${\rm Aut}({\bf A}\!^{n})$ is Jordan.
}
\end{remark}

\begin{remark} {\rm The statement
of Corollary \ref{ccccccc}
remains true if ${\bf A}\!^n$ is replaced by any $p$-acyclic variety $X$
and $n$ in $J_{{\bf GL}_n(k)}$ by   $\underset{x}{\max} \dim {\rm T}_{x, X}$.
This is because
in this case
 $X^G\neq \varnothing$ for every finite $p$-subgroup $G$ of ${\rm Aut}(X)$,
see \cite[Sect.\;7--8]{Se3}.}
\end{remark}

\begin{theorem}
For every variety $X$,
there is an integer $m^{}_X$
such that any finite subgroup
$\,G$
of  any connected linear
algebraic subgroup $\,L$
of $\,{\rm Aut}(X)$ contains
an abelian normal subgroup whose index in $\,G$ is at most $m^{}_X$.
\end{theorem}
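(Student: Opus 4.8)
The plan is to reduce, for a fixed $X$, the Jordan constant of a finite subgroup $G$ of a connected linear algebraic subgroup $L\subseteq {\rm Aut}(X)$ to Jordan's theorem for a general linear group ${\bf GL}_N(k)$ in which $N$ depends only on $X$ and not on $L$. First I would pass from $L$ to its reductive quotient. Since ${\rm char}\,k=0$, the group $L$ is the semidirect product of its unipotent radical ${\rm Rad}_uL$ and a connected reductive subgroup $M$, and $L/{\rm Rad}_uL\cong M$. As ${\rm Rad}_uL$ is unipotent it is torsion free, so $G\cap {\rm Rad}_uL=\{1\}$ and the projection $L\to M$ restricts to an embedding $G\hookrightarrow M$ (this is the mechanism already used in Lemma \ref{HHH}). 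Thus it suffices to bound, uniformly in $X$, the Jordan constant of finite subgroups of the connected reductive group $M$.

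The crucial point is that the rank of $M$ is bounded in terms of $X$ alone. A maximal torus $T$ of $L$ meets ${\rm Rad}_uL$ trivially and maps isomorphically onto a maximal torus of $M$, so ${\rm rank}\,M=\dim T$, and $T$, being a subgroup of ${\rm Aut}(X)$, acts faithfully on $X$. I would invoke the auxiliary fact that a torus acting faithfully on an irreducible variety $Y$ has dimension at most $\dim Y$: by the existence of a generic stabilizer in characteristic zero, the stabilizers $T_y$ for $y$ in a dense open set coincide with a fixed subgroup $T_0$, which then fixes a dense subset and hence all of $Y$, so is trivial by faithfulness; therefore the generic orbits have dimension $\dim T$ and $\dim T\le \dim Y$. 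Applying this to each of the finitely many irreducible components $X_1,\dots,X_c$ of $X$, which are $L$-stable because $L$ is connected, and using that $T$ embeds diagonally into $\prod_i T/\ker(T\to {\rm Aut}(X_i))$ with trivial kernel, I get ${\rm rank}\,M=\dim T\le \sum_i \dim X_i\le c\cdot\dim X=:r$, a quantity depending only on $X$.

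Finally I would turn the bound on the rank into a uniform bound on an ambient general linear group. By the classification of connected reductive groups via their root data there are, for each $r$, only finitely many isomorphism classes of connected reductive groups of rank $\leqslant r$; let $N=N(r)$ bound the dimensions of their faithful representations. Then $M$, having rank $\leqslant r$, embeds into ${\bf GL}_N(k)$, whence $G\hookrightarrow M\hookrightarrow {\bf GL}_N(k)$, and Theorem \ref{Jt} furnishes an abelian normal subgroup of $G$ of index at most $J_{{\bf GL}_N(k)}$. Setting $m_X:=J_{{\bf GL}_{N(r)}(k)}$ with $r=c\cdot\dim X$ gives a constant depending on $X$ only, as required. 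I expect the main obstacle to be precisely the uniform control of the rank, namely the faithful-torus bound together with the passage from ``reductive of bounded rank'' to ``embeddable in a fixed ${\bf GL}_N(k)$''; once the rank is pinned to $X$, the reduction to the classical Jordan theorem is routine.
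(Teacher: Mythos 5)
Your proposal is correct and takes essentially the same route as the paper: reduce $G$ to a finite subgroup of a connected reductive group via the Levi decomposition of $L$, bound the rank of that reductive group by a constant depending only on $X$ using faithfulness of the action, and finish with the finiteness of connected reductive groups of bounded rank together with Jordan's theorem for linear groups. The differences are only in implementation: the paper places $G$ inside a maximal reductive (Levi) subgroup $R$ of $L$ and cites \cite[\S3]{Po2} for ${\rm rk}\,R\leqslant \dim X$, whereas you inject $G$ into $L/{\rm Rad}_uL$ via torsion-freeness of the unipotent radical, prove the rank bound yourself by a generic-stabilizer argument (treating reducible $X$ component by component, which is in fact the more careful statement), and embed the finitely many reductive groups into a single ${\bf GL}_N(k)$ rather than taking the maximum of their Jordan constants.
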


\begin{proof}
 Being reductive, $G$ is contained in a maximal reductive subgroup $R$ of $L$. Then $R$ is a Levi subgroup, i.e., $L$ is a semidirect product of $R$ and  ${\rm Rad}_uL$, cf., e.g., \cite[Chap.\,6]{OV}.
As $L$ is connected, $R$ is connected as well. Since the kernel of the action of
$R$ on $X$ is trivial,
${\rm rk}\,R\leqslant \dim X$, see\;\cite[\S3]{Po2}.
 The claim then follows from Theorem \ref{lag} as
 there are only finitely many connected reductive groups
of rank at most $\dim X$.
\end{proof}

 \subsection{Generalizations}\label{g}
 One may ask whether  ``affine'' in Question~\ref{Jord} can be drop\-ped:

\begin{question} \label{QA}
Is there an irreducible variety $X$
such that ${\rm Aut}(X)$ is not Jordan{\rm?}
\end{question}

The
negative  answer to Question \ref{QA}
would follow from that to

\begin{question} \label{QB}
Is there an irreducible variety $X$
such that ${\rm Bir}(X)$ is not Jordan{\rm ?}
\end{question}

In Theorem  \ref{CS} below we answer Question \ref{QB}
%%Question \ref{QB}
for curves and surfaces.

\vskip 1.5mm

{\it Curves.}

\smallskip

If $X$ is a curve, then the answer to Question \ref{QB}
%%and hence to Question \ref{QA}
is negative.

Proving this
%%Indeed,
we may assume that
$X$ is smooth and projective.\;Then ${\rm Bir}(X)\!=\!{\rm Aut}(X)$.

If $g(X)$, the genus of $X$, is $0$,
then
$X={\bf P}^1$,
hence ${\rm Bir}(X)={\bf PGL}_2(k)$, so
${\rm Bir}(X)$ is Jordan by Theorem \ref{lag}.

If $g(X)=1$, then $X$ is an elliptic curve; whence
${\rm Bir}(X)$ is
the extension of a finite group
by the abelian algebraic group $X$, hence Jordan by Lemma \ref{JJJJ}.

 If $g(X)\geqslant 2$, then ${\rm Bir}(X)$ is finite, hence Jordan.

 Note that all curves (not necessarily smooth and projective)
with infinite automorphism group are classified in~\cite{Po3}.

\vskip 1.5mm

{\it Surfaces.}

\smallskip

Answering Question \ref{QB} for surfaces $X$, we may assume that $X$ is a smooth
projective minimal model.

If $X$ is of general type, then  by
Matsumura's
theorem
${\rm Bir}(X)$ is finite, hence Jordan.

If $X$ is rational, then ${\rm Bir}(X)$ is
the planar Cremona group over $k$, hence Jordan by \cite[Theorem 5.3]{Se1},
\cite[Th\'eor\`eme 3.1]{Se2}.

If $X$ is a nonrational ruled surface, it is birationally isomorphic to ${\bf P}^1\times B$ where $B$ is a smooth projective curve
such that
$g(B)>0$; we may then take $X={\bf P}^1\times B$.
 As $g(B)>0$, there are no dominant rational maps ${\bf P}^1\to B$, hence
the elements of ${\rm Bir}(X)$ permute the fibers of
the natural projection  ${\bf P}^1\times B\to B$.
The set of ele\-ments
 inducing trivial permutation is a normal subgroup ${\rm Bir}^{}_B(X)$ of ${\rm Bir}(X)$.
The definition implies that ${\rm Bir}^{}_B(X)={\bf PGL}_2(k(B))$, hence Jordan by Theorem \ref{lag}.
Naturally identifying ${\rm Aut}(B)$ with the subgroup of ${\rm Bir}(X)$, we get the decomposition
 ${\rm Bir}(X)={\rm Bir}^{}_B(X)\rtimes {\rm Aut}(B)$.
 Note that
${\rm Aut}(X)={\bf PGL}_2(k)\times {\rm Aut}(B)\neq {\rm Bir}(X)$ (see \cite[pp.\;98--99]{Mar}), so ${\rm Aut}(X)$ is Jordan by Lemma \ref{product}.
 Let $g(B)\geqslant 2$. Then ${\rm Aut}(B)$ is finite, hence
$[{\rm Bir}(X):{\rm Bir}^{}_B(X)]<\infty$.
Lemma \ref{JJJJ} then implies that ${\rm Bir}(X)$ is Jordan.
For $g(B)=1$, this argument does not work as $B$ is an elliptic curve
and hence ${\rm Aut}(B)$ is infinite. In fact, by \cite{Z}, if $B$ is an elliptic curve, then
${\rm Bir}(X)$ is {\it not} Jordan (this dispelled a hope expressed in the earlier preprint of the present paper {\tt arXiv:1001.1311v2\,[math.AG]\,6\,Feb\,2010}).

The canonical class of all other surfaces $X$ is numerically effective, so, for them, ${\rm Bir}(X)={\rm Aut}(X)$, cf.\;\cite[Sect.\,7.1, Theorem 1 and Sect.\;7.3, Theorem\;2]{IS}.

Let $X$ be such a surface.\;The group ${\rm Aut}(X)$ has the structure of a locally algebraic group with finite or countably many components, i.e.,  there is a normal subgroup ${\rm Aut}(X)^0$ in ${\rm Aut}(X)$ such that
\begin{enumerate}
\item[(i)]  ${\rm Aut}(X)^0$ is a connected algebraic group; and
\item[(ii)]
${\rm Aut}(X)/{\rm Aut}(X)^0$ is either finite or countable group,
\end{enumerate}
see \cite{Mat}.
By (i) and the structure theorem on algebraic groups \cite{Ba}, \cite{R1}
there is a normal connected linear algebraic subgroup $L$ of ${\rm Aut}(X)^0$ such that
${\rm Aut}(X)^0/L$ is an abelian variety. By \cite[Cor.\,1]{Matm} nontriviality of $L$ would imply that $X$ is ruled. As we assumed that $X$ is not ruled, this means that $L$ is trivial, i.e., ${\rm Aut}(X)^0$ is an abelian variety. Hence, ${\rm Aut}(X)^0$ is an abelian and, therefore, a Jordan group.

By (i) the group ${\rm Aut}(X)^0$ is contained in the kernel of the natural action of ${\rm Aut}(X)$ on $H^2(X, {\bf Q})$ (we may assume that $k={\bf C})$. Therefore, this action defines a homomorphism  ${\rm Aut}(X)/{\rm Aut}(X)^0\to {\bf GL}(H^2(X, {\bf Q}))$. The kernel of this homomorphism is finite by
\cite[Prop.\,1]{Do}, and  the image is a bounded by Example \ref{bbbb}(2).
By Example \ref{bbbb}(1),(3) this yields that ${\rm Aut}(X)/{\rm Aut}(X)^0$ is bounded. In turn, as ${\rm Aut}(X)^0$ is Jordan, by Lemma \ref{JJJJ} this implies that ${\rm Aut}(X)$ is Jordan.

This completes the proof of the following

 \begin{theorem}\label{CS} Let $X$ be an irreducible variety of dimension $\leqslant 2$. Then the following properties are equivalent{\rm:}
 \begin{enumerate}
 \item[\rm(a)] the group ${\rm Bir}(X)$ is Jordan;
  \item[\rm(b)]  the variety $X$ is not birationally isomorphic to ${\bf P}^1\times B$, where $B$ is an elliptic curve.
      \end{enumerate}
\end{theorem}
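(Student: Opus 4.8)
The plan is to exploit that ${\rm Bir}(X)$ depends only on the birational class of $X$, so throughout I would replace $X$ by any convenient model. The equivalence splits into two implications. The implication (a)$\Rightarrow$(b) is the assertion that the excluded case is genuinely exceptional: if $X$ is birational to ${\bf P}^1\times B$ with $B$ elliptic, then ${\rm Bir}(X)$ fails to be Jordan, and I would take this from \cite{Z}. The substance is the converse (b)$\Rightarrow$(a), which I would prove by running through the classification of irreducible varieties of dimension $\leqslant 2$ and verifying the Jordan property in every case but the excluded one.

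First I would dispose of curves. Here (b) holds vacuously, since a curve is never birational to the surface ${\bf P}^1\times B$, so I need only show ${\rm Bir}(X)$ is always Jordan. Passing to the smooth projective model gives ${\rm Bir}(X)={\rm Aut}(X)$, and a split by genus finishes the matter: genus $0$ gives ${\bf PGL}_2(k)$, Jordan by Theorem \ref{lag}; genus $1$ gives an extension of a finite group by the abelian group $X$, Jordan by Lemma \ref{JJJJ}; genus $\geqslant 2$ gives a finite group. For surfaces I would pass to a smooth projective minimal model and stratify. If $X$ is of general type, ${\rm Bir}(X)$ is finite by Matsumura's theorem. If $X$ is rational, ${\rm Bir}(X)$ is the plane Cremona group, Jordan by \cite[Theorem 5.3]{Se1} and \cite[Th\'eor\`eme 3.1]{Se2}. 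If $X$ is ruled but nonrational, I take $X={\bf P}^1\times B$ with $g(B)>0$; the fibre-preserving subgroup ${\rm Bir}^{}_B(X)={\bf PGL}_2(k(B))$ is normal and Jordan by Theorem \ref{lag}, with ${\rm Bir}(X)/{\rm Bir}^{}_B(X)\cong{\rm Aut}(B)$. When $g(B)\geqslant 2$, ${\rm Aut}(B)$ is finite and Lemma \ref{JJJJ} applies; the case $g(B)=1$ is precisely the excluded elliptic one.

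The main obstacle is the remaining surfaces, neither ruled nor of general type, whose canonical class is nef and for which ${\rm Bir}(X)={\rm Aut}(X)$. Here I would invoke the locally algebraic structure of ${\rm Aut}(X)$ due to Matsumura: the identity component ${\rm Aut}(X)^0$ is a connected algebraic group, and the quotient ${\rm Aut}(X)/{\rm Aut}(X)^0$ is at most countable. By the Barsotti--Rosenlicht structure theorem, ${\rm Aut}(X)^0$ carries a normal connected linear subgroup $L$ with abelian-variety quotient. The crucial step is that nontriviality of $L$ would force $X$ to be ruled, by Matsushima's theorem, contrary to assumption; hence $L$ is trivial and ${\rm Aut}(X)^0$ is an abelian variety, in particular an abelian, and so Jordan, group.

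It then remains to control the component group. I would let ${\rm Aut}(X)$ act on $H^2(X,{\bf Q})$ (assuming $k={\bf C}$): the action is trivial on ${\rm Aut}(X)^0$, so it factors through ${\rm Aut}(X)/{\rm Aut}(X)^0$, its kernel there is finite by \cite[Prop.\,1]{Do}, and its image preserves the integral lattice, hence lies in some ${\bf GL}_n({\bf Z})$ and is bounded by Example \ref{bbbb}(2). Combining these via Example \ref{bbbb}(1),(3) shows ${\rm Aut}(X)/{\rm Aut}(X)^0$ is bounded, and Lemma \ref{JJJJ} then upgrades the Jordan property of ${\rm Aut}(X)^0$ to all of ${\rm Aut}(X)$. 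I expect the genuine difficulty to lie in marshalling these structural inputs—Matsumura, Barsotti--Rosenlicht, Matsushima, and Dolgachev—into the precise normal-subgroup/bounded-quotient shape that Lemma \ref{JJJJ} requires, and, on the other side, in appealing to \cite{Z} to confirm that the single elliptic family is truly non-Jordan, so that condition (b) is sharp.
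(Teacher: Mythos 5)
Your proposal is correct and follows essentially the same route as the paper: the same genus split for curves, the same stratification of surfaces (general type via Matsumura's finiteness, rational via Serre's results on the Cremona group, nonrational ruled via ${\rm Bir}_B(X)={\bf PGL}_2(k(B))$ with Lemma \ref{JJJJ}, and Zarhin \cite{Z} for sharpness in the elliptic case), and the identical treatment of non-ruled surfaces with nef canonical class using the locally algebraic structure of ${\rm Aut}(X)$, the Barsotti--Rosenlicht theorem, the ruledness criterion, Dolgachev's finiteness, and boundedness of ${\bf GL}_n({\bf Z})$. The only slips are attributions: the locally algebraic structure is cited in the paper to Matsusaka \cite{Mat}, and the criterion that a nontrivial linear part $L$ forces ruledness is Matsumura's \cite[Cor.\,1]{Matm}, not ``Matsushima's theorem.''
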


\bibliographystyle{amsalpha}

\end{document}